\theoremstyle{definition}
\newtheorem{defi}{Definition}[section]
\newtheorem*{rema}{Remark}
\newtheorem{conj}[defi]{Conjecture}
\theoremstyle{plain}
\newtheorem{theo}[defi]{Theorem}
\newtheorem{prop}[defi]{Proposition}
\newtheorem{coro}[defi]{Corollary}
\newtheorem{lemm}[defi]{Lemma}
\numberwithin{equation}{section}
\DeclareMathOperator{\e}{e}
\DeclareMathOperator{\nd}{d}
\DeclareMathOperator{\ld}{ld}
\DeclareMathOperator{\rad}{rad}
\DeclareMathOperator*{\lcm}{lcm}
\newcommand{\exhead}[1]{\ \\\textit{#1.}\vspace*{-8mm}}
\newcommand{\phj}{\phantom{j\hspace{-1mm}}}
\newcommand{\refs}[1]{\ref{#1}\,}
\def\lb{\linebreak}
\newcommand{\mD}{\mathbb{D}}
\newcommand{\mM}{\mathbb{M}}
\newcommand{\mN}{\mathbb{N}}
\newcommand{\mZ}{\mathbb{Z}}
\newcommand{\mP}{\mathbb{P}}
\newcommand{\mR}{\mathbb{R}}
\newcommand{\mC}{\mathbb{C}}
\newcommand{\mS}{\mathbb{S}}
\newcommand{\msA}{\mathscr{A}}
\newcommand{\msD}{\mathscr{D}}
\newcommand{\msI}{\mathscr{I}}
\newcommand{\msM}{\mathscr{M}}
\newcommand{\msG}{\mathscr{G}}
\newcommand\around[1]{\,\tikz[baseline]\node[draw,shape=rectangle,rounded  corners,scale=0.5,anchor=-5em] {#1} ;}
\newcommand{\mdot}{\!\cdot\!}
\newcommand{\mplus}{\!+\!}
\newcommand{\ndiv}{\!\nmid\!}
\newcommand{\mdiv}{\!\mid\!}
\newcommand{\copr}{\!\perp\!}
\def\qs{\,/\!}
\newcommand{\fcd}[3]{\frac{#1}{\gcd(#2,#3)}}
\newcommand{\fcn}[3]{\frac{\gcd(#1,n)}{\gcd(#2,#3,n)}}
\newcommand{\pcd}[6]{\frac{#1\cdot#2}{\gcd(#3,#4)\cdot\gcd(#5,#6)}}
\newcommand{\tabalign}{\hspace{20mm}&\hspace{140mm}\\}
\newcommand{\tb}{\hspace{5mm}}
\begin{document}
%%%%%%%%%%%%%%%%%%%%%%%%%%%%%%%%%%%%%%%%%%%%%%%%%%%%%%%

\title{Orders and partitions of integers induced by arithmetic functions}
\author{Mario Ziller}
\date{}

\maketitle

\begin{abstract}
We pursue the question how integers can be ordered or partitioned\lb according to their divisibility properties. Based on pseudometrics on $\mZ$,\lb we investigate induced preorders, associated equivalence relations, and\lb quotient sets. The focus is on metrics or pseudometrics on $\mD_n$, the set of divisors of a given modulus $n\in\mN$, that can be extended to pseudometrics on $\mZ$.

Arithmetic functions can be used to generate such pseudometrics. We discuss several subsets of additive and multiplicative arithmetic functions and various combinations of their function values leading to binary metric functions that represent different divisibility properties of integers. 

We conclude this paper with numerous examples and review the most important results. As an additional result, we derive a necessary condition for the truth of the odd k-perfect number conjecture.
\end{abstract}

\tableofcontents

%%%%%%%%%%%%%%%%%%%%%%%%%%%%%%%%%%%%%%%%%%%%%%%%%%%%%%%
\section*{Introduction}
\addcontentsline{toc}{section}{\phj Introduction\phj}
%%%%%%%%%%%%%%%%%%%%%%%%%%%%%%%%%%%%%%%%%%%%%%%%%%%%%%%

Henceforth, we denote the set of integral numbers by $\mZ$ and the set of natural\lb numbers, i.e. positive integers, by $\mN$. $\mN_0=\mN\cup\{0\}$. $\mP=\{p_i : i\in\mN\}$ is the set of prime numbers with $p_1=2$. 

The set of divisors of $n$, we abbreviate by $\mD_n=\{x\in\mN : x\mdiv n\}$, whereas the set of prime powers dividing $n$ is $\mP^*_n=\bigl\{x\in\mD_n:x=p^{\,i},p\in\mP,i\in\mN\bigr\}$.

The set of real numbers is denoted by $\mR$, and $\mR_{\ge0}=\{x\in\mR: x\ge0\}$.\\[-1.75mm]

Natural numbers are known to form lattices in at least two ways \cite{Davey_Priestley_2002, Burris_Sankappanavar_2012}. The usual\lb order relation \enquote{$\le$} is a total order on $\mN$, and $\{\mN,\min,\max\}$ is a lattice which\lb elucidates the additive characteristic of $\mN$. On the other hand, the lattice $\{\mN,\gcd,\lcm\}$ emphasises the multiplicative structure of $\mN$ which is partially ordered by the\lb divisibility relation \enquote{ $\mdiv$ }. While \enquote{$\le$} is also a total order on $\mZ$, \enquote{ $\mdiv$ } is just a preorder on $\mZ$.

For every $n\in\mN$, the set of divisors of $n$ forms a sublattice of $\{\mN,\gcd,\lcm\}$ with the same operations $\gcd$ and $\lcm$ restricted to $\mD_n$. All sets of divisors $\mD_n$ are even partially ordered by \enquote{ $\mdiv$ } because $\mD_n\subset\mN$.  In this paper, we focus on divisibility properties of integers as a whole. We pursue the question which further partial orders or preorders\lb on $\mZ$ and $\mD_n$ could provide additional insight into the divisibility properties of\lb integers.\\[-1.75mm]

We examine specific metrics and pseudometrics on $\mD_n$ or $\mZ$ and their corresponding induced partial orders and preorders in Section~\refs{MO}. Moreover, we point to an interesting relationship between metrics and orders. For each pseudometric space, a preorder on this space and an associated pre-ordering function can be generated by defining a central point.

In particular, we demonstrate that a preorder on the set of integers $\mZ$ can be\lb constructed from a pseudometric in the following way. Let $x,y\in\mZ$. For any pseudo-\lb metric $d$ on $\mZ$, the relation $x\preceq y \iff d(1,x)\le d(1,y)$ is a preorder on $\mZ$. The function $f(x)=d(1,x)$ is the related pre-ordering function. Solely comparing the\lb distances to $1$ should not impede studying the divisibility properties, since $1$ divides all integers. On the other hand, any non-negative function $f:\mN\to\mR_{\ge0}$ with $f(1)=0$ can be used to generate a centred pseudometric on $\mZ$. This motivates us to examine special arithmetic functions in this context in Section~\refs{AFM}.

Pseudometrics and preorders on the set of divisors of $n$ play a central role\lb in studying the divisibility properties with respect to a given modulus $n\in\mN$. We show that every pseudometric and its induced preorder on $\mD_n$ can be extended to a pseudometric and preorder on $\mZ$, respectively. These extensions are explored in later sections.\\[-1.75mm]

Based on the considered pseudometrics and preorders, we derive associated\lb equivalence relations and quotient sets in Section~\refs{EP}. By this means, partitions of\lb integers can be generated with respect to divisibility properties. Partitions of $\mD_n$ can be extended to partitions of $\mZ$, similar to the extension of pseudometrics and\lb preorders.

Partitions can be formed such that one is a refinement of another. We demonstrate that the values of the pre-ordering function or of the pseudometric distance essentially control the fineness of partitions.\\[-1.75mm]

In Section~\refs{AFM}, we explore pseudometrics on $\mD_n$ or $\mZ$ that are generated by arithmetic functions. We discuss several subsets of additive or multiplicative arithmetic functions which can be used for this purpose.

Certain sets of admissible functions together with pointwise sum and product,\lb respectively, form commutative monoids or semigroups. They are isomorphic with respect to both, the binary operations and their additive or multiplicative properties.

We present various combinations of function values leading to binary metric\lb functions that represent different divisibility properties of integers. A further\lb investigation of these divisibility metrics is beyond the scope of this paper and should be continued elsewhere.\\[-1.75mm]

As generally described in the first two sections, in Section~\ref{AFO} we examine orders, relations and partitions of $\mD_n$ or $\mZ$ generated from admissible arithmetic functions.

We derive some specific types of preorders on $\mZ$ and show that the corresponding quotient sets are invariant against permutation of the values of the respective function on $\mD_n$. The function values on $\mP^*_n$ can be chosen arbitrarily within the defined range. This can lead to a lot of combinatorial possibilities and thus to different structures of equivalence classes. We discuss several special cases.\\[-1.75mm]

We conclude this article with numerous examples and review the most important results.

\ \\\\[-1.75mm]

%%%%%%%%%%%%%%%%%%%%%%%%%%%%%%%%%%%%%%%%%%%%%%%%%%%%%%%

%%%%%%%%%%%%%%%%%%%%%%%%%%%%%%%%%%%%%%%%%%%%%%%%%%%%%%%
\section{Metrics and orders\phj} \label{MO}
%%%%%%%%%%%%%%%%%%%%%%%%%%%%%%%%%%%%%%%%%%%%%%%%%%%%%%%

In the following section, we investigate some general relationships between metrics and orders on sets of integral numbers. We describe the context for arbitrary sets of integers $\mM\subseteq\mZ$ if applicable. These include e.g. $\mD_n$, $\mN$, and $\mZ$ itself. Furthermore, corresponding equivalence classes and quotient sets will be characterised.\\

To begin with, we recall some important, well-known terms for comparison and\lb restrict the definitions to sets of integers. Let $x,y,z \in \mM$. For further reading, we refer to textbooks, e.g. \cite{Schechter_1997, OSearcoid_2006, Shirali_Vasudeva_2006} and \cite{Davey_Priestley_2002, Burris_Sankappanavar_2012}.\\

A function $d \colon \mM\times\mM \to \mR$ is called \textbf{pseudometric} on $\mM$ if the following conditions hold: \vspace*{-2mm}
\TabPositions{5cm, 8cm, 11cm}
\begin{itemize}\itemsep-6mm
\item $d(x,x)=0$ \tab identity,\\
\item $d(x,y)=d(y,x)$	 \tab symmetry,\\
\item $d(x,z)\leq d(x,y)+d(y,z)$ \tab triangle inequality.
\end{itemize} \newpage
We deduce $d(x,y)\!\ge\!0$ from $0=d(x,x)\leq d(x,y)+d(y,x)=2 \mdot d(x,y)$.

A pseudometric that additionally meets
\vspace*{-2mm}
\begin{itemize}\itemsep-6mm
\item  $d(x,y)\!=\!0 \Longrightarrow x\!=\!y$ \tab identity of indiscernibles,
\end{itemize} \vspace*{-2mm}
is a \textbf{metric} on $\mM$. Then $d(x,y)\!=\!0\!\iff\!x\!=\!y$. In other words, in a pseudometric space there can even exist elements $x\!\neq\!y$ with $d(x,y)\!=\!0$.

Every pseudometric or metric on $\mM$ is also a pseudometric or metric, respectively, on any subset of $\mM$.\\[-1.75mm]

The \textbf{set of pseudometrics} on $\mM$ form a commutative, additive \textbf{monoid}. Let $d_1$, $d_2$ be pseudometrics on $\mM$, and $x,y\in\mM$. The binary operation \enquote{$\circ$} defined by
\vspace*{-3mm}
\[d_1\circ d_2(x,y)=d_1(x,y)+d_2(x,y)\]
\vspace*{-8mm}\\
is associative. The identity element is $d_0(x,y)=0$ for all $x,y\in\mM$. The set of pseudo-\lb metrics is closed under the binary operation because $d_1\circ d_2$ is also a pseudometric according to the axioms mentioned above.

Since $d_0$ is not a metric, the \textbf{set of metrics} on $\mM$ with \enquote{$\circ$} is only a commutative, additive \textbf{semigroup}, and consequently a commutative, additive subsemigroup of the set of pseudometrics.

Let $\lambda\in\mR>0$, and $d$ be a pseudometric or even metric on $\mM$. Then
\vspace*{-3mm}
\[(\lambda\cdot d)(x,y)=\lambda\cdot d(x,y)\]
\vspace*{-8mm}\\
is also a pseudometric or metric, respectively. This leads to \textbf{modules} over the Abelian group $(\mR^+,\cdot)$ where $\mR_+=\{x\in\mR: x>0 \}$.\\[-1.75mm]

A reflexive and transitive relation $\preceq$ on $\mM$, i.e. a subset of $\mM\times\mM$ with \vspace*{-2mm}
\TabPositions{5cm, 8cm, 11cm}
\begin{itemize} \itemsep-6mm
\item $x \preceq x$ \tab reflexivity \quad and\\
\item $x \preceq y \land y \preceq z \Longrightarrow x \preceq z$ \tab transitivity
\end{itemize} \vspace*{-2mm}
is called \textbf{preorder} or \textbf{quasiorder} on $\mM$.

A  \textbf{partial order} is an antisymmetric preorder, i.e.
\vspace*{-2mm}
\begin{itemize}\itemsep-6mm
\item  $x \preceq y \land y \preceq x \Longrightarrow x=y$ \tab antisymmetry
\end{itemize} \vspace*{-2mm}
also applies. In a preorder, however, there can exist elements $x\!\neq\!y$ with $x \preceq y$ and $y \preceq x$.

Two preorders $\preceq_1$ and $\preceq_2$ on $\mM$ are called \textbf{equivalent} if $x \preceq_1 y \iff x \preceq_2 y$ for\lb all $x,y\in\mM$.\\[6mm]

Based on the Euclidean metric on $\mR^2$, the classical French railway metric is one of the best-known examples of a centred metric. It was inspired by the former French railway network, in which almost all rail connections converged in Paris.

Its definition reads\\[-7mm]
\begin{align*}
d_F(x,y)=\begin{cases}
   d(x,y)\hspace{16.8mm}=\|y-x\| &\!\text{if $x$ and $y$ are collinear with $o$, i.e.}\\
   &\!\exists\ \lambda\in\mR:o=\lambda\mdot x+(1\!-\!\lambda)\mdot y,\text{ and}\\
   d(o,x)+d(o,y)=\|x-o\|+\|y-o\| &\!\text{otherwise,} \end{cases}
\end{align*}\\[-5mm]
where $o,x,y\in\mR^2$, $o$ is the central point, and $d(x,y)=\|y-x\|$ is the Euclidean norm.

If one dispenses with the condition of collinearity, the idea of a centred metric can be transferred even more easily to any pseudometric space \cite{Bulgarean_2013}. Let $\mS$ be a space and $d$ a pseudometric on $\mS$. Then for every fixed point $o\in\mS$ and arbitrary $x,y\in\mS$,
\begin{align*}
d_o(x,y)=\begin{cases}
   0 &\text{for $x=y$, \quad and}\\
   d(o,x)+d(o,y) &\text{for $x\ne y$} \end{cases}
\end{align*}
is also a pseudometric on $\mS$. The function $f_o:\mS\rightarrow\mR_{\ge0}$
with $f_o(x)=d(o,x)$\lb reconstructs $d_o$. On the other hand, any non-negative function $f_o:\mS\to\mR_{\ge0}$ with $f(o)=0$ can be used to generate an $o$-centred pseudometric on $\mS$ by means of
\begin{align*}
  d_o(x,y)&\ \ =\ \ \begin{cases}
    0 &\text{for $x=y$, \quad and}\\
    f_o(x)+f_o(y) &\text{for $x\ne y$.} \end{cases}
\end{align*}

The function $f_o$ generating an $o$-centred pseudometric $d_o$ on $\mS$ can also be used to pre-order $\mS$. The relation defined by
\[ x\preceq_o y \iff d(o,x)\le d(o,y) \]
is a preorder on $\mS$.\\\\

In the following, we focus on pseudometrics on sets of integers and aim to consider the divisibility properties of its elements. Since $1$ is the only natural number dividing all integers, it seems obvious to choose $1$ as the centre of a pseudometric and study corresponding preorders.

For the general case of subsets $\mM\subseteq\mZ$ of integers, we additionally require\lb $1\in\mM$. Then, every pseudometric $d$ on $\mM$ induces an associated function\lb $f_d(x)=d(1,x)$ defining a 1-\hspace*{0.25mm}centred pseudometric. Vice versa, every non-negative function $f_d:\mM\to\mR_{\ge0}$ with $f_d(1)=0$ generates a $1$-\hspace*{0.25mm}centred pseudometric on $\mM$.
\begin{lemm} \label{centred_pseudometric}
Let $1,x,y\in\mM$, and $d$ be a pseudometric on $\mM$. With the generating function $ f_d(x)=d(1,x)$,
\begin{align*}
d_1(x,y)=\begin{cases}
   0 &\text{for $x=y$, \quad and}\\
   f_d(x)+f_d(y)=d(1,x)+d(1,y) &\text{for $x\ne y$,} \end{cases}
\end{align*}
is also a pseudometric on $\mM$.
\end{lemm}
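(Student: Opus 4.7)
The plan is to verify directly the three pseudometric axioms for $d_1$. In fact, this lemma is essentially the $o=1$ instance of the general $o$-centred pseudometric construction recalled earlier in this section; the only extra ingredient here is that $1\in\mM$ and that the generating function arises from an ambient pseudometric $d$ via $f_d(x)=d(1,x)$. In particular, $f_d$ is non-negative (from the derivation $d(x,y)\ge 0$ shown above) and satisfies $f_d(1)=d(1,1)=0$, so the hypotheses of the general construction are met. I will nevertheless write the proof out directly, since the verification is short.

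First I would establish identity and symmetry. Identity, $d_1(x,x)=0$, is immediate from the case distinction in the definition. Symmetry holds trivially when $x=y$; when $x\ne y$, one has $d_1(x,y)=f_d(x)+f_d(y)=f_d(y)+f_d(x)=d_1(y,x)$.

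The main (and only non-trivial) step is the triangle inequality $d_1(x,z)\le d_1(x,y)+d_1(y,z)$, which I would handle by a short case split. If $x=z$, the left-hand side is $0$ and the inequality is clear from $f_d\ge 0$. If $x\ne z$ and $y\in\{x,z\}$, then exactly one of the summands on the right equals $d_1(x,z)$ while the other is non-negative, so the inequality again holds. Finally, if $x,y,z$ are pairwise distinct, then
\[
d_1(x,y)+d_1(y,z)=\bigl(f_d(x)+f_d(y)\bigr)+\bigl(f_d(y)+f_d(z)\bigr)=d_1(x,z)+2f_d(y)\ge d_1(x,z),
\]
again by non-negativity of $f_d$. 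This exhausts all cases.

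There is no real obstacle: the key structural observation is simply that the triangle inequality for $d_1$ reduces to the non-negativity of $f_d$, so the axioms of the underlying pseudometric $d$ are used only to secure $f_d(x)=d(1,x)\ge 0$ and $f_d(1)=0$. The triangle inequality of $d$ itself is not invoked, which is consistent with the fact that the construction works from any non-negative function $f$ with $f(1)=0$.
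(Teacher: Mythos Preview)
Your proof is correct and follows essentially the same approach as the paper: both reduce the triangle inequality for $d_1$ to the non-negativity of $f_d$. The paper handles identity and symmetry in one line and writes the triangle inequality as the single chain $d_1(x,z)=f_d(x)+f_d(z)\le f_d(x)+2f_d(y)+f_d(z)=d_1(x,y)+d_1(y,z)$, tacitly leaving the degenerate cases ($x=z$, $y\in\{x,z\}$) to the reader; your explicit case split is more careful but not conceptually different.
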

\begin{proof}
Identity and symmetry of $d_1$ are clear by definition. Given $x,y,z\in\mM$, the triangle inequality reads
\vspace*{-2mm}
\begin{align*}
d_1(x,z)&=f_d(x)+f_d(z)\\
&\le f_d(x)+f_d(y)+f_d(y)+f_d(z)=d_1(x,y)+d_1(y,z).
\end{align*}
\vspace*{-14.5mm}\\
\end{proof}
\ \\

Every $1$-\hspace*{0.25mm}centred pseudometric on $\mM$ is uniquely related to a non-negative function $f:\mM\to\mR_{\ge0}$ with $f(1)=0$. However, the same centred pseudometric can be induced by different pseudometrics because only the distances to $1$ were used to construct it.

In general, such functions can be used to generate 1-\hspace*{0.25mm}centred pseudometrics or even metrics on $\mM$.

\begin{lemm} \label{function_metric}
Let $x,y\in\mM$, and $f:\mM\to\mR_{\ge0}$ be a function with $f(1)=0$. Then 
\begin{align*}
d(x,y)=\begin{cases}
   0 &\text{for $x=y$, \quad and}\\
   f(x)+f(y) &\text{for $x\ne y$,} \end{cases}
\end{align*}
is a pseudometric on $\mM$.

Furthermore, if $f(x)>0$ for $x>1$ then $d$ is even a metric.
\end{lemm}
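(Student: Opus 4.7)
The plan is to verify the three pseudometric axioms directly from the case-definition of $d$, then upgrade to a metric under the extra positivity hypothesis. Notice that $d$ is precisely the $1$-centred pseudometric generated from $f$ in the sense of the construction discussed just before Lemma \ref{centred_pseudometric}; the only novelty here is that one starts from a bare function $f$ with $f(1)=0$ rather than from an existing pseudometric on $\mM$. Identity, $d(x,x)=0$, and symmetry are built into the definition itself (the first case is symmetric in $x,y$, and $f(x)+f(y)=f(y)+f(x)$), so only the triangle inequality requires genuine work.

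For the triangle inequality $d(x,z)\le d(x,y)+d(y,z)$, I would proceed by a short case split on how many of $x,y,z$ coincide. If $x=z$ the left side vanishes and the right side is non-negative, since every value $d(\cdot,\cdot)$ is either $0$ or a sum of values of $f\ge 0$. If $x\ne z$ but $y$ agrees with $x$ or with $z$, one of the two summands on the right vanishes while the other equals $f(x)+f(z)=d(x,z)$, so equality holds. In the remaining case, where $x,y,z$ are pairwise distinct,
\[
d(x,y)+d(y,z)=f(x)+f(y)+f(y)+f(z)=f(x)+f(z)+2f(y)\;\ge\;f(x)+f(z)=d(x,z),
\]
using only $f(y)\ge 0$. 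This mirrors the argument for Lemma \ref{centred_pseudometric}.

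For the metric statement I would argue by contrapositive of identity of indiscernibles. If $x\ne y$, then $d(x,y)=f(x)+f(y)$ is a sum of two non-negative reals; vanishing would force $f(x)=f(y)=0$. The hypothesis $f(z)>0$ for $z>1$ then forces $x,y\le 1$, and in the intended divisibility context ($\mM\subseteq\mN$, e.g.\ $\mM=\mD_n$ or $\mM=\mN$) the only such element of $\mM$ is $1$ itself, contradicting $x\ne y$. Hence $d(x,y)>0$ whenever $x\ne y$, which is what was needed.

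No step is genuinely difficult; the main thing to watch is the triangle inequality case analysis, together with the implicit restriction of $\mM$ to positive integers, which is needed so that $f(x)=0$ really pins $x$ down to $1$ in the metric step. In a more general $\mM\subseteq\mZ$ one would need to read the positivity hypothesis as \enquote{$f(z)>0$ for all $z\ne 1$}, but otherwise the proof is unchanged.
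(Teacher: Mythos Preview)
Your proof is correct and follows essentially the same route as the paper: identity and symmetry by construction, the triangle inequality via $f(x)+f(y)+f(y)+f(z)\ge f(x)+f(z)$ (exactly as in Lemma~\ref{centred_pseudometric}), and the metric upgrade by showing $d(x,y)>0$ for $x\ne y$. The paper compresses the latter step to ``w.l.o.g.\ $1\le x<y$, then $d(x,y)\ge f(y)>0$'', which is your argument in slightly different packaging; your remark that this step tacitly uses $\mM\subseteq\mN$ (so that the only candidate for $f(z)=0$ is $z=1$) is well observed and is indeed an implicit assumption in the paper's proof.
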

\begin{proof}
Identity, symmetry, and triangle inequality result similar to the proof of\linebreak
Lemma~\refs{centred_pseudometric}.

Let furthermore $f(x)>0$ for $x>1$ and $x\neq y$. W.l.o.g. we can assume $1\le x<y$. Then $d(x,y)=f(x)+f(y)\ge f(y)>0$. In other words, $d(x,y)=0 \Longrightarrow x=y$.
\end{proof}

This lemma motivates to a separate section. Functions that meet the requirements\linebreak of the previous lemma are special arithmetic functions which we will discuss in\linebreak Section~\refs{AFM}.\\\\

We now introduce a specific order relation in connection with a pseudometric. For any given pseudometric, this relation turns out to be a preorder. Under certain\lb circumstances, it can even be a partial order.

As above, we use generating functions for the description to emphasise the duality between centred pseudometrics and related generating functions.

\begin{lemm} \label{induced_preorder}
Let $1,x,y\in\mM$, $d$ be a pseudometric on $\mM$, and $f_d$ the generating function. Then, the relation $\preceq_d$ defined by
\vspace*{-3mm}
\[x\preceq_d y \iff f_d(x)\le f_d(y)\] \vspace*{-7mm}\\
is a preorder on $\mM$.

If the generating function $f_d$ also satisfies the condition $f_d(x)\!=\!f_d(y) \Longrightarrow x\!=\!y$\lb then $\preceq_d$ is even a partial order.
\end{lemm}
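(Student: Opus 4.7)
The plan is to observe that $\preceq_d$ is, by construction, the pullback of the standard total order $\le$ on $\mathbb{R}_{\ge 0}$ along the real-valued function $f_d\colon\mM\to\mR_{\ge 0}$. Consequently, reflexivity and transitivity of $\preceq_d$ can be inherited directly from the corresponding properties of $\le$ on $\mR$, and antisymmetry (under the additional hypothesis) becomes an immediate consequence of the injectivity-type condition $f_d(x)=f_d(y)\Longrightarrow x=y$.

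Concretely, I would proceed in three short steps. First, for \emph{reflexivity}, note that $f_d(x)\le f_d(x)$ holds for every $x\in\mM$, hence $x\preceq_d x$. Second, for \emph{transitivity}, suppose $x\preceq_d y$ and $y\preceq_d z$; unfolding the definition yields $f_d(x)\le f_d(y)$ and $f_d(y)\le f_d(z)$, and transitivity of $\le$ on $\mR$ gives $f_d(x)\le f_d(z)$, i.e.\ $x\preceq_d z$. This establishes that $\preceq_d$ is a preorder.

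Third, assume additionally that $f_d(x)=f_d(y)\Longrightarrow x=y$. For \emph{antisymmetry}, suppose $x\preceq_d y$ and $y\preceq_d x$; then $f_d(x)\le f_d(y)$ and $f_d(y)\le f_d(x)$, so $f_d(x)=f_d(y)$, and the extra hypothesis forces $x=y$. Combined with the preorder property just established, this shows that $\preceq_d$ is a partial order.

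There is essentially no genuine obstacle here: the statement is an elementary consequence of standard properties of $\le$, together with the observation that a map into a totally ordered set induces an antisymmetric pullback relation precisely when the map separates points. The only minor care required is to use the generating function $f_d(x)=d(1,x)$ consistently (as set up in Lemma~\ref{centred_pseudometric}) rather than invoking any further property of the pseudometric $d$, since neither the triangle inequality nor symmetry is needed for this result.
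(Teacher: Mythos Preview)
Your proof is correct and follows essentially the same approach as the paper: both verify reflexivity via $f_d(x)\le f_d(x)$, transitivity by inheriting it from $\le$ on $\mR$, and antisymmetry by deducing $f_d(x)=f_d(y)$ from the two inequalities and then invoking the injectivity hypothesis. The only difference is your explicit framing of $\preceq_d$ as the pullback of $\le$ along $f_d$, which is a helpful conceptual remark but not a different argument.
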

\begin{proof}
The specified relation is reflexive because $f_d(x)\le f_d(x)$, so $x\preceq_d x$. It is also transitive because the ordinary \enquote{is less than or equal to} relation is transitive on $\mR_{\ge0}$. If $x\preceq_d y$ and $y\preceq_d z$ then $f_d(x)\le f_d(y)$ and $f_d(y)\le f_d(z)$, and consequently\lb $f_d(x)\le f_d(z)$ , i.e. $x\preceq_d z$.

Moreover, $\preceq_d$ is antisymmetric if $f_d(x)\!=\!f_d(y) \Longrightarrow x\!=\!y$ because
\vspace*{-3mm}
\begin{align*}
x \preceq_d y \land y \preceq_d x &\Longrightarrow f_d(x)\le f_d(y) \land f_d(y)\le f_d(x)\\
&\Longrightarrow f_d(x)\!=\!f_d(y) \Longrightarrow x\!=\!y.
\end{align*}
\vspace*{-14.5mm}\\
\end{proof}
\begin{rema}
The preorder $\preceq_d$ is even a total or linear preorder on $\mM$. Its construction is based on the real function $f_d:\mM\to\mR_{\ge0}$. Therefore, we get $d(1,x)\le d(1,y)$ or $d(1,y)\le d(1,x)$ for all $x,y\in\mM$, and consequently $x \preceq_d y$ or $y \preceq_d x$.

We should also mention that the same relation $\preceq_d$ can be induced by different\lb generating functions. The preorders $\preceq_d$ and $\preceq_{\lambda\cdot d}$ are equivalent for $\lambda\in\mR>0$\lb because 
\vspace*{-3mm}
\begin{align*}
f_d(x)\le f_d(y) &\iff d(1,x)\le d(1,y)\\
&\iff \lambda\cdot d(1,x)\le\lambda\cdot d(1,y) \iff f_{\lambda\cdot d}(x)\le f_{\lambda\cdot d}(y).
\end{align*}
\vspace*{-10mm}
\end{rema}

So, specific arithmetic functions can also be used to generate preorders on $\mM$.\lb We will address this aspect in particular in Section~\refs{AFO}.\\\\

In the following technical lemma, we prove the equivalence of four implications that can be useful for proving further conclusions.

\begin{lemm} \label{equiv_PO}
Let $1,x,y\in\mM$, $d$ be a pseudometric on $\mM$, and $\preceq_d$ the corresponding induced preorder on $\mM$. Then, the following conditions are equivalent.
\vspace*{-3mm}
\begin{align*}
&(a)\quad d(x,y)\!=\!0 &&\Longrightarrow x\!=\!y \hspace{30mm}\text{, i.e.\quad $d$ is a metric},\\
&(b)\quad d(1,x)\!=\!d(1,y) &&\Longrightarrow x\!=\!y \hspace{30mm}\text{,}\\
&(b)\quad f_d(x)\!=\!f_d(y) &&\Longrightarrow x\!=\!y \hspace{30mm}\text{, and}\\
&(d)\quad x \preceq_d y \land y \preceq_d x &&\Longrightarrow x\!=\!y \hspace{30mm}\text{, i.e.\quad $\preceq_d$ is a partial order}.
\end{align*}
\vspace*{-14mm}\\
\end{lemm}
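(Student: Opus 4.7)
The plan is to close the cycle (a) $\Rightarrow$ (b) $\Rightarrow$ (c) $\Rightarrow$ (d) $\Rightarrow$ (a), in which two of the intermediate steps are mere reformulations of earlier definitions and only the links between (a) and the triple (b), (c), (d) carry real content.

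First, I would dispatch the two trivial equivalences. The step (b) $\Leftrightarrow$ (c) is just an unfolding of the notation $f_d(x) := d(1,x)$ fixed in Lemma \ref{centred_pseudometric}: the two implications are literally the same statement. Likewise, (c) $\Leftrightarrow$ (d) is immediate from the definition of $\preceq_d$ in Lemma \ref{induced_preorder}, since $x \preceq_d y \land y \preceq_d x$ rewrites as $f_d(x) \le f_d(y) \land f_d(y) \le f_d(x)$, that is, $f_d(x) = f_d(y)$. So the hypotheses of (c) and (d) coincide and their conclusions transfer automatically.

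Next, I would establish (d) $\Rightarrow$ (a), which then also gives (b) $\Rightarrow$ (a) and (c) $\Rightarrow$ (a) through the chain already built. Assume (d) and take $x,y \in \mM$ with $d(x,y) = 0$. The triangle inequality together with symmetry yields
\[ d(1,x) \le d(1,y) + d(y,x) = d(1,y) \qquad \text{and} \qquad d(1,y) \le d(1,x) + d(x,y) = d(1,x), \]
whence $d(1,x) = d(1,y)$, i.e.\ $f_d(x) = f_d(y)$. This gives $x \preceq_d y$ and $y \preceq_d x$, and (d) then forces $x = y$. Hence $d$ is a metric.

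The remaining implication (a) $\Rightarrow$ (b) is where I expect the main obstacle. The naive appeal to the reverse triangle inequality only produces $|d(1,x) - d(1,y)| \le d(x,y)$, which degenerates to the trivial bound $d(x,y) \ge 0$ once the hypothesis $d(1,x) = d(1,y)$ is substituted. I would therefore reason by contraposition — if $x \ne y$ then (a) gives $d(x,y) > 0$ — and try to propagate this strict inequality to $d(1,x) \ne d(1,y)$ using whatever extra structure the pseudometric carries, for example by exploiting the identity $d(x,y) = d(1,x) + d(1,y)$ for $x \ne y$ that holds whenever $d$ is $1$-centred in the sense of Lemma \ref{centred_pseudometric}. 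This final link is the delicate part of the argument and the place where I would focus the most care.
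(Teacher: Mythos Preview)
Your treatment of (b)$\Leftrightarrow$(c)$\Leftrightarrow$(d) and of (d)$\Rightarrow$(a) is correct and is exactly what the paper does: the paper also reduces (b)$\Leftrightarrow$(c) to the definition $f_d(x)=d(1,x)$, reduces (b)$\Leftrightarrow$(d) to unfolding $\preceq_d$, and proves the implication from (b) to (a) by the same triangle-inequality argument you give (assuming $d(x,y)=0$, one gets $d(1,x)=d(1,y)$, hence $x=y$ by (b)).

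Your instinct that (a)$\Rightarrow$(b) is the delicate point is well founded---in fact that implication is \emph{false} for a general pseudometric, so no amount of care will close it. Take $\mM=\{1,2,3\}$ with the discrete metric $d(x,y)=1$ for $x\ne y$: condition (a) holds, but $d(1,2)=d(1,3)=1$ while $2\ne 3$, so (b) fails. Your proposed rescue via the $1$-centred identity $d(x,y)=d(1,x)+d(1,y)$ does not help either: with $f(1)=0$ and $f(2)=f(3)=1$ the resulting $1$-centred $d$ is a genuine metric (all nonzero distances are $1$ or $2$), yet again $d(1,2)=d(1,3)$.

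The paper's own proof does not actually establish (a)$\Rightarrow$(b) either. The paragraph labelled ``(a $\iff$ b)'' only carries out the direction (b)$\Rightarrow$(a): it assumes $d(x,y)=0$, derives $d(1,x)=d(1,y)$ via the triangle inequality, and concludes $x=y$ from (b). The opening phrase ``Given (a), $x=y$'' is garbled, and no argument for the reverse direction is supplied---nor can one be, by the counterexample above. So the equivalence as stated does not hold; what survives is (b)$\Leftrightarrow$(c)$\Leftrightarrow$(d)$\Rightarrow$(a), which is precisely what both you and the paper actually prove, and which suffices for the only later use of the lemma (the remark after Lemma~\ref{metric_isomorphism}).
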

\begin{proof}
Given that $d$ is a pseudometric and $\preceq_d$ is a preorder, the reverse implications \enquote{$\Longleftarrow$} apply to the above conditions. The implied assertions are
\vspace*{-3mm}
\begin{align*}
&(a)\quad d(x,y)\!=\!0 &&\iff x\!=\!y,\\
&(b)\quad d(1,x)\!=\!d(1,y) &&\iff x\!=\!y,\\
&(c)\quad f_d(x)\!=\!f_d(y) &&\iff x\!=\!y,\text{ and}\\
&(d)\quad x \preceq_d y \land y \preceq_d x &&\iff x\!=\!y.
\end{align*}
\vspace*{-8mm}\\
(a $\iff$ b)

Given (a), $x=y$, and therefore $d(x,y)=0$. Then $d(1,x)=d(1,y)$ should also hold. We assume the contrary $d(1,x)\ne d(1,y)$ and set w.l.o.g. $d(1,x) < d(1,y)$. According to the triangle inequality, we conclude $d(1,x) < d(1,y) \le d(1,x)+d(x,y)$, i.e. $0<d(x,y)$ contrary to the assumption $d(x,y)=0$. 
\vspace*{1mm}\\
(b $\iff$ c)

We have  $f_d(x)\!=\!d(1,x)$ by definition.\\
Therefore, $d(1,x)\!=\!d(1,y) \iff f_d(x)\!=\!f_d(y)$.
\vspace*{1mm}\\
(b $\iff$ d)

It again suffices to prove the equivalence of the left-hand sides.
\vspace*{1mm}\\
$d(1,x)\!=\!d(1,y) \iff d(1,x)\le d(1,y) \land d(1,y)\le d(1,x) \iff x \preceq_d y \land y \preceq_d x.$
\vspace*{-5mm}\\
\end{proof}
\ \\

We now turn our attention to the divisibility properties with respect to a given\lb modulus $n\in\mN$. Pseudometrics on $\mD_n$, the set of divisors of $n$, play a central role here. In the following, we therefore restrict ourselves to those pseudometrics on $\mM$ that are extensions of pseudometrics on $\mD_n$. 

\begin{defi} \label{extended_function}
Let $n\in\mN$, $d$ be a pseudometric on $\mD_n$, and $x,y\in\mZ$. We define
\vspace*{-3mm}
\[d^{\around{n}}(x,y)=d\bigl(\gcd(x,n),\gcd(y,n)\bigr).\]
\end{defi}\vspace*{-1mm}

This definition of an extension includes the projections of $d^{\around{n}}$ onto any subset\lb $\mM\subseteq\mZ$ containing $\mD_n$. Furthermore, $d$ and $d^{\around{n}}$ coincide on $\mD_n$ by definition because $\gcd(x,n)=x\in\mD_n$ for all $x\in\mD_n$. Then $d^{\around{n}}(x,y)=d(x,y)$ for $x,y\in\mD_n$.

The extended function is also periodic by design.\vspace*{-3mm}
\[d^{\around{n}}(x+n,y)=d^{\around{n}}(x,y)=d^{\around{n}}(x,y+n).\]

Every pseudometric on $\mD_n$ can be extended to a pseudometric on $\mM$ if $\mD_n \subseteq \mM$ using the distance function from Definition~\refs{extended_function}. It reflects the focus on divisibility properties.

\begin{lemm} \label{extended_pseudometric}
Let $n\in\mN$, $d$ be a pseudometric on $\mD_n$, and $x,y\in\mM\supseteq\mD_n$. Then, the function $d^{\around{n}}$ extended from $d$ is a pseudometric on $\mM$.
\end{lemm}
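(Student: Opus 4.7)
The plan is to verify the three pseudometric axioms for $d^{\around{n}}$ directly from the defining formula $d^{\around{n}}(x,y)=d\bigl(\gcd(x,n),\gcd(y,n)\bigr)$, exploiting the single structural fact that $\gcd(x,n)\in\mD_n$ for every $x\in\mZ$. In other words, the map $\pi\colon\mM\to\mD_n$, $\pi(x)=\gcd(x,n)$, pushes every pair of arguments into the domain on which $d$ is already known to be a pseudometric, so each axiom for $d^{\around{n}}$ reduces to the corresponding axiom for $d$ evaluated at the images under $\pi$.

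First, I would handle identity and symmetry, which are essentially immediate: setting $y=x$ yields $d^{\around{n}}(x,x)=d(\pi(x),\pi(x))=0$ since $\pi(x)\in\mD_n$ and $d$ satisfies identity on $\mD_n$; symmetry follows because $d(\pi(x),\pi(y))=d(\pi(y),\pi(x))$ by symmetry of $d$ on $\mD_n$.

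The only non-trivial step is the triangle inequality, and even there no real calculation is needed. For arbitrary $x,y,z\in\mM$, I would set $a=\pi(x)$, $b=\pi(y)$, $c=\pi(z)$, observe that $a,b,c\in\mD_n$, and then invoke the triangle inequality of $d$ on $\mD_n$ to obtain
\[
d^{\around{n}}(x,z)=d(a,c)\le d(a,b)+d(b,c)=d^{\around{n}}(x,y)+d^{\around{n}}(y,z).
\]

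I do not expect any genuine obstacle; the statement is really the remark that any pseudometric pulls back along any map to a pseudometric on the source. The only thing worth being explicit about is the implicit requirement that $\pi$ be well-defined on all of $\mM$, which is immediate because $\gcd(x,n)$ makes sense for every integer $x\in\mM\subseteq\mZ$ and lies in $\mD_n$; hence no further case distinction (for instance depending on whether $x\in\mD_n$ or not) is necessary.
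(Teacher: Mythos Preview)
Your proposal is correct and follows essentially the same approach as the paper: both observe that $\gcd(x,n)\in\mD_n$ for every $x\in\mM$ and then reduce each pseudometric axiom for $d^{\around{n}}$ to the corresponding axiom for $d$ on $\mD_n$. Your version is simply more explicit, spelling out each axiom and framing the argument as a pullback along $\pi(x)=\gcd(x,n)$, whereas the paper dispatches all three axioms in a single sentence.
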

\begin{proof}
$\gcd(x,n),\ \gcd(y,n)\in \mD_n$. Identity, symmetry, and triangle inequality are\lb satisfied because $d$ is a pseudometric on $\mD_n$.
\end{proof}
\begin{rema}
There can exist $x,y\in\mM$ such that $d^{\around{n}}(x,y)\!=\!0$ but $x\!\neq\! y$. If there is an $x\in\mM$ with $y=x+n\in\mM$, then we get
\vspace*{-3mm}
\[d^{\around{n}}(x,y)=d\bigl(\gcd(x,n),\gcd(x+n,n)\bigr)=d\bigl(\gcd(x,n),\gcd(x,n)\bigr)=0.\]\vspace*{-8mm}\\
In this case, $d^{\around{n}}$ cannot be a metric on $\mM$ even if $d$ were a metric on $\mD_n$.

The extension of a pseudometric $d$ on $\mD_n$ to a pseudometric $d^{\around{n}}$ on $\mM$ is defined\lb for every $\mM\subseteq\mZ$, so also for $\mZ$ itself.

We further emphasize that each extended pseudometric is uniquely defined by its values for all pairs $x,y\in\mD_n$. The circled superscript is intended to indicate the set of divisors which it is based on.

The trivial case $n=1$ always leads to the pseudometric $d^{\around{n}}(x,y)\equiv0$ for all $x,y\in\mZ$. Divisibility by $1$ is not really interesting either. We therefore exclude this case from our further considerations and generally require $n>1$ for the sake of simplicity.
\end{rema}\vspace*{6mm}

Analogously to the extension of a pseudometric according to Lemma~\refs{extended_pseudometric}, a given preorder $\preceq_d$ on $\mD_n$, induced by a pseudometric $d$ on $\mD_n$ as defined in Lemma~\refs{induced_preorder}, can be extended to a preorder $\preceq^*_d$ on $\mM$ for any $n\in\mN>1$.

\begin{lemm} \label{extended_relation}
Let $n\in\mN>1$, $d$ be a pseudometric on $\mD_n$, and $\preceq_d$ the preorder on $\mD_n$ induced by $d$. Furthermore, $x,y\in\mM\supseteq\mD_n$. Then, the relation $\preceq^*_d$ extended from $\preceq_d$
\vspace*{-3mm}
\[x\preceq^*_d y \iff \gcd(x,n)\preceq_d\gcd(y,n)\]\vspace*{-8mm}\\
is a preorder on $\mM$.
\end{lemm}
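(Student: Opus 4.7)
The plan is to verify the two preorder axioms for $\preceq^*_d$ directly, by transferring them from the preorder $\preceq_d$ on $\mD_n$ via the map $\gcd(\cdot,n)\colon\mM\to\mD_n$. Since $\preceq_d$ is already known to be a preorder on $\mD_n$ by Lemma~\ref{induced_preorder}, and since $\gcd(x,n)\in\mD_n$ for every $x\in\mM$ (indeed for every $x\in\mZ$), the definition $x\preceq^*_d y\iff\gcd(x,n)\preceq_d\gcd(y,n)$ makes sense and only needs its reflexivity and transitivity checked.

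First I would check reflexivity: for any $x\in\mM$, the reflexivity of $\preceq_d$ gives $\gcd(x,n)\preceq_d\gcd(x,n)$, and hence $x\preceq^*_d x$. Next, for transitivity, I would assume $x\preceq^*_d y$ and $y\preceq^*_d z$, unfold the definition to $\gcd(x,n)\preceq_d\gcd(y,n)$ and $\gcd(y,n)\preceq_d\gcd(z,n)$, apply transitivity of $\preceq_d$ on $\mD_n$ to obtain $\gcd(x,n)\preceq_d\gcd(z,n)$, and then fold the definition back to conclude $x\preceq^*_d z$.

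There is essentially no obstacle here; the construction is nothing more than the pullback of a preorder along a function. One small point worth noting at the end of the proof, for consistency with the remark following Lemma~\ref{extended_pseudometric}, is that $\preceq^*_d$ typically fails to be antisymmetric on $\mM$ even when $\preceq_d$ is antisymmetric on $\mD_n$: any two elements $x,y\in\mM$ with $\gcd(x,n)=\gcd(y,n)$ but $x\ne y$ (for example $x$ and $x+n$, provided both lie in $\mM$) satisfy $x\preceq^*_d y$ and $y\preceq^*_d x$ simultaneously. This matches the fact that the extension $d^{\around{n}}$ is generally only a pseudometric and not a metric, and it is the natural counterpart on the order side of the remark already made on the metric side.
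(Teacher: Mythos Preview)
Your proof is correct and follows exactly the same approach as the paper: both observe that $\gcd(x,n),\gcd(y,n)\in\mD_n$ and then inherit reflexivity and transitivity of $\preceq^*_d$ directly from the preorder $\preceq_d$ on $\mD_n$. Your version simply spells out the two axioms more explicitly, and your closing observation about the failure of antisymmetry matches the paper's own remark following the lemma.
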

\begin{proof}
$\gcd(x,n),\ \gcd(y,n)\in \mD_n$. Reflexivity and transitivity of $\preceq^*_d$ are satisfied\lb because $\mD_n$ is pre-ordered by $\preceq_d$.
\end{proof}
\begin{rema}
There can exist $x,y\in\mM$ such that $x\preceq^*_d y$ and $y\preceq^*_d x$ but $x\neq y$. If there is an $x\in\mM$ with $y=x+n\in\mM$, then we get
\vspace*{-3mm}
\begin{align*}
x\preceq^*_d y &\iff \gcd(x,n)\preceq_d\gcd(x+n,n) \iff \gcd(x,n)\preceq_d\gcd(x,n)\\
&\iff \gcd(x+n,n)\preceq_d\gcd(x,n) \iff y\preceq^*_d x.
\end{align*}
\vspace*{-8mm}\\
In this case, $\preceq^*_{d_{(n)}}$ cannot be an antisymmetric relation on $\mM$ even if $\preceq_d$ were anti-\lb symmetric on $\mD_n$, i.e. if $\preceq_d$ were a partial order on $\mD_n$.
\end{rema}

For any $n\in\mN>1$, the preorder $\preceq_{d^{\around{n}}}$ on $\mM$ induced by the extension $d^{\around{n}}$ of a given pseudometric $d$ on $\mD_n$ turns out to be the extension $\preceq^*_d$ of the preorder $\preceq_d$ on $\mD_n$ induced by the pseudometric $d$ on $\mD_n$. The following diagram commutes.

\begin{figure}[H]
  \centering\vspace*{-3mm}
\begin{gather*}\xymatrix{
  \mD_n & \subseteq & \mM \\
  d \ar[0,2]^{\ref{extended_pseudometric}}
  \ar[2,0]_{\ref{induced_preorder}}
    & & d^{\around{n}} \ar[2,0]^{\ref{induced_preorder}} \\
  \\
  \preceq_d \ar[0,2]_{\ref{extended_relation}} & & \preceq_{d^{\around{n}}} 
}\end{gather*}
   \caption{Extension of pseudometrics and preorders.}\label{dia1}\ 
\end{figure}

\begin{prop}
Let $n\in\mN>1$, $d$ be a pseudometric on $\mD_n$, and $x,y\in\mM\supseteq\mD_n$. Then, the relations $\preceq^*_d$ according to Lemmata~\refs{induced_preorder} and \refs{extended_relation}, and $\preceq_{d^{\around{n}}}$ according to Lemmata~\refs{extended_pseudometric}\lb and \refs{induced_preorder} are identical.
\[x\preceq^*_d y \iff x\preceq_{d^{\around{n}}} y.\]
\end{prop}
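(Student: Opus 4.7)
The plan is to prove this by a straightforward definitional chase, unfolding both sides to a common expression. The key (and only nontrivial) observation is that $\gcd(1,n)=1$, so that the centre point $1$ used to construct the induced preorder is preserved under the $\gcd$-projection onto $\mD_n$.

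First I would apply Lemma~\refs{induced_preorder} to the extended pseudometric $d^{\around{n}}$ on $\mM$ (which is a pseudometric by Lemma~\refs{extended_pseudometric}). This yields
\[
x\preceq_{d^{\around{n}}} y \iff d^{\around{n}}(1,x)\le d^{\around{n}}(1,y).
\]
Then I would rewrite the right-hand side using Definition~\refs{extended_function}:
\[
d^{\around{n}}(1,x)=d\bigl(\gcd(1,n),\gcd(x,n)\bigr)=d\bigl(1,\gcd(x,n)\bigr),
\]
where the second equality uses $\gcd(1,n)=1$ (valid because $n\in\mN>1$, and in fact for all $n\in\mN$). The analogous identity holds for $y$.

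Next I would invoke Lemma~\refs{induced_preorder} once more, this time on $\mD_n$ with the original pseudometric $d$: since $\gcd(x,n),\gcd(y,n)\in\mD_n$,
\[
\gcd(x,n)\preceq_d\gcd(y,n) \iff d\bigl(1,\gcd(x,n)\bigr)\le d\bigl(1,\gcd(y,n)\bigr).
\]
By Lemma~\refs{extended_relation}, the left-hand side is exactly $x\preceq^*_d y$. Chaining the three equivalences gives the result.

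There is no real obstacle here — the statement is precisely a commutativity-of-diagram assertion (Figure~\refs{dia1}), and the only content is the trivial identity $\gcd(1,n)=1$, which is what makes the central point $1$ behave consistently when we pass from $\mM$ to $\mD_n$ via the $\gcd$-projection. If anything, the only point requiring care is to make sure that the extended pseudometric is evaluated at the integer $1\in\mM$ while the original pseudometric is evaluated at the divisor $1\in\mD_n$; this is precisely where $\gcd(1,n)=1$ is used.
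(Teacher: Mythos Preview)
Your proof is correct and follows essentially the same route as the paper: unfold $\preceq_{d^{\around{n}}}$ via Lemma~\refs{induced_preorder}, apply Definition~\refs{extended_function} together with $\gcd(1,n)=1$, then use Lemma~\refs{induced_preorder} again on $\mD_n$ and Lemma~\refs{extended_relation} to recover $\preceq^*_d$. The paper presents the same chain of equivalences in the same order.
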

\begin{proof}
According to Lemma~\refs{extended_relation}, we know $x\preceq^*_d y \iff \gcd(x,n)\preceq_d\gcd(y,n)$.\lb On the other hand, we get
\vspace*{-3mm}
\begin{align*}
x\preceq_{d^{\around{n}}} y & \iff d^{\around{n}}(1,x)\le d^{\around{n}}(1,y) & \text{by Lemma~\refs{induced_preorder},}\\
& \iff d\bigl(\gcd(1,n),\gcd(x,n)\bigr)\le d\bigl(\gcd(1,n),\gcd(y,n)\bigr) & \text{by Lemma~\refs{extended_pseudometric},}\\
& \iff d\bigl(1,\gcd(x,n)\bigr)\le d\bigl(1,\gcd(y,n)\bigr),\\
& \iff  \gcd(x,n)\preceq_d \gcd(y,n) & \text{by Lemma~\refs{induced_preorder}.}
\end{align*}
\vspace*{-14.5mm}\\
\end{proof}

%%%%%%%%%%%%%%%%%%%%%%%%%%%%%%%%%%%%%%%%%%%%%%%%%%%%%%%

%%%%%%%%%%%%%%%%%%%%%%%%%%%%%%%%%%%%%%%%%%%%%%%%%%%%%%%
\section{Equivalences and partitions\phj} \label{EP}
%%%%%%%%%%%%%%%%%%%%%%%%%%%%%%%%%%%%%%%%%%%%%%%%%%%%%%%

We recall basic notions about relations and corresponding partitions, see e.g. \cite{Schechter_1997, Burris_Sankappanavar_2012}.\lb A reflexive, symmetric, and transitive relation $\sim$ on $\mM$ with
\vspace*{-2mm}
\begin{itemize}\itemsep-6mm
\item $x \sim x$ \tab reflexivity,\\
\item  $x \sim y \Longrightarrow y \sim x$ \tab symmetry, \quad and\\
\item $x \sim y \land y \sim z \Longrightarrow x \sim z$ \tab transitivity
\end{itemize} \vspace*{-2mm}
is called \textbf{equivalence relation}.

The corresponding set of equivalence classes $\{ [x]_\sim=\{ y\in\mM : y\sim x \} : x\in\mM \}$ form a partition of S. This partition is called the \textbf{quotient set} $\mM\qs\sim_n$.\\

Let $\mM_1,\mM_2\subseteq\mZ$, and $x,y\in\mM_1$. Two equivalence relations $\sim_1$ on $\mM_1$ and $\sim_2$ on $\mM_2$ or their corresponding quotient sets $\mM_1\qs\sim_1$ and $\mM_2\qs\sim_2$ are called \textbf{isomorphic}, i.e.
\vspace*{-3mm}
\[\text{\qquad} \sim_1 \ \simeq\  \sim_2 \text{\qquad and\qquad} \mM_1\qs\sim_1 \ \simeq\  \mM_2\qs\sim_2\]
\vspace*{-7mm}\\
if there is a relation-preserving morphism $\rho:\mM_1\to\mM_2$ with\\
$x\sim_1 y \iff \rho(x)\sim_2\rho(y)$ or equivalently $[x]_{\sim_1} = [y]_{\sim_1} \iff [\rho(x)]_{\sim_2} = [\rho(y)]_{\sim_2}$.\\

Analogous to the refinement of partitions, we describe the refinement of equivalence relations and quotient sets as follows. Let $\sim_1$ and $\sim_2$ be equivalence relations on $\mM$ with their corresponding quotient sets $\mM\qs\sim_1$ and $\mM\qs\sim_2$, respectively. We call $\sim_1$ a \textbf{refinement} of $\sim_2$ if and only if $\mM\qs\sim_1$ is a refinement of $\mM\qs\sim_2$, i.e. every equivalence class of $\mM\qs\sim_1$ is included in an equivalence class of $\mM\qs\sim_2$. We write\\
\vspace*{-3mm}
\[\text{\qquad} \sim_1 \ \sqsubseteq\  \sim_2 \text{\qquad and\qquad} \mM\qs\sim_1 \ \sqsubseteq\  \mM\qs\sim_2\]
\vspace*{-5mm}\\
if $[x]_{\sim_1} \subseteq [x]_{\sim_2}$ holds for all $x\in\mM$.\\\\

For pseudometrics and preorders discussed in the previous section, we investigate specific equivalence relations and its related quotient sets. The general notation, we define as follows.

\begin{lemm} \label{ER_QS}
Let $1,x,y\in\mM$, and $d$ be a pseudometric and $\preceq_d$ the induced preorder on $\mM$. Then, the relations $\sim_d$ and $\sim_{\preceq_d}$ defined by
\vspace*{-2mm}
\begin{align*}
x \sim_d y &\iff d(x,y)=0 \text{\qquad and}\\
x \sim_{\preceq_d} y &\iff x\preceq_d y \land y\preceq_d x
\end{align*}
\vspace*{-7mm}\\
are equivalence relations.
\end{lemm}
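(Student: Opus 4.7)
The plan is to verify the three equivalence axioms (reflexivity, symmetry, transitivity) separately for each of the two relations, using only the pseudometric axioms for $\sim_d$ and the preorder axioms (already granted by Lemma~\ref{induced_preorder}) for $\sim_{\preceq_d}$.

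For $\sim_d$, I would read off each property directly from the corresponding pseudometric axiom. Reflexivity is immediate from the identity axiom $d(x,x)=0$. Symmetry of $\sim_d$ follows from the symmetry $d(x,y)=d(y,x)$, since the defining condition $d(x,y)=0$ is then equivalent to $d(y,x)=0$. The only slightly substantive step is transitivity: if $d(x,y)=0$ and $d(y,z)=0$, the triangle inequality yields
\[ 0 \le d(x,z) \le d(x,y) + d(y,z) = 0, \]
so $d(x,z)=0$, where I use the non-negativity of pseudometric values derived in the text.

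For $\sim_{\preceq_d}$, the argument is the standard fact that the symmetric kernel of any preorder is an equivalence relation, and it transfers verbatim here. Reflexivity is direct from reflexivity of $\preceq_d$: since $x \preceq_d x$, both conjuncts of $x \preceq_d x \land x \preceq_d x$ hold. Symmetry is essentially tautological, because the defining condition $x \preceq_d y \land y \preceq_d x$ is manifestly symmetric in $x$ and $y$. Transitivity is an immediate double application of the transitivity of $\preceq_d$: from $x \preceq_d y \land y \preceq_d x$ and $y \preceq_d z \land z \preceq_d y$ one extracts $x \preceq_d z$ (via $x \preceq_d y \preceq_d z$) and $z \preceq_d x$ (via $z \preceq_d y \preceq_d x$).

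There is no real obstacle here; both parts are routine verifications. The only care needed is to write the transitivity step for $\sim_d$ cleanly, chaining triangle inequality with the previously established fact $d \ge 0$, and to keep the two conjuncts of $\sim_{\preceq_d}$ organised when applying transitivity of $\preceq_d$ twice.
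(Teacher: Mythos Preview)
Your proposal is correct and follows essentially the same approach as the paper: both verify reflexivity, symmetry, and transitivity for $\sim_d$ directly from the pseudometric axioms (with the triangle inequality plus non-negativity handling transitivity), and both treat $\sim_{\preceq_d}$ as the symmetric kernel of the preorder $\preceq_d$, reading off the three axioms from reflexivity and transitivity of $\preceq_d$.
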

\begin{proof}\ \\
The relation $\sim_d$ is

reflexive: $x \sim_d x$ because $d(x,x)=0$,

symmetric: $x \sim_d y \iff d(x,y)=0 \iff d(y,x)=0 \iff y \sim_d x$, \quad and

transitive: $x \sim_d y \land y \sim_d z \iff d(x,y)=d(y,z)=0\\
\text{\hspace{22mm}} \iff 0=d(x,y)+d(y,z)\ge d(x,z) \iff d(x,z)=0 \iff x \sim_d z.$\newpage%\\
\noindent
The relation $\sim_{\preceq_d}$ is

reflexive: $x \sim_{\preceq_d} x$ because $x\preceq_d x$,

symmetric: $x \sim_{\preceq_d} y \iff x\preceq_d y \land y\preceq_d x \iff y\preceq_d x \land x\preceq_d y\\
\text{\hspace{22mm}} \iff y \sim_{\preceq_d} x$, \quad and

transitive: $x \sim_{\preceq_d} y \land y \sim_{\preceq_d} z \iff x\preceq_d y \land y\preceq_d x \ \land\  y\preceq_d z \land z\preceq_d y\\
\text{\hspace{58mm}} \iff x\preceq_d y \land y\preceq_d z \ \land\  z\preceq_d y \land y\preceq_d x\\
\text{\hspace{22mm}} \iff x\preceq_d z \land z\preceq_d x \iff x \sim_{\preceq_d} z.$
\end{proof}\vspace*{1mm}

Given a pseudometric $d$ on $\mM$, an equivalent definition of $\sim_{\preceq_d}$ can be directly\lb derived from the resulting centred pseudometric or its generating function $f_d$. The definition of $\sim_{\preceq_d}$ in Lemma~\ref{induced_preorder} uses only the referred generating function.
\begin{lemm} \label{crit_QO}
Let $1,x,y\in\mM$, and $d$ be a pseudometric and $\preceq_d$ the induced preorder on $\mM$. Then,
\[x \sim_{\preceq_d} y \iff d(1,x)=d(1,y) \iff f_d(x)=f_d(y).\]
\end{lemm}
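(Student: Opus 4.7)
The plan is to chain together the three expressions by direct appeal to the earlier lemmas; the statement is essentially a compact repackaging of facts that already surfaced inside the proof of Lemma~\refs{equiv_PO}, so no new ideas are required.

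First I would dispose of the second equivalence $d(1,x)=d(1,y) \iff f_d(x)=f_d(y)$ by citing the defining identity $f_d(x)=d(1,x)$ introduced just before Lemma~\refs{centred_pseudometric}; this step is purely notational and does not use any property of $d$ beyond its having been supplied.

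For the first equivalence my plan is the following chain of biconditionals. By Lemma~\refs{ER_QS} the relation $x\sim_{\preceq_d}y$ unfolds to $x\preceq_d y \land y\preceq_d x$. Each conjunct rewrites, via the defining clause of $\preceq_d$ in Lemma~\refs{induced_preorder}, as $f_d(x)\le f_d(y)$ and $f_d(y)\le f_d(x)$, respectively. The conjunction of these two real-number inequalities is equivalent, by antisymmetry of $\le$ on $\mR$, to $f_d(x)=f_d(y)$, which is $d(1,x)=d(1,y)$ by definition of $f_d$. Every arrow in this chain is bidirectional, so reading it in reverse yields the opposite direction and closes the proof.

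There is no substantive obstacle here: the argument is a one-line unfolding once the preceding definitions are lined up. The only thing worth flagging is that nothing beyond the formula $f_d(x)=d(1,x)$ and the definition of $\preceq_d$ enters — neither symmetry nor the triangle inequality for $d$ is used — so the conclusion is really a statement about linear preorders induced by real-valued functions, and the lemma thereby gives a clean working criterion for equivalence under $\sim_{\preceq_d}$ that the subsequent partition arguments can invoke without having to reach back to the original pseudometric.
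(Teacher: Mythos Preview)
Your proof is correct and follows essentially the same approach as the paper: unfold $\sim_{\preceq_d}$ to the conjunction $x\preceq_d y\land y\preceq_d x$, rewrite each conjunct via the definition of $\preceq_d$ as inequalities on $f_d$, collapse the two inequalities to an equality by antisymmetry of $\le$ on $\mR$, and identify $f_d(x)$ with $d(1,x)$. Your additional remark that neither symmetry nor the triangle inequality of $d$ is actually used is accurate and not made explicit in the paper's brief display.
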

\begin{proof}\ 
\vspace*{-9mm}
\begin{align*}
x \sim_{\preceq_d} y &\iff x\preceq_d y \land y\preceq_d x\\
& \iff f_d(x)\le f_d(y) \land f_d(y)\le f_d(x)\\
& \iff f_d(x)=f_d(y) \iff d(1,x)=d(1,y).
\end{align*}
\vspace*{-14mm}\\
\end{proof}
\vspace*{8mm}

Pseudometrics on $\mD_n$ can be extended to pseudometrics on $\mM\supseteq\mD_n$ using $\gcd$\lb according to Definition~\refs{extended_pseudometric}. The induced preorders are also connected by $\gcd$\lb according to Definition~\refs{extended_relation}. Therefore, we can proof an isomorphism between related quotient sets as implied by Figure~\refs{dia1}.

We refer to the general notation of extended pseudometrics according to\lb Definition~\refs{extended_function}.\\[-3mm]

\begin{prop} \label{extension_isomorphism}
Let $n\in\mN>1$, $d^{\around{n}}$ be a pseudometric on $\mZ$ extended from a pseudometric $d$ on $\mD_n$, and $\mM\supseteq\mD_n$. Then,
\vspace*{-3mm}
\begin{align*}
(a)\text{\hspace{22.5mm}} \mM\qs\sim_{d^{\around{n}}} &\ \simeq\  \mD_n\qs\sim_d \text{,\qquad and}\\
(b)\text{\hspace{20mm}} \mM\qs\sim_{\preceq_{d^{\around{n}}}} &\ \simeq\  \mD_n\qs\sim_{\preceq_d}.
\end{align*}
\vspace*{-7mm}\\
Equivalently, \quad$\sim_{d^{\around{n}}}\ \simeq\  \sim_d$\quad and \quad$\sim_{\preceq_{d^{\around{n}}}}\ \simeq\  \sim_{\preceq_d}.$
\end{prop}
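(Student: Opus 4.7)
The plan is to exhibit a single morphism that witnesses both isomorphisms simultaneously, namely the $\gcd$-projection $\rho:\mM\to\mD_n$ defined by $\rho(x)=\gcd(x,n)$. This map is well-defined since $\gcd(x,n)\in\mD_n$ for every $x\in\mM$, and it is surjective because $\rho(d)=\gcd(d,n)=d$ for all $d\in\mD_n$ (as $d\mdiv n$). Moreover, $\rho$ acts as the identity on $\mD_n$, which will make the morphism property essentially tautological on the subset $\mD_n\subseteq\mM$.

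For part (a), I would unwind the definitions in a single chain. By Lemma~\ref{ER_QS}, $x\sim_{d^{\around{n}}} y\iff d^{\around{n}}(x,y)=0$. Applying Definition~\ref{extended_function} rewrites this as $d(\gcd(x,n),\gcd(y,n))=0$, i.e.\ $d(\rho(x),\rho(y))=0$, which is exactly $\rho(x)\sim_d\rho(y)$. Hence $\rho$ is a relation-preserving morphism in both directions and induces the desired isomorphism $\mM\qs\sim_{d^{\around{n}}}\simeq\mD_n\qs\sim_d$.

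For part (b), I would use the simpler criterion of Lemma~\ref{crit_QO}: $x\sim_{\preceq_{d^{\around{n}}}} y\iff d^{\around{n}}(1,x)=d^{\around{n}}(1,y)$. By Definition~\ref{extended_function} this equals $d(\gcd(1,n),\gcd(x,n))=d(\gcd(1,n),\gcd(y,n))$, and using $\gcd(1,n)=1$ this simplifies to $d(1,\rho(x))=d(1,\rho(y))$, i.e.\ $f_d(\rho(x))=f_d(\rho(y))$. A second application of Lemma~\ref{crit_QO}, this time on $\mD_n$, rewrites this as $\rho(x)\sim_{\preceq_d}\rho(y)$, so the same morphism $\rho$ works.

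There is no real obstacle here; the only subtlety is bookkeeping to make sure I keep straight which pseudometric lives on which set and which lemma I am invoking at each step. The key technical ingredient is simply the identity $\gcd(1,n)=1$ together with the definitional fact that $d^{\around{n}}$ factors through the projection $\rho$. Because $\rho$ restricts to the identity on $\mD_n$, surjectivity of the induced map on quotient sets is immediate, and the biconditional $x\sim y\iff\rho(x)\sim\rho(y)$ established above guarantees well-definedness and injectivity on classes, yielding the stated isomorphisms and, equivalently, $\sim_{d^{\around{n}}}\simeq\sim_d$ and $\sim_{\preceq_{d^{\around{n}}}}\simeq\sim_{\preceq_d}$.
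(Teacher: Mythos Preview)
Your proof is correct and follows essentially the same approach as the paper: both use the morphism $\rho(x)=\gcd(x,n)$ and verify the relation-preserving biconditional by unwinding the definitions. The only minor difference is that for part (b) you invoke Lemma~\ref{crit_QO} to reduce to the equality $d(1,\rho(x))=d(1,\rho(y))$, whereas the paper goes directly through the definition $x\preceq_{d^{\around{n}}} y\land y\preceq_{d^{\around{n}}} x$ and Lemma~\ref{extended_relation}; both routes are equivalent, and your explicit remark on surjectivity of $\rho$ is a small addition the paper leaves implicit.
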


\begin{proof} Let $x,y\in\mM$. Then $x\to\gcd(x,n)\in\mD_n$ is a relation-preserving morphism.\\\\
(a)
\vspace*{-3mm}
\begin{align*}
[x]_{\sim_{d^{\around{n}}}} = [y]_{\sim_{d^{\around{n}}}} &\iff x \sim_{d^{\around{n}}} y \iff d^{\around{n}}(x,y)=0 \\
&\iff d(\gcd(x,n),\gcd(y,n))=0 \iff \gcd(x,n) \sim_d \gcd(y,n)\\
&\iff [\gcd(x,n)]_{\sim_d} = [\gcd(y,n)]_{\sim_d}.
\end{align*}
\vspace*{-8mm}\\
(b)
\vspace*{-3mm}
\begin{align*}
[x]_{\sim_{\preceq_{d^{\around{n}}}}} = [y]_{\sim_{\preceq_{d^{\around{n}}}}} &\iff x \sim_{\preceq_{d^{\around{n}}}} y \iff x\preceq_{d^{\around{n}}} y \land y\preceq_{d^{\around{n}}} x\\
&\iff \gcd(x,n)\preceq_d\gcd(y,n)) \land \gcd(y,n)\preceq_d\gcd(x,n)) \hspace{17mm}\\
&\iff \gcd(x,n) \sim_{\preceq_d} \gcd(y,n)\\
&\iff [\gcd(x,n)]_{\sim_{\preceq_d}} = [\gcd(y,n)]_{\sim_{\preceq_d}}.
\end{align*}
\vspace*{-14mm}\\
\end{proof}
\vspace*{1mm}

Another conclusion can be derived from Figure~\ref{dia1} with respect to the definition\lb in Lemma~\refs{induced_preorder}. Any equivalence relation based on a pseudometric according to\lb Lemma~\ref{ER_QS} turns out to be a refinement of the equivalence relation based on the\lb induced preorder.

\begin{prop} \label{refinement}
Let $d$ be a pseudometric on $\mM$, and $\preceq_d$ the corresponding induced preorder. Then,
\[\mM\qs\sim_d \ \sqsubseteq\  \mM\qs\sim_{\preceq_d}.\]
\end{prop}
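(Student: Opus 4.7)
The plan is to unwind both equivalence relations to distance statements and then apply the triangle inequality twice. By the definition of refinement given just before the proposition, the claim $\mM\qs\sim_d \ \sqsubseteq\  \mM\qs\sim_{\preceq_d}$ amounts to showing $[x]_{\sim_d}\subseteq [x]_{\sim_{\preceq_d}}$ for every $x\in\mM$. So I would fix $x,y\in\mM$ with $y\in [x]_{\sim_d}$ and aim to prove $y\in [x]_{\sim_{\preceq_d}}$.

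First I would translate the hypothesis using Lemma~\ref{ER_QS}: $y\in [x]_{\sim_d}$ means exactly $d(x,y)=0$. Then I would translate the goal using Lemma~\ref{crit_QO}: it suffices to show $d(1,x)=d(1,y)$, since this is equivalent to $x\sim_{\preceq_d} y$.

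The key step is to apply the triangle inequality in both directions. From $d(1,x)\le d(1,y)+d(y,x)=d(1,y)+0=d(1,y)$ and symmetrically $d(1,y)\le d(1,x)+d(x,y)=d(1,x)$, one concludes $d(1,x)=d(1,y)$. This gives $x\sim_{\preceq_d} y$, i.e.\ $y\in [x]_{\sim_{\preceq_d}}$, completing the inclusion.

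There is no real obstacle here; the only thing to be careful about is to not conflate $\sim_d$ and $\sim_{\preceq_d}$. The inclusion can be strict in the pseudometric (non-metric) setting: distinct points $x\ne y$ may lie in the same $\sim_d$-class (since $d(x,y)=0$ is possible without $x=y$), and such pairs automatically satisfy $d(1,x)=d(1,y)$ by the argument above; but the converse fails in general, because two points can be equidistant from $1$ without being at distance $0$ from each other. So the proof is a short, direct calculation, but it is worth noting that equality of the two quotient sets would correspond exactly to the metric case, by Lemma~\ref{equiv_PO}.
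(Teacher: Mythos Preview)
Your proof is correct and follows essentially the same route as the paper's: reduce to $[x]_{\sim_d}\subseteq[x]_{\sim_{\preceq_d}}$, unfold $d(x,y)=0$, apply the triangle inequality in both directions to get $d(1,x)=d(1,y)$, and invoke Lemma~\ref{crit_QO}. One small caveat about your closing remark: equality of the two quotient sets does \emph{not} characterise the metric case via Lemma~\ref{equiv_PO} --- the trivial pseudometric $d\equiv 0$ gives $\sim_d=\sim_{\preceq_d}$ (both collapse everything) without being a metric --- so that final sentence should be dropped or rephrased.
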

\begin{proof}
Let $1,x,y\in\mM$. According to the triangle inequality, we have\vspace*{1mm}\\
\hspace*{10mm}$d(1,x) \le d(1,y)+d(y,x)$\quad and \quad$d(1,x)+d(x,y) \ge d(1,y)$.\vspace*{1mm}\\
Then $[x]_{\sim_d} \subseteq [x]_{\sim_{\preceq_d}}$ because
\vspace*{-3mm}
\begin{align*}
y\in[x]_{\sim_d} &\iff x \sim_d y \iff d(x,y)=0\\
&\ \,\Longrightarrow\ \, d(1,x) \le d(1,y)+0\ \land\ d(1,y) \le d(1,x)+0\\
&\iff d(1,x)\le d(1,y)\ \land\ d(1,y)\le d(1,x) \iff d(1,x)=d(1,y)\\
&\underset{\ref{crit_QO}}{\iff} x \sim_{\preceq_d} y \iff y\in[x]_{\sim_{\preceq_d}}.
\end{align*}
\vspace*{-12mm}\\
\end{proof}

\begin{rema}
Let $n\in\mN>1$. Applying Proposition~\refs{extension_isomorphism}, we get
\vspace*{-3mm}
\begin{align*}
\mD_n\qs\sim_d &\ \sqsubseteq\  \mD_n\qs\sim_{\preceq_d} \text{,\qquad and}\\
\mM\qs\sim_{d^{\around{n}}} &\ \sqsubseteq\  \mM\qs\sim_{\preceq_{d^{\around{n}}}}.
\end{align*}
\vspace*{-8mm}\\
Analoguosly hold \quad$\sim_d \ \sqsubseteq\  \sim_{\preceq_d}$\quad and \quad$\sim_{d^{\around{n}}} \ \sqsubseteq\  \sim_{\preceq_{d^{\around{n}}}}.$
\end{rema}
\ \\[-6mm]

We summarise the latest results similar to Figure~\refs{dia1}.
\vspace*{3mm}

\begin{figure}[H]
  \centering\vspace*{-3mm}
\begin{gather*}\xymatrix{
  \mD_n & \subseteq & \mM \\
  \sim_d \ar[0,2]^{\text{$\ \simeq\ $}}_{\ref{extension_isomorphism}}
  \ar[2,0]^{\ref{refinement}}_{\rotatebox{270}{$\ \sqsubseteq\ $}}
    & & \sim_{d^{\around{n}}} \ar[2,0]^{\rotatebox{270}{$\ \sqsubseteq\ $}}_{\ref{refinement}} \\
  \\
 \sim_{\preceq_d} \ar[0,2]^{\ref{extension_isomorphism}}_{\text{$\ \simeq\ $}}
   & & \sim_{\preceq_{d^{\around{n}}}}
}\end{gather*}
   \caption{Extensions and refinements of equivalence relations.}\label{dia2}\ 
\end{figure}
\newpage

Below we discuss quotient sets and their refinements in the context of the\lb equivalence relations defined above. The respective partitions of the underlying\lb set $\mM$ essentially depend on the existence of pairs \ $x,y\in\mM$ \ with \ $d(x,y)=0$ \ or $d(1,x)=d(1,y)$.

If $d$ is even a metric on $\mM$ then there is an isomorphism between $\mM$ and the quotient set $\mM\qs\sim_d$. Each element of $\mM$ builds its own equivalence class. The quotient set $\mM\qs\sim_d$ represents the finest possible partition of $\mM$.

\begin{lemm} \label{metric_isomorphism}
Let $d$ be a metric on $\mM$. Then,
\vspace*{-3mm}
\[\mM \ \simeq\ \mM\qs\sim_d.\]
\vspace*{-8mm}
\end{lemm}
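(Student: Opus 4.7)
The plan is to exhibit an explicit relation-preserving bijection $\rho:\mM\to\mM\qs\sim_d$ and verify it satisfies the notion of isomorphism introduced earlier in Section~\refs{EP}. The key observation is that the metric axiom (identity of indiscernibles) collapses $\sim_d$ to equality on $\mM$, so every equivalence class is a singleton.

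First I would unpack the definition of $\sim_d$ from Lemma~\refs{ER_QS} and invoke the fact that $d$ is a metric: by definition, $d(x,y)\!=\!0 \iff x\!=\!y$, hence
\[x\sim_d y \iff d(x,y)=0 \iff x=y.\]
From this it follows directly that $[x]_{\sim_d}=\{y\in\mM : y\sim_d x\}=\{x\}$ for every $x\in\mM$, so the quotient set is $\mM\qs\sim_d=\bigl\{\{x\}:x\in\mM\bigr\}$.

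Next I would define $\rho:\mM\to\mM\qs\sim_d$ by $\rho(x)=[x]_{\sim_d}=\{x\}$ and verify it is a bijection: surjectivity is immediate from the description of $\mM\qs\sim_d$ above, and injectivity follows from $\rho(x)=\rho(y) \Longrightarrow \{x\}=\{y\} \Longrightarrow x=y$. To match the formal notion of isomorphism of equivalence relations given before Lemma~\refs{ER_QS}, I would equip $\mM$ with the trivial equivalence relation (equality, which coincides with $\sim_d$ here) and $\mM\qs\sim_d$ with equality of classes, then check that $\rho$ is relation-preserving in the required sense:
\[x=y \iff \rho(x)=\rho(y) \iff [x]_{\sim_d}=[y]_{\sim_d}.\]

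There is essentially no obstacle here; the lemma is almost a tautology once the identity of indiscernibles is applied. The only point to be careful about is the interpretation of \enquote{$\simeq$} between a set $\mM$ and a quotient set, which should be read through the isomorphism definition given earlier, with $\mM$ carrying the equality relation. The argument is therefore short and formal.
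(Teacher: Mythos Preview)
Your proposal is correct and takes essentially the same approach as the paper: both rest on the single observation that for a metric $d$ one has $x\sim_d y \iff d(x,y)=0 \iff x=y$, so that each class is a singleton. The paper stops at this chain of equivalences, while you spell out the bijection $\rho(x)=[x]_{\sim_d}$ and the interpretation of $\mM$ under equality; this extra care is harmless but not needed beyond what the paper records.
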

\begin{proof}
Let $x,y\in\mM$. In a metric, we have \ $d(x,y)=0 \!\iff\! x=y$ \ by definition.\lb Then also $x \sim_d y \iff d(x,y)=0 \iff x=y$.
\vspace*{-5mm}\\
\end{proof}
\begin{rema}
If furthermore $1\in\mM$ and $d(1,x)\!=\!d(1,y) \Longrightarrow x\!=\!y$, or equivalently\lb $f_d(x)\!=\!f_d(y) \Longrightarrow x\!=\!y$, then $\preceq_d$ is even a partial order by Lemma~\refs{induced_preorder}.\vspace*{1mm}

In this case, we also get $d(x,y)=0 \Longrightarrow x=y$ because of Lemma~\refs{equiv_PO}. Then
\vspace*{-3mm}
\[\mM \ \simeq\ \mM\qs\sim_d \ \ \simeq\ \mM\qs\sim_{\preceq_d}.\]
\end{rema}

In general, $\sim_d$ is a refinement of $\sim_{\preceq_d}$ according to Proposition~\refs{refinement}. The quotient set $\mM\qs\sim_{\preceq_d}$ aggregates integers that cannot be distinguished by the respective preorder $\sim_{\preceq_d}$. It is invariant against permutation of the function values of $f_d$ on $\mM$. The relation $\sim_{\preceq_d}$ is completely determined by them.

\begin{lemm} \label{basic_permutation}
Let $1,x\in\mM$, and $d_1,d_2$ be pseudometrics on $\mM$ with $f_{d_2}(x)=\nu\bigl(f_{d_1}(x)\bigr)$\lb where $\nu$ is a permutation of $f_{d_1}(\mM)$. Then
\vspace*{-3mm}
\[\mM\qs\sim_{\preceq_{d_1}}\ = \ \mM\qs\sim_{\preceq_{d_2}}.\]
\vspace*{-8mm}
\end{lemm}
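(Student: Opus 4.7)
The plan is to reduce the statement to Lemma~\ref{crit_QO}, which characterises $\sim_{\preceq_d}$ via level sets of the generating function: $x\sim_{\preceq_d}y \iff f_d(x)=f_d(y)$. Once that reformulation is in hand, the equality of quotient sets boils down to showing that $f_{d_1}$ and $f_{d_2}$ induce the same partition of $\mM$ into level sets. Note that the claim is strict equality of partitions, not merely isomorphism, so I will not invoke any morphism between $\mM$ and a quotient; I will compare the equivalence relations pointwise.

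First I would fix arbitrary $x,y\in\mM$ and apply Lemma~\ref{crit_QO} to both pseudometrics, reducing $x\sim_{\preceq_{d_i}}y$ to $f_{d_i}(x)=f_{d_i}(y)$ for $i=1,2$. The hypothesis then reads $f_{d_2}=\nu\circ f_{d_1}$ on $\mM$, where $\nu$ is a permutation, hence a bijection, of the set $f_{d_1}(\mM)$. In particular $\nu$ is injective on that set, so
\[ f_{d_1}(x)=f_{d_1}(y) \iff \nu(f_{d_1}(x))=\nu(f_{d_1}(y)) \iff f_{d_2}(x)=f_{d_2}(y). \]
Chaining these equivalences yields $x\sim_{\preceq_{d_1}}y \iff x\sim_{\preceq_{d_2}}y$, and therefore $[x]_{\sim_{\preceq_{d_1}}}=[x]_{\sim_{\preceq_{d_2}}}$ for every $x\in\mM$. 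The two partitions of $\mM$ then coincide element by element, giving the desired equality of quotient sets.

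The only subtle point, and the closest thing to an obstacle, is being precise about the domain of $\nu$. The hypothesis specifies that $\nu$ permutes $f_{d_1}(\mM)$, which matters because a priori one might worry that $f_{d_2}(\mM)$ could be a different subset of $\mR_{\ge0}$. Bijectivity of $\nu$ on $f_{d_1}(\mM)$ forces $f_{d_2}(\mM)=f_{d_1}(\mM)$, so the composition $\nu\circ f_{d_1}$ is well-defined and the injectivity step above is valid without any further assumption on $d_2$. It is worth observing that the argument makes no use of identity, symmetry, or the triangle inequality for either $d_1$ or $d_2$; the conclusion is purely a statement about the level sets of the two generating functions, and the pseudometric structure enters only through Lemma~\ref{crit_QO}.
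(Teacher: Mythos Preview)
Your proof is correct and follows essentially the same route as the paper: apply Lemma~\ref{crit_QO} to rewrite $x\sim_{\preceq_{d_i}}y$ as $f_{d_i}(x)=f_{d_i}(y)$, then use the injectivity of the permutation $\nu$ to pass back and forth between $f_{d_1}$ and $f_{d_2}$. The paper's proof is simply the one-line chain of equivalences you wrote out; your additional remarks about the domain of $\nu$ and the irrelevance of the pseudometric axioms are accurate but not needed for the argument.
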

\begin{proof}
As a consequence of Lemma~\refs{crit_QO}, we get
\vspace*{-3mm}
\begin{align*}
x \sim_{\preceq_{d_1}} y &\iff f_{d_1}(x)=f_{d_1}(y) \iff \nu\bigl(f_{d_1}(x)\bigr)=\nu\bigl(f_{d_1}(y)\bigr)\\
&\iff f_{d_2}(x)=f_{d_2}(y) \iff x\preceq_{d_2} y.
\end{align*}
\vspace*{-14mm}\\
\end{proof}

The coarsest conceivable partition of $\mM$ is the trivial partition $\{\mM\}$, in which all elements of $\mM$ fall into the same, single equivalence class. It can only be generated\lb by the trivial pseudometric.

\begin{lemm} \label{coarsest}
Let $d$ be a pseudometric on $\mM$ and $1\in\mM$. Then
\vspace*{-3mm}
\[\{\mM\} \ =\ \mM\qs\sim_d \ \ =\ \mM\qs\sim_{\preceq_d}
\qquad\iff\qquad
\forall \ x,y\in\mM\ :\ d(x,y)=0.\]
\vspace*{-8mm}
\end{lemm}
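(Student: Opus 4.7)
The plan is to split the biconditional into its two directions. The main technical input beyond the definitions is Lemma~\ref{crit_QO}, which states $x \sim_{\preceq_d} y \iff f_d(x) = f_d(y) \iff d(1,x) = d(1,y)$, together with the triangle inequality of $d$.

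For the direction ``$\Longleftarrow$'', I would assume $d(x,y) = 0$ for all $x,y \in \mM$. Then $x \sim_d y$ holds trivially for every pair, so $\mM\qs\sim_d = \{\mM\}$. Moreover $f_d(x) = d(1,x) = 0$ for all $x$, so $f_d(x) = f_d(y)$ for all $x,y$, and Lemma~\ref{crit_QO} yields $x \sim_{\preceq_d} y$ for every pair, giving $\mM\qs\sim_{\preceq_d} = \{\mM\}$.

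For the direction ``$\Longrightarrow$'', the hypothesis already contains $\mM\qs\sim_d = \{\mM\}$, which by the very definition of $\sim_d$ means $d(x,y) = 0$ for all $x,y \in \mM$, so the conclusion is immediate. It is worth pointing out that the combined hypothesis is in fact redundant: each of the two equalities implies the other. The implication $\mM\qs\sim_d = \{\mM\} \Longrightarrow \mM\qs\sim_{\preceq_d} = \{\mM\}$ follows from Proposition~\ref{refinement} since $\sim_d \sqsubseteq \sim_{\preceq_d}$; the reverse implication is obtained by applying Lemma~\ref{crit_QO} with $y = 1$, which yields $d(1,x) = d(1,1) = 0$ for every $x$, and then invoking $d(x,y) \le d(x,1) + d(1,y) = 0$.

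There is essentially no obstacle. The result is a consistency check between the trivial pseudometric and the trivial partitions it induces, and it unfolds purely from the definitions plus one use of the triangle inequality; the only mildly interesting point is the observation above that either one of the two quotient-set equalities on its own is already equivalent to $d \equiv 0$.
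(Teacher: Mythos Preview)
Your proof is correct and follows essentially the same route as the paper's: both rely on the definitions of $\sim_d$ and $\sim_{\preceq_d}$, Lemma~\ref{crit_QO}, and one application of the triangle inequality through the point $1$. The paper compresses everything into a single chain of equivalences (to be read under a global $\forall x,y$), whereas you unfold the two directions separately and add the useful observation that either quotient-set equality on its own already forces $d\equiv 0$; this extra remark is not in the paper but is a nice clarification rather than a different method.
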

\begin{proof}
For all $x,y\in\mM$, we get
\vspace*{-3mm}
\begin{align*}
d(x,y)=0 &\iff d(1,x)=d(1,y)=0 \iff f_d(x)=f_d(y)=0\\
&\iff x\sim_d y \iff x\sim_{\preceq_d} y.
\end{align*}
\vspace*{-14mm}\\
\end{proof}

%%%%%%%%%%%%%%%%%%%%%%%%%%%%%%%%%%%%%%%%%%%%%%%%%%%%%%%

%%%%%%%%%%%%%%%%%%%%%%%%%%%%%%%%%%%%%%%%%%%%%%%%%%%%%%%
\section{Arithmetic functions and pseudometrics\phj} \label{AFM}
%%%%%%%%%%%%%%%%%%%%%%%%%%%%%%%%%%%%%%%%%%%%%%%%%%%%%%%

Arithmetic or number-theoretic functions are widely studied, e.g. \cite{Hildebrand_2013, Hardy_Wright_1975, McCarthy_1986, Schwarz_Spilker_1994, Cira_Smarandache_2016}. They are generally considered as complex-valued functions defined on the set of\lb natural numbers. The set of arithmetic functions together with pointwise\lb addition and convolution forms a commutative ring with unity. Considering pointwise addition, pointwise multiplication, and scalar multiplication results in a $\mC$-algebra instead.

Our focus will be on examining specific additive and multiplicative subsemigroups of this $\mC$-algebra and relating them to questions of metrics as discussed in Section~\refs{MO}. We will explore other topics such as orders, equivalences, and partitions in the next section. In this paper, we generally restrict the definitions to real-valued arithmetic functions $f:\mN\to\mR$.\\

Let $f,g:\mN\to\mR$, and $x,y,z\in\mN$. The (pointwise) \textbf{sum} and \textbf{product} of arithmetic functions are defined as $\quad(f+g)(x)=f(x)+g(x) \text{\quad and \quad} (f\cdot g)(x)=f(x)\cdot g(x)$ while their \textbf{convolution} is
\vspace*{-1mm}
\[ (f*g)(x)=\sum_{z\mid x} f(z)\mdot g(\frac{x}{z}).\]
\vspace*{-8mm}\\

As usual, additive and multiplicative arithmetic functions are characterised as\lb follows.
\vspace*{-2mm}
\begin{itemize}\itemsep-6mm
\item \textbf{additive} \tab $x\copr y\ \ \Longrightarrow\  f(x\mdot y)=f(x)\mplus f(y)$,\\
\item \textbf{strongly additive} \tab $\forall \ x,y\ \;\; : \,\quad f(x\mdot y)=f(x)\mplus f(y)$,\\
\item \textbf{multiplicative} \tab $x\copr y\ \ \Longrightarrow\  g(x\mdot y)=g(x)\ \mdot\  g(y)$,\\
\item \textbf{strongly multiplicative} \tab $\forall \ x,y\ \;\; : \,\quad g(x\mdot y)=g(x)\ \mdot\  g(y)$.
\end{itemize} \vspace*{-1mm}
Furthermore, the additive zero function $\mathit{0}(x)=0$ for all $x\in\mN$ is generally excluded from the definition of multiplicative arithmetic functions \cite{Hildebrand_2013, McCarthy_1986, Schwarz_Spilker_1994}. Then for all\lb multiplicative functions $g$, there exists an $y\in\mN$ with $g(y)\ne0$ and
\vspace*{-3mm}
\[g(y)=g(1\mdot y)=g(1)\mdot g(y).\]
\vspace*{-8mm}\\
So, $g(1)=1$ for all multiplicative arithmetic functions. As a conclusion of
\vspace*{-3mm}
\[f(1)=f(1\mdot 1)=f(1)\mplus f(1),\]
\vspace*{-8mm}\\
we also know $f(1)=0$ for all additive arithmetic functions $f$. Thus, additive and multiplicative functions form disjoint sets.

Moreover, additivity and multiplicativity are preserved under pointwise sum or product, respectively, because
\vspace*{-2mm}
\begin{align*}
(f_1+f_2)(x\mdot y)&=f_1(x\mdot y)+f_2(y\mdot x)=f_1(x)+f_1(y)+f_2(x)+f_2(y)\\
&=(f_1+f_2)(x)+(f_1+f_2)(y),\\
(g_1\cdot g_2)(x\mdot y)&=g_1(x\mdot y)\cdot g_2(y\mdot x)=g_1(x)\cdot g_1(y)\cdot g_2(x)\cdot g_2(y)\\
&=(g_1\cdot g_2)(x)\cdot (g_1\cdot g_2)(y).
\end{align*} \vspace*{-3mm}\\

In Section~\refs{MO}, we described the use of functions to generate pseudometrics and\lb preorders on sets of integers. We now apply these findings to certain arithmetic\lb functions.

Because of Lemma~\refs{function_metric}, arithmetic functions $f$ with $f(1)=0$ and $f(x)\ge0$ for $x>1$ generate pseudometrics or even metrics
\vspace*{-3mm}
\begin{align*}
d(x,y)=\begin{cases}
   0 &\text{for $x=y$, \quad and}\\
   f(x)+f(y) &\text{for $x\ne y$} \end{cases}
\end{align*} \vspace*{-3mm}\\
on integer subsets $\mM$ with  $\{1\}\subseteq\mM\subseteq\mZ$, so also on any $\mD_n$. Such a pseudometric\lb or metric on $\mD_n$ can be extended to a pseudometric $d^{\around{n}}$on $\mZ$ by
\vspace*{-3mm}
\begin{align*}
d^{\around{n}}(x,y)=d\bigl(\gcd(x,n),\gcd(y,n)\bigr)=\begin{cases}
   0 &\text{for $x=y$, \quad and}\\
   f\bigl(\gcd(x,n)\bigr)+f\bigl(\gcd(y,n)\bigr) &\text{for $x\ne y$} \end{cases}
\end{align*} \vspace*{-3mm}\\
according to Lemma~\refs{extended_pseudometric}. We summarise the results in the following corollary.\\

\begin{defi} \label{non-negative}
Let $x\in\mN>1$. We signify the set of arithmetic functions $f:\mN\to\mR$ with $f(1)=0$ and $f(x)\ge0$ as
\vspace*{-2mm}
\[\msG=\{f:f(x)\ge0\land f(1)=0\}.\]
\vspace*{-6mm}\\
Furthermore,
\vspace*{-5mm}
\[\msG_0=\{f\in\msG:f(x)>0\}.\]
\vspace*{-3mm}
\end{defi}

\begin{coro} \label{coro_metric1}
Let $n\in\mN>1$,  $x,y\in\mZ$, and $f\in\msG$. Then
\vspace*{-3mm}
\[d^{\around{n}}(x,y)=\begin{cases}
   0 &\text{for $x=y$, \quad and}\\
   f\bigl(\gcd(x,n)\bigr)+f\bigl(\gcd(y,n)\bigr) &\text{for $x\ne y$} \end{cases}
\]
\vspace*{-4mm}\\
is a pseudometric on $\mZ$, so also on $\mD_n$.

If $f\in\msG_0$ then $d^{\around{n}}$ is a metric on $\mD_n$.
\end{coro}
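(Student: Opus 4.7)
The plan is to compose Lemma~\refs{function_metric} and Lemma~\refs{extended_pseudometric}: first use $f$ to generate a pseudometric on $\mD_n$, then extend that pseudometric to $\mZ$ via the $\gcd$ construction of Definition~\refs{extended_function}. Since $f\in\msS_0$ satisfies $f(1)=0$ and $f(x)\ge 0$ for all $x\in\mN$, Lemma~\refs{function_metric} applied with $\mM=\mD_n$ yields a pseudometric
\[
d(u,v) \;=\; \begin{cases} 0 & \text{if } u=v,\\ f(u)+f(v) & \text{if } u\ne v,\end{cases}
\qquad u,v\in\mD_n.
\]
Lemma~\refs{extended_pseudometric} then transfers this to the pseudometric $d^{\around{n}}(x,y)=d(\gcd(x,n),\gcd(y,n))$ on any $\mM\supseteq\mD_n$, in particular on $\mZ$. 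Unfolding the inner definition of $d$ gives the piecewise expression stated in the corollary.

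For the metric claim on $\mD_n$, I would invoke the sharper half of Lemma~\refs{function_metric}: under the stronger hypothesis $f\in\msS$, i.e.\ $f(x)>0$ for all $x>1$, the generator $d$ above is in fact a metric on $\mD_n$. Since $\gcd(x,n)=x$ for every $x\in\mD_n$, the extension $d^{\around{n}}$ restricted to $\mD_n\times\mD_n$ coincides with $d$ and therefore remains a metric on $\mD_n$.

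The proof is essentially bookkeeping, and there is no substantive obstacle; the only point requiring care is the case distinction in the displayed formula. It is literally correct on $\mD_n$, where $x\ne y$ is equivalent to $\gcd(x,n)\ne \gcd(y,n)$; on $\mZ$, however, the extension from Lemma~\refs{extended_pseudometric} assigns distance $0$ to every pair whose gcds with $n$ agree, exactly as anticipated in the remark following Lemma~\refs{extended_pseudometric}. Once this identification is noted, invoking the two earlier lemmas in sequence immediately yields both the pseudometric assertion on $\mZ$ and the metric assertion on $\mD_n$.
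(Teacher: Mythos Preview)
Your approach is exactly the paper's: its proof consists of the single line ``The assertions follow from Lemmata~\ref{function_metric} and~\ref{extended_pseudometric},'' which is precisely the composition you describe---build $d$ on $\mD_n$ via Lemma~\ref{function_metric}, then extend to $\mZ$ via Lemma~\ref{extended_pseudometric}, and observe that on $\mD_n$ the extension agrees with $d$ so the metric clause carries over. Your extra remark about the case split is a genuine point of care that the paper glosses over: the displayed formula in the corollary branches on $x=y$ versus $x\ne y$, whereas the extension of Definition~\ref{extended_function} branches on $\gcd(x,n)=\gcd(y,n)$, so the two functions differ on pairs with $x\ne y$ but equal gcds; both are pseudometrics, and they coincide on $\mD_n$, so the corollary is correct either way.
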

\begin{proof}
The assertions follow from Lemmata~\ref{function_metric}~and~\refs{extended_pseudometric}.
\end{proof}
\ \\

The requirement \ $f(1)=0$ \ is satisfied by all additive arithmetic functions $f$.\lb So, non-negative additive functions form a subset of $\msG$.

For multiplicative arithmetic functions $g$ or $h$ instead, we have $g(1)=h(1)=1$. Two simple transformations, $f=g-1$ and $f=1-h$, yield $f(1)=0$. So, multiplicative functions can also be used to generate pseudometrics on $\mD_n$ if $f(x)\ge0$ is guaranteed after transformation for $x>1$. In the first case, we need $g(x)\ge1$ for $x>1$. The latter case results in $h(x)\le1$ at first. But if e.g. $h(x)<0$ and $h(y)<1/h(x)$ for some coprime $x,y>1$ then $h(x\mdot y)=h(x)\cdot h(y) >1$. So here, we need to require $0\le h(x)\le1$ for $x>1$.

We will show that both sets of multiplicative functions are in some sense equivalent to the non-negative additive functions and want to further explore all three in terms of metrics.

\begin{defi} \label{admissible}
Let $x\in\mN>1$. We define the following subsets of arithmetic functions and call its elements admissible.
\vspace*{-1mm}
\begin{align*}
\msA\,\ \ &\;:\ \!\text{ admissible additive arithmetic functions.}\\
\msA\,\ \ &=\{ f:\text{$f$ is an additive arithmetic function and }f(x)\ge0\}.\\
\msA_0\ &=\{f\in\msA:f(x)>0\}.\\[1.3ex]
\msM\ \ &\;:\ \!\text{ admissible multiplicative arithmetic functions, bounded below.}\\
\msM\ \ &=\{ g:\text{$g$ is a multiplicative arithmetic function and }g(x)\ge1\}.\\
\msM_1\,&=\{g\in\msM:g(x)>1\}.\\[1.3ex]
\msI\ \ \ &\;:\ \!\text{ admissible multiplicative arithmetic functions, bounded by a finite interval.}\\
\msI\ \ \ &=\{ h:\text{$h$ is a multiplicative arithmetic function and }0\le h(x)\le1\}.\\
\msI_0\ \,&=\{h\in\msI:0<h(x)\le1\}.\\
\msI_1\ \,&=\{h\in\msI:0\le h(x)<1\}.\\
\msI_{01}&=\{h\in\msI:0<h(x)<1\}.
\end{align*} \vspace*{-5mm}\\
\end{defi}

By this definition, we have $\msA \subset \msG$ and $\msA_0 \subset \msG_0$. Moreover, $\msA_0 \subset \msA$, $\msM_1 \subset \msM$, $\msI_{01} \subset \msI_0 \subset \msI$, and $\msI_{01} \subset \msI_1 \subset \msI$. Since additive and multiplicative arithmetic functions are generally disjoint,  so are $\msG$ and $\msM\cup\msI$. Furthermore, $\msM\cap\msI=\msM\cap\msI_0=\{\mathit{1}\}$.\\

All sets defined in Definition~\ref{admissible} are closed under pointwise addition or\lb multiplication, respectively. The set $\msG$ according to Definition~\ref{non-negative} is closed both\lb under pointwise addition and under pointwise multiplication. Furthermore, certain sets of admissible arithmetic functions are isomorphic with respect to both, the binary operations and their additive or multiplicative properties.

\begin{lemm}
$(\msG,+)$, $(\msG,\,\cdot\,)$, $(\msG_0,\,\cdot\,)$, $(\msA,+)$, $(\msM,\,\cdot\,)$, $(\msI,\,\cdot\,)$, and $(\msI_0,\,\cdot\,)$ are commutative monoids.

$(\msG_0,+)$, $(\msA_0,+)$, $(\msM_1,\,\cdot\,)$, and $(\msI_{01},\,\cdot\,)$ are subsemigroups of them, respectively.
\end{lemm}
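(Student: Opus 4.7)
The plan is to verify, for each claimed monoid, the four axioms (closure, associativity, commutativity, identity), and for each claimed subsemigroup only closure (associativity and commutativity being inherited). Since all sets consist of real-valued functions on $\mN$ and both operations are defined pointwise, associativity and commutativity follow immediately from the corresponding properties of $(\mR,+)$ and $(\mR,\,\cdot\,)$; no further argument is needed there. Moreover, the preservation of additivity under pointwise sum and of multiplicativity under pointwise product was already established in the display preceding Definition~\ref{admissible}, so I may cite that. Hence the real work reduces to: (i) produce an identity element lying in the set, and (ii) verify that the defining inequalities are preserved by the operation.

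For the additive monoids $(\msS_0,+)$ and $(\msA_0,+)$, the identity is the zero function $\mathit{0}$; it is trivially additive and lies in both sets. Closure is routine: the conditions $f(x)\ge 0$ and $f(1)=0$ are both preserved under pointwise addition. For $(\msM_0,\,\cdot\,)$, $(\msI_0,\,\cdot\,)$, and $(\msI_1,\,\cdot\,)$ the identity is the constant function $\mathit{1}$, which is multiplicative and lies in each set since $1\ge 1$ and $0<1\le 1$. Closure uses that the product of two numbers $\ge 1$ is $\ge 1$, and the product of two numbers in $[0,1]$ (respectively $(0,1]$) again lies in the same interval.

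The only nonobvious case, and the main obstacle, is $(\msS_0,\,\cdot\,)$ and $(\msS,\,\cdot\,)$: the usual multiplicative identity $\mathit{1}$ does \emph{not} lie in these sets because $\mathit{1}(1)=1\ne 0$. I propose the modified identity $e$ defined by $e(1)=0$ and $e(x)=1$ for $x>1$. Since every $f\in\msS_0$ satisfies $f(1)=0$, one checks $(e\mdot f)(1)=0\mdot f(1)=0=f(1)$ and $(e\mdot f)(x)=1\mdot f(x)=f(x)$ for $x>1$, so $e\mdot f=f$. Clearly $e\in\msS_0$, and in fact $e\in\msS$ since $e(x)=1>0$ on $x>1$. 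Closure of $(\msS_0,\,\cdot\,)$ and $(\msS,\,\cdot\,)$ follows because $f_1(1)=f_2(1)=0$ gives $(f_1 f_2)(1)=0$, and nonnegativity (respectively positivity for $x>1$) is preserved under multiplication.

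It remains to handle the four claimed subsemigroups. For $(\msS,+)$, $(\msA,+)$, $(\msM,\,\cdot\,)$, and $(\msI,\,\cdot\,)$ only closure must be checked, and each follows from a one-line inequality: two functions strictly positive on $x>1$ have a strictly positive sum; two values strictly greater than $1$ have a product strictly greater than $1$; two values in $(0,1)$ have a product in $(0,1)$. None of these four sets contains the corresponding identity element ($\mathit{0}$ violates $f(x)>0$; $\mathit{1}$ violates $g(x)>1$ or $h(x)<1$), which is precisely why they are subsemigroups but not submonoids. Apart from identifying the nonstandard identity $e$ in the multiplicative $\msS$-case, everything in this lemma is straightforward bookkeeping.
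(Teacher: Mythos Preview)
Your proposal is correct and follows essentially the same approach as the paper: pointwise associativity and commutativity are noted as automatic, preservation of additivity and multiplicativity is cited, and the work reduces to closure and identity. In particular, your key observation---the modified multiplicative identity $e$ with $e(1)=0$ and $e(x)=1$ for $x>1$, needed for $(\msS_0,\cdot)$ and $(\msS,\cdot)$---is exactly the identity element $f_0$ that the paper uses, and your explanation of why the four strict-inequality sets fail to contain their respective identities matches the paper's as well.
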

\begin{proof}
The binary operations are constructed pointwise from commutative and\lb associative operations on integers. Additivity and multiplicativity are preserved. The main point that remains to be proved is that the sets are closed under pointwise\lb operations and include an identity element.

Let $x,y,z\in\mN$ and $x>1$.\\[1.3ex]
$(\msG,+)$ : Let $f_1,f_2\in\msG$.

Then $(f_1+f_2)(1)=f_1(1)+f_2(1)=0$ \quad and \quad $(f_1+f_2)(x)=f_1(x)+f_2(x)\ge0$.

The identity element is $\mathit{0}(y)=0$. $(\msG_0,+)$ is not a monoid because $\mathit{0}\not\in\msG_0$.\\[0.5ex]
$(\msG,\,\cdot\,)$ : Let $f_1,f_2\in\msG$.

Then $(f_1\,\cdot\,f_2)(1)=f_1(1)\,\cdot\,f_2(1)=0$ \quad and \quad $(f_1\,\cdot\,f_2)(x)=f_1(x)\,\cdot\,f_2(x)\ge0$.

The identity element is $f_0$ with $f_0(0)=0$ and $f_0(x)=1$ for all $x\in\mN>1$.

Thus, $(\msG_0,\,\cdot\,)$ is also a monoid.\\[0.5ex]
$(\msA,+)$ : Let $f_1,f_2\in\msA$.

Then $(f_1+f_2)(1)=f_1(1)+f_2(1)=0$ \quad and \quad $(f_1+f_2)(x)=f_1(x)+f_2(x)\ge0$.

The identity element is $\mathit{0}(y)=0$. $(\msA_0,+)$ is not a monoid because $\mathit{0}\not\in\msA_0$.\\[0.5ex]
$(\msM,\,\cdot\,)$ : Let $g_1,g_2\in\msM$.

Then $(g_1\,\cdot\,g_2)(1)=g_1(1)\cdot g_2(1)=1$ \quad and $(g_1\,\cdot\,g_2)(x)=g_1(x)\cdot g_2(x)\ge1$.

The identity element $\mathit{1}(y)=1$. $(\msM_1,\,\cdot\,)$ is not a monoid because $\mathit{1}\not\in\msM_1$.\\[0.5ex]
$(\msI,\,\cdot\,)$ : Let $h_1,h_2\in\msI$.

Then $(h_1\,\cdot\,h_2)(1)=h_1(1)\cdot h_2(1)=1$ \quad and $0\le (h_1\,\cdot\,h_2)(x)=h_1(x)\cdot h_2(x)\le1$.

The identity element $\mathit{1}(y)=1$. $(\msI_{01},\,\cdot\,)$ is not a monoid because $\mathit{1}\not\in\msI_{01}$.\\[0.5ex]
$(\msI_0,\,\cdot\,)$ : Analogous to $(\msI,\,\cdot\,)$.\\[0.5ex]
$(\msI_1,\,\cdot\,)$ : Analogous to $(\msI_{01},\,\cdot\,)$.\\[-1.5ex]

The closure property of the semigroups follows in the same way, but the identity elements are not included.\vspace*{-5mm}\\
\end{proof}

\vspace*{1mm}
\begin{prop} \label{iso-binary}
$(\msA,+)$, $(\msM,\,\cdot\,)$, and $(\msI_0,\,\cdot\,)$ are isomorphic with respect to both, the binary operations and the additive and multiplicative properties of their elements. So are $(\msA_0,+)$, $(\msM_1,\,\cdot\,)$ and $(\msI_{01},\,\cdot\,)$
\end{prop}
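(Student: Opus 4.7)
The plan is to exhibit explicit isomorphisms via the exponential and logarithm maps, applied pointwise. Concretely, for $f\in\msA_0$, $g\in\msM_0$, $h\in\msI_1$, and every $x\in\mN$, set
\[
\Phi(f)(x)=\e^{f(x)},\qquad \Psi(f)(x)=\e^{-f(x)},\qquad \Phi^{-1}(g)(x)=\ln g(x),\qquad \Psi^{-1}(h)(x)=-\ln h(x).
\]
These maps are forced by the structural requirement that a homomorphism between a pointwise-additive monoid of non-negative functions and a pointwise-multiplicative monoid of functions taking values in $[1,\infty)$ or $(0,1]$ must convert sums into products and send $0$ to $1$.

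First I would check that the maps are well-defined on the correct codomains. For $f\in\msA_0$ we have $f(1)=0$ and $f(x)\ge0$ for $x>1$; hence $\Phi(f)(1)=\e^0=1$ and $\Phi(f)(x)\ge1$, so $\Phi(f)\in\msM_0$. Symmetrically, $\Psi(f)(1)=1$ and $0<\Psi(f)(x)\le1$, so $\Psi(f)\in\msI_1$. For the inverses, $g\in\msM_0$ gives $g(x)\ge1>0$, so $\ln g$ is defined and non-negative with $\ln g(1)=0$, placing $\ln g$ in $\msA_0$; and $h\in\msI_1$ gives $h(x)>0$, so $-\ln h$ is defined, non-negative, and vanishes at $1$. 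It is then immediate that $\Phi^{-1}\circ\Phi=\mathrm{id}_{\msA_0}$, $\Psi^{-1}\circ\Psi=\mathrm{id}_{\msA_0}$, and the composites in the other direction are also the identity.

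Next I would verify compatibility with the binary operations, using only the identity $\e^{a+b}=\e^a\,\e^b$ pointwise: for $f_1,f_2\in\msA_0$,
\[
\Phi(f_1+f_2)(x)=\e^{f_1(x)+f_2(x)}=\e^{f_1(x)}\mdot\e^{f_2(x)}=\bigl(\Phi(f_1)\cdot\Phi(f_2)\bigr)(x),
\]
and analogously $\Psi(f_1+f_2)=\Psi(f_1)\cdot\Psi(f_2)$. Identity elements match, since $\Phi(\mathit{0})=\Psi(\mathit{0})=\mathit{1}$. Finally, preservation of the additive/multiplicative property is the core point: for coprime $x,y$,
\[
f(x\mdot y)=f(x)+f(y)\iff \e^{f(x\mdot y)}=\e^{f(x)}\mdot\e^{f(y)}\iff \Phi(f)(x\mdot y)=\Phi(f)(x)\mdot\Phi(f)(y),
\]
so $f$ is additive iff $\Phi(f)$ is multiplicative; the same equivalence with $\Psi$ follows by taking reciprocals. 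By symmetry, the inverse maps carry (inverse) multiplicativity back to additivity.

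For the semigroup statement, I would simply restrict $\Phi$ and $\Psi$ to $\msA$: the strict inequality $f(x)>0$ for $x>1$ translates under $\Phi$ to $\Phi(f)(x)>1$ and under $\Psi$ to $\Psi(f)(x)<1$, giving isomorphisms $\msA\to\msM$ and $\msA\to\msI$. No obstacle is deep here; the only subtlety is bookkeeping the four function classes and verifying that the strict/non-strict bounds are preserved in both directions, which follows from strict monotonicity of $\exp$ and $\ln$.
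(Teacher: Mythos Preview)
Your proposal is correct and follows essentially the same route as the paper: both use the pointwise exponential and logarithm to pass between $\msA_0$ and $\msM_0$, $\msI_1$, verify that sums go to products, and check that the additive functional equation on coprime arguments becomes the multiplicative one. The paper additionally records the direct map $g\mapsto 1/g$ between $\msM_0$ and $\msI_1$, but this is just $\Psi\circ\Phi^{-1}$ in your notation, so nothing is missing.
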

\begin{proof}
Let $f,f_1,f_2\in\msA$, $g,g_1,g_2\in\msM$, $h,h_1,h_2\in\msI_0$, and $x,y\in\mN$.\\
The bijective morphisms

 $f\to g=\e^{\ f}\in\msM$ with the inverse  $g\to f=\ln(g)\in\msA$,

 $f\to h=\e^{-f}\in\msI_0$ with the inverse  $h\to f=-\ln(h)\in\msA$, and
 
 $g\to h=1/g\in\msI_0$ with the inverse  $h\to g=1/h\in\msM$\\
meet the requirements. The domains of definition of the functions are observed.

The isomorphisms preserve the binary operations of the semigroups.\\[0.5ex]
$\e^{(f_1+f_2)(x)}=\e^{\ f_1(x)+f_2(x)}=\e^{\ f_1(x)}\cdot \e^{\ f_2(x)}=g_1(x)\cdot g_2(x)=(g_1\cdot g_2)(x)$,\\[0.5ex]
$\e^{-(f_1+f_2)(x)}=\e^{-(f_1(x)+f_2(x))}=\e^{-f_1(x)}\cdot \e^{-f_2(x)}=h_1(x)\cdot h_2(x)=(h_1\cdot h_2)(x)$,\\[0.5ex]
$1/(g_1\mdot g_2)(x)\!=\!1/\bigl(g_1(x)\mdot g_2(x)\bigr)\!=\!\bigl(1/(g_1(x)\bigr)\mdot\bigl(1/g_2(x)\bigr)\!=\!h_1(x)\cdot h_2(x)\!=\!(h_1\mdot h_2)(x)$.\vspace{-4mm}\\

The isomorphisms also preserve the additive and multiplicative functional\lb equations of arithmetic functions. Let $x\perp y$.\\[0.5ex]
$\e^{\ f(x\cdot y)}=\e^{\ (f(x)+f(y))}=\e^{\ f(x)}\cdot \e^{\ f(y)}=g(x)\cdot g(y)=g(x\mdot y)$,\\[0.5ex]
$\e^{-f(x\cdot y)}=\e^{-(f(x)+f(y))}=\e^{-f(x)}\cdot \e^{-f(y)}=h(x)\cdot h(y)=h(x\mdot y)$,\\
$1/g(x\mdot y)=1/\bigl(g(x)\cdot g(y)\bigr)=\bigl(1/(g(x))\cdot(1/g(y)\bigr)=h(x)\cdot h(y)=h(x\mdot y)$.\vspace{-4mm}\\

The assertions also apply to subgroups, as these are closed.
\vspace*{-5mm}\\
\end{proof}
\ \\

In Corollary~\refs{coro_metric1}, we have summarised the main results of Section~\ref{MO} regarding the pseudometrics on $\mD_n$ induced by arithmetic functions. In the following, we focus on the generation of pseudometrics or metrics on $\mD_n$ for a given $n\in\mN$ using admissible arithmetic functions. This can be done in different ways.

In all cases, the known additive combination of function values according to\lb Corollary~\ref{coro_metric1} can be applied. In the case of admissible interval-bounded multiplicative functions, however, a multiplicative combination can also lead to pseudometrics. To point this out, we introduce a special notation for metrics derived from admissible arithmetic functions.

\begin{theo} \label{coro_metric2}
Let $n\in\mN>1$,  $x,y\in\mZ$, $f\in\msA$, $g\in\msM$, and $h\in\msI$. Then
\vspace*{-3mm}
\begin{align*}
d^{\around{n}}_{(f,+)}(x,y)&=\begin{cases}
   0 &\hspace*{7mm}\text{for $x=y$,}\\
   f\bigl(\gcd(x,n)\bigr)+f\bigl(\gcd(y,n)\bigr) &\hspace*{7mm}\text{for $x\ne y$}, \end{cases}\\
d^{\around{n}}_{(g,+)}(x,y)&=\begin{cases}
   0 &\text{for $x=y$,}\\
   g\bigl(\gcd(x,n)\bigr)+g\bigl(\gcd(y,n)\bigr)-2 &\text{for $x\ne y$, \quad and} \end{cases}\\
d^{\around{n}}_{(h,+)}(x,y)&=\begin{cases}
   0 &\text{for $x=y$,}\\
   2-h\bigl(\gcd(x,n)\bigr)-h\bigl(\gcd(y,n)\bigr) &\text{for $x\ne y$} \end{cases}
\end{align*} \vspace*{-4mm}\\
are pseudometrics on $\mZ$, so also on $\mD_n$.

If $f\in\msA_0$, $g\in\msM_1$, and $h\in\msI_1$ then they are metrics on $\mD_n$.
\end{theo}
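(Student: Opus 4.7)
The plan is to reduce each of the three cases to Corollary~\refs{coro_metric1} by exhibiting, in each case, an auxiliary function in $\msS_0$ (or $\msS$) that produces the desired distance function. The unifying observation is that all three formulas have the shape $\tilde f\bigl(\gcd(x,n)\bigr)+\tilde f\bigl(\gcd(y,n)\bigr)$ for $x\ne y$, for an appropriate non-negative function $\tilde f$ with $\tilde f(1)=0$; once this is established, Corollary~\refs{coro_metric1} delivers the pseudometric/metric property on $\mZ$ and on $\mD_n$ for free.

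For the additive case $f\in\msA_0$, I would simply note that every additive arithmetic function satisfies $f(1)=0$, and admissibility gives $f(x)\ge0$ for $x>1$. Hence $\msA_0\subseteq\msS_0$ (in fact this inclusion is explicitly noted right after Definition~\refs{admissible}), and $d^{\around{n}}_{(f,+)}$ is exactly the pseudometric from Corollary~\refs{coro_metric1}. If moreover $f\in\msA$, then $f(x)>0$ for $x>1$, so $f\in\msS$, and Corollary~\refs{coro_metric1} yields a metric on $\mD_n$.

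For the multiplicative case $g\in\msM_0$, I would set $\tilde f(x)=g(x)-1$. Since $g(1)=1$ we have $\tilde f(1)=0$, and since $g(x)\ge1$ for $x>1$ we have $\tilde f(x)\ge0$, so $\tilde f\in\msS_0$. Rewriting $d^{\around{n}}_{(g,+)}(x,y)=\tilde f\bigl(\gcd(x,n)\bigr)+\tilde f\bigl(\gcd(y,n)\bigr)$ for $x\ne y$ lets me apply Corollary~\refs{coro_metric1}. If $g\in\msM$, then $g(x)>1$ for $x>1$, hence $\tilde f\in\msS$ and the resulting function is a metric on $\mD_n$. Analogously, in the inverse multiplicative case $h\in\msI_0$, I would set $\tilde f(x)=1-h(x)$; then $\tilde f(1)=0$ and $0\le\tilde f(x)\le1$ for $x>1$, so $\tilde f\in\msS_0$, and $d^{\around{n}}_{(h,+)}(x,y)=\tilde f\bigl(\gcd(x,n)\bigr)+\tilde f\bigl(\gcd(y,n)\bigr)$ for $x\ne y$; if $h\in\msI_2$, the inequality $h(x)<1$ forces $\tilde f\in\msS$, giving the metric claim on $\mD_n$.

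There is no real obstacle in this proof; it is a bookkeeping exercise. The only point to be careful about is that in the second and third formulas the constant ``$2$'' is not an afterthought but is exactly what cancels $g(1)+g(1)=2$ or $-h(1)-h(1)=-2$ so that the rewriting into $\tilde f$-form works, and that Corollary~\refs{coro_metric1} does not guarantee a metric on all of $\mZ$, only on $\mD_n$ — consistent with the remark after Lemma~\refs{extended_pseudometric} that the periodicity of $d^{\around{n}}$ typically prevents metric status on $\mZ$.
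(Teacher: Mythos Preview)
Your proposal is correct and matches the paper's own proof essentially verbatim: the paper simply says that with the transformations $\tilde f=g-1$ and $\tilde f=1-h$ the statements reduce to Corollary~\ref{coro_metric1}, which is precisely what you do (and you spell out the $\msA_0\subseteq\msS_0$ case as well). If anything, your version is more detailed than the paper's one-line argument.
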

\begin{proof}
With the transformations $f=g-1$ or $f=1-h$, respectively, the statements are included in those of Corollary~\refs{coro_metric1}. Metric properties are inherited by subsets.
\end{proof}

A special multiplicative combination of the values of functions $h\in\msI$ turns out\lb to be another way of generating pseudometrics on $\mZ$. This approach does not work with functions of $\msA$ or $\msM$.

\begin{theo} \label{coro_metric3}
Let $n\in\mN>1$,  $x,y\in\mZ$, and $h\in\msI$. Then
\vspace*{-3mm}
\begin{align*}
d^{\around{n}}_{(h,\,\cdot\,)}(x,y)&=\begin{cases}
   0 &\text{for $x=y$,}\\
   1-h\bigl(\gcd(x,n)\bigr)\cdot h\bigl(\gcd(y,n)\bigr) &\text{for $x\ne y$} \end{cases}
\end{align*} \vspace*{-4mm}\\
is a pseudometric on $\mZ$, so also on $\mD_n$.

If $h\in\msI_1$ then $d^{\around{n}}_{(h,\,\cdot\,)}$ is a metric on $\mD_n$.
\end{theo}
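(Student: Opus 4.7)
The plan is to verify, in turn, the three pseudometric axioms for the proposed $d^{\around{n}}_{(h,\,\cdot\,)}$, and then establish the identity of indiscernibles on $\mD_n$ under the stronger hypothesis $h\in\msI_2$. Throughout the argument it is useful to abbreviate $a=h(\gcd(x,n))$, $b=h(\gcd(y,n))$, $c=h(\gcd(z,n))$; since $h\in\msI_0$ means $0\le h(m)\le 1$ for every $m\in\mN$, the values $a,b,c$ lie in $[0,1]$ and the nonzero branch $1-ab$ is automatically in $[0,1]$, so the function is well-defined and nonnegative. Identity is built into the piecewise definition, and symmetry is immediate from the commutativity of multiplication.

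The main step, and essentially the only nontrivial one, is the triangle inequality $d^{\around{n}}_{(h,\,\cdot\,)}(x,z)\le d^{\around{n}}_{(h,\,\cdot\,)}(x,y)+d^{\around{n}}_{(h,\,\cdot\,)}(y,z)$. I would dispatch the degenerate cases where two of $x,y,z$ coincide by direct inspection: each reduces to a comparison of the form $r\le r$, $r\le r+0$, or $0\le 2(1-ab)$, all of which are clear. In the generic case where $x,y,z$ are pairwise distinct, the inequality unfolds to
\[1-ac\ \le\ (1-ab)+(1-bc),\]
i.e.\ $ab+bc\le 1+ac$. Since $b\le 1$ gives $ab+bc\le a+c$, it suffices to observe that
\[a+c-ac\ =\ 1-(1-a)(1-c)\ \le\ 1,\]
which holds because $(1-a)(1-c)\ge 0$ on $[0,1]^2$. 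This one-line estimate is the real content of the theorem: it is exactly the interplay between the multiplicative combination $1-h\cdot h$ and the range restriction $0\le h\le 1$ that makes the triangle inequality work, which is why the construction is formulated only for $\msI_0$ and not for $\msA_0$ or $\msM_0$.

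For the final assertion, assume $h\in\msI_2$ and let $x,y\in\mD_n$ with $d^{\around{n}}_{(h,\,\cdot\,)}(x,y)=0$. Since $x,y\in\mD_n$ we have $\gcd(x,n)=x$ and $\gcd(y,n)=y$. If $x\ne y$, then at least one of them, say $x$, exceeds $1$, and the definition of $\msI_2$ forces $h(x)<1$; combined with $h(y)\le 1$ this yields $h(x)\,h(y)<1$, so $d^{\around{n}}_{(h,\,\cdot\,)}(x,y)=1-h(x)\,h(y)>0$, contradicting the assumption. Hence $x=y$, establishing that $d^{\around{n}}_{(h,\,\cdot\,)}$ is a metric on $\mD_n$ in this case. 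The only subtlety worth pointing out is that identity of indiscernibles cannot be expected to extend to all of $\mZ$, because the $\gcd$-extension collapses every residue class modulo $n$ to a single value, in line with the remarks following Lemma~\ref{extended_pseudometric}.
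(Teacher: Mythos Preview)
Your proof is correct and follows essentially the same approach as the paper's own argument. Both proofs reduce the triangle inequality to the observation that $b\le 1$ gives $b(a+c)\le a+c$, and then use the factorisation $1+ac-a-c=(1-a)(1-c)\ge 0$; the paper writes this as a single chain $1+ac-b(a+c)\ge 1+ac-(a+c)=(a-1)(c-1)\ge 0$, while you split it into two steps and also make the degenerate cases explicit, but the substance is identical.
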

\begin{proof}
Identity and symmetry follow by construction. The triangle inequality remains to prove. W.l.o.g. let $x,y,z\in\mD_n$, i.e. $x,y,z\le n$.
\vspace*{-3mm}
\begin{align*}
d^{\around{n}}_{(h,\,\cdot\,)}(x,y)+d^{\around{n}}_{(h,\,\cdot\,)}(y,z)&-d^{\around{n}}_{(h,\,\cdot\,)}(x,z)=\\
&=1-h(x)\cdot h(y)+1-h(y)\cdot h(z)-\bigl(1-h(x)\cdot h(z)\bigr)\\
&=1+h(x)\cdot h(z)-h(x)\cdot h(y)-h(y)\cdot h(z)\\
&=1+h(x)\cdot h(z)-h(y)\cdot\bigl(h(x)+h(z)\bigr)\\
&\ge1+h(x)\cdot h(z)-\bigl(h(x)+h(z)\bigr)\\
&=1+h(x)\cdot\bigl(h(z)-1\bigr)-h(z)\\
&=\bigl(h(x)-1\bigr)\cdot\bigl(h(z)-1\bigr)\ge0
\end{align*} \vspace*{-8mm}\\
If $h\in\msI_1$ and $x\ne y$ then
$d^{\around{n}}_{(h,\,\cdot\,)}(x,y)=0\iff1=h(x)\cdot h(y)\iff h(x)=h(y)=1$, 
in contradiction to the domain of $\msI_1$.
\end{proof}

We want to emphasise that there exist pseudometrics on $\mD_n$ that cannot be\lb generated by admissible arithmetic functions. The respective generating functions form a strict subset of $\msG$. We give an example for $n=6$ and define $f(x)=x-1$. Then $f\in\msG$ and 
\[d(x,y)=\begin{cases}
   0 &\text{for $x=y$, \quad and}\\
   f(x)+f(y) &\text{for $x\ne y$} \end{cases}
\]\vspace*{-1mm}\\
is a pseudometric on $\in\mD_n$ according to Corollary~\refs{coro_metric1}. However, $f$ is the unique generating function because of Lemma~\refs{centred_pseudometric}, but it is neither multiplicative because $f(1)=0$ nor additive because $f(6)=5\ \ne\ f(2)+(3)=1+2=4$.

In Definition~\refs{admissible}, we introduced sets of admissible arithmetic functions. These sets are disjoint apart from the multiplicative identity function $\mathit{1}$. So, $\msA$, $\msM_1$, and $\msI_1$\lb are pairwise disjoint. Given $n\in\mN$, admissible arithmetic functions were used to generate four sets of pseudometrics on $\mZ$ according to Theorems~\ref{coro_metric2}~and~\refs{coro_metric3}. The corresponding sets of pseudometrics also do not overlap if $n$ has at least two prime divisors. This applies in particular to all included metrics because $\msA_0\subset\msA$.\\[-3mm]

\begin{prop} \label{disjoint_2}
Let $n\in\mN$ have at least two prime divisors. Then, the following four sets\lb of pseudometrics are pairwise disjoint on $\mD_n$.
\vspace*{-2mm}
\begin{align*}
\msD^{\!\around{n}}_{\!(\msA,+)}&=\bigl\{ d^{\around{n}}_{(f,+)} : f\in\msA \bigr\}, \\
\msD^{\!\around{n}}_{\!(\msM_1,+)}&=\bigl\{ d^{\around{n}}_{(g,+)} : g\in\msM_1 \bigr\}, \\
\msD^{\!\around{n}}_{\!(\msI_1,+)}&=\bigl\{ d^{\around{n}}_{(h,+)} : h\in\msI_1 \bigr\}, \\
\msD^{\!\around{n}}_{\!(\msI_1,\,\cdot)}&=\bigl\{ d^{\around{n}}_{(h,\,\cdot)} : h\in\msI_1 \bigr\}.
\end{align*}
\end{prop}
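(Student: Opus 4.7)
The plan is to exhibit, for any hypothetical overlap between two of the four sets, a concrete pair of divisors of $n$ on which the two generating recipes impose contradictory constraints. Since $n$ has at least two distinct prime divisors by hypothesis, fix such primes $p,q\in\mP$ with $p,q\mdiv n$, so that $p,q,pq\in\mD_n$ and $p\copr q$. For each pseudometric $d$ in one of the four sets, set $\varphi_d(x)=d(1,x)$ on $\mD_n$; a direct inspection of the defining formulas shows that $\varphi_d$ equals $f$, $g-1$, or $1-h$, respectively, and that for $x\ne y$ in $\mD_n$ the three sets $\msD^{\!\around{n}}_{\!(\msA_0,+)}$, $\msD^{\!\around{n}}_{\!(\msM,+)}$, $\msD^{\!\around{n}}_{\!(\msI_2,+)}$ all obey the common law $d(x,y)=\varphi_d(x)+\varphi_d(y)$, whereas for $d=d^{\around{n}}_{(h,\cdot)}$ one computes $d(x,y)=\varphi_d(x)+\varphi_d(y)-\varphi_d(x)\varphi_d(y)$.

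The first step is to separate $\msD^{\!\around{n}}_{\!(\msI_2,\cdot)}$ from each of the other three sets. If a pseudometric $d$ lay in the intersection, the value $\varphi_d$ would be forced to be the same function on $\mD_n$, so equating the two distance laws at $(x,y)=(p,q)$ would yield $\varphi_d(p)\varphi_d(q)=0$. Since $h\in\msI_2$ gives $\varphi_d(p)=1-h(p)>0$ and $\varphi_d(q)=1-h(q)>0$, this is a contradiction.

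For the pairwise separation among $\msD^{\!\around{n}}_{\!(\msA_0,+)}$, $\msD^{\!\around{n}}_{\!(\msM,+)}$, and $\msD^{\!\around{n}}_{\!(\msI_2,+)}$, I would appeal instead to $\varphi_d(pq)$. Using $p\copr q$ together with the additivity of $f$ or the multiplicativity of $g$ or $h$, this value equals respectively $\varphi_d(p)+\varphi_d(q)$, $\varphi_d(p)+\varphi_d(q)+\varphi_d(p)\varphi_d(q)$, or $\varphi_d(p)+\varphi_d(q)-\varphi_d(p)\varphi_d(q)$. Equating any two of these three expressions again yields $\varphi_d(p)\varphi_d(q)=0$, which is incompatible with the strict positivity supplied either by $g\in\msM$ (giving $g(p),g(q)>1$) or by $h\in\msI_2$ (giving $h(p),h(q)<1$); each of the three overlapping pairs contains at least one such side, so each pair is ruled out.

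The hardest part of the argument is recognising that $d^{\around{n}}_{(h,+)}$ and $d^{\around{n}}_{(h,\cdot)}$ yield the same generating function $\varphi_d$ and even the same value $d(1,pq)$, so the separation of these two sets cannot be accomplished by distances to $1$ alone; it must invoke a distance between two non-trivial divisors. This is exactly why the hypothesis that $n$ has at least two distinct prime divisors is essential, as it guarantees the existence of a coprime pair $p,q\in\mD_n\setminus\{1\}$ on which the cross term $\varphi_d(p)\varphi_d(q)$ is strictly positive and produces the required contradiction.
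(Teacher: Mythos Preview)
Your argument is correct and follows essentially the same route as the paper: pick coprime $p,q\in\mD_n$, identify the generating function $\varphi_d(x)=d(1,x)$ with $f$, $g-1$, or $1-h$ respectively, and in each pairwise comparison force $\varphi_d(p)\varphi_d(q)=0$, contradicting the strict positivity supplied by membership in $\msM$ or $\msI_2$. Your packaging via the two distance laws $d(x,y)=\varphi_d(x)+\varphi_d(y)$ versus $d(x,y)=\varphi_d(x)+\varphi_d(y)-\varphi_d(x)\varphi_d(y)$ is a bit more uniform than the paper's six separate case computations, and your observation that the $(\msI_2,+)$ versus $(\msI_2,\cdot)$ separation genuinely requires a distance $d(p,q)$ between two non-trivial divisors (rather than a distance to $1$) is exactly the point the paper exploits in that final case.
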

\vspace*{1mm}
\begin{proof}
Let  $x,y\in\mD_n$, $f\in\msA$, $g\in\msM_1$, and $h\in\msI_1$.\\
By assumption, there exist $x,y\in\mD_n>1$ with $\gcd(x,y)=1$ and $x\cdot y\in\mD_n$.

Moreover, we have\\
$d^{\around{n}}_{(f,+)}(1,x)=f(x)$, \quad$d^{\around{n}}_{(g,+)}(1,x)=g(x)-1$, \quad$d^{\around{n}}_{(h,+)}(1,x)=1-h(x)$, \quad and\\
$d^{\around{n}}_{(h,\,\cdot)}(1,x)=1-h(x)$. For $x>1$, we get $g(x)-1>0$ and $1-h(x)>0$.\\[-0.5ex]

We prove the disjointness separately assuming a common element.\\[-0.5ex]

$\msD^{\!\around{n}}_{\!(\msA,+)} \cap \msD^{\!\around{n}}_{\!(\msM_1,+)} = \emptyset$.\\
Suppose there are $f\in\msA$ and $g\in\msM_1$ such that $d^{\around{n}}_{(f,+)}=d^{\around{n}}_{(g,+)}$. So also\\
$d^{\around{n}}_{(f,+)}(1,x)=f(x)=g(x)-1=d^{\around{n}}_{(g,+)}(1,x)$ and $f(x)=g(x)-1$.\\
Then $f(x\cdot y)=g(x\cdot y)-1=g(x)\cdot g(y)-1$, \quad and\\[0.5ex]
$f(x\cdot y)=f(x)+f(y)=g(x)+g(y)-2=g(x\cdot y)-1-\bigl(g(x)-1\bigr)\cdot\bigl(g(y)-1\bigr)$.\\[0.5ex]
Therefore, $\bigl(g(x)-1\bigr)\cdot\bigl(g(y)-1\bigr)=0$, \ i.e. $x=1$ or $y=1$ in contradiction to $x,y>1$.\\[-0.5ex]

$\msD^{\!\around{n}}_{\!(\msA,+)} \cap \msD^{\!\around{n}}_{\!(\msI_1,+)} = \emptyset$.\\
From $d^{\around{n}}_{(f,+)}(1,x)=f(x)=1-h(x)=d^{\around{n}}_{(h,\,\cdot)}(1,x)$ follows $f(x)=1-h(x)$.\\
Then $f(x\cdot y)=1-h(x\cdot y)=1-h(x)\cdot h(y)$, \quad and also\\[0.5ex]
$f(x\cdot y)=f(x)+f(y)=2-h(x)-h(y)=\bigl(1-h(x)\bigr)\cdot \bigl(1-h(y)\bigr)+1-h(x)\cdot h(y)$.\\[0.5ex]
So, \ $\bigl(1-h(x)\bigr)\cdot \bigl(1-h(y)\bigr)=0$ in contradiction to $x,y>1$.\\[-0.5ex]

$\msD^{\!\around{n}}_{\!(\msA,+)} \cap \msD^{\!\around{n}}_{\!(\msI_1,\,\cdot)} = \emptyset$.\\
Analogous to the last case, since $\msD^{\!\around{n}}_{\!(\msI_1,\,\cdot)}(1,x)=\msD^{\!\around{n}}_{\!(\msI_1,+)}(1,x)$. So, we get again\\
$f(x,y)=1-h(x)\cdot h(y)=2-h(x)-h(y)$, \\
and finally $\bigl(1-h(x)\bigr)\cdot \bigl(1-h(y)\bigr)=0$, contradiction.\\[-0.5ex]

$\msD^{\!\around{n}}_{\!(\msM_1,+)} \cap \msD^{\!\around{n}}_{\!(\msI_1,+)} = \emptyset$.\\
From $d^{\around{n}}_{(g,+)}(1,x)=g(x)-1=1-h(x)=d^{\around{n}}_{(h,+)}(1,x)$ follows $g(x)=2-h(x)$.\\
Then $g(x\cdot y)=2-h(x\cdot y)=2-h(x)\cdot h(y)$, \quad and also\\[0.5ex]
$g(x\cdot y)=g(x)\cdot g(y)=\bigl(2-h(x)\bigr)\cdot \bigl(2-h(y)\bigr)=4-2\cdot h(x)-2\cdot h(y)+h(x)\cdot h(y)$.\\[0.5ex]
So, \ $2-h(x)\cdot h(y)=4-2\cdot h(x)-2\cdot h(y)+h(x)\cdot h(y)$ \ or \ $h(x)+h(y)=1+h(x\cdot y)$,\linebreak\\[-2ex]
i.e. $\bigl(1-h(x)\bigr)\cdot \bigl(1-h(y)\bigr)=0$, contradiction.\\[-0.5ex]

$\msD^{\!\around{n}}_{\!(\msM_1,+)} \cap \msD^{\!\around{n}}_{\!(\msI_1,\,\cdot)} = \emptyset$.\\
Analogous to the last case, since $\msD^{\!\around{n}}_{\!(\msI,\,\cdot)}(1,x)=\msD^{\!\around{n}}_{\!(\msI,+)}(1,x)$. So, we get again\\
$2-h(x)\cdot h(y)=g(x\cdot y)=4-2\cdot h(x)-2\cdot h(y)+h(x)\cdot h(y)$,\\[0.5ex]
and finally $\bigl(1-h(x)\bigr)\cdot \bigl(1-h(y)\bigr)=0$, contradiction.\\[-0.5ex]

$\msD^{\!\around{n}}_{\!(\msI_1,+)} \cap \msD^{\!\around{n}}_{\!(\msI_1,\,\cdot)} = \emptyset$.\\
$d^{\around{n}}_{(h,+)}(x,y)=2-h(x)-h(y)=1-h(x)\cdot h(y)=d^{\around{n}}_{(h,\,\cdot)}(x,y)$ \\
also results in the requirement $\bigl(1-h(x)\bigr)\cdot \bigl(1-h(y)\bigr)=0$, contradiction.
\end{proof}

\vspace*{1mm}
Although the sets of pseudometrics considered above are disjoint, the morphisms of Proposition~\ref{iso-binary} between $\msA$, $\msM$, and $\msI_0$ preserving the binary operations can be\lb applied to $\msD^{\!\around{n}}_{\!(\msA,+)}$, $\msD^{\!\around{n}}_{\!(\msM_1,+)}$, $\msD^{\!\around{n}}_{\!(\msI_0,+)}$, and $\msD^{\!\around{n}}_{\!(\msI_0,\,\cdot)}$ because of the canonical assignments from the respective arithmetic function to the corresponding pseudometric.\\ A semigroup-structure can be transferred in this way.

However, for admissible additive arithmetic functions, the canonical assignment constitutes a natural homomorphism. This does not apply to multiplicative functions because of the other binary operation.\\[-3mm]

\begin{lemm} \label{homomorphic}
Let $n\in\mN>1$. Then $f\in\msA \to d^{\around{n}}_{(f,+)} \in \msD^{\!\around{n}}_{\!(\msA,+)}$ represents a semigroup homomorphism between $(\msA,+)$ and $\bigl(\msD^{\!\around{n}}_{\!(\msA,+)},+\bigr)$.
\end{lemm}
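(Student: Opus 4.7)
The plan is to verify directly that the canonical assignment $\Phi : \msA_0 \to \msD^{\!\around{n}}_{\!(\msA_0,+)}$ defined by $\Phi(f) = d^{\around{n}}_{(f,+)}$ respects the pointwise sum of admissible additive functions on the domain side and the pseudometric sum on the target side. Closure of $(\msA_0,+)$ under pointwise addition has already been established in the monoid lemma, and the binary operation on pseudometrics introduced at the start of Section~\ref{MO} (there called \enquote{$\circ$}, defined by $(d_1 \circ d_2)(x,y) = d_1(x,y) + d_2(x,y)$) automatically restricts to the subset $\msD^{\!\around{n}}_{\!(\msA_0,+)}$ once the homomorphism identity is proved, so the main remaining task is exactly to verify
\[
\Phi(f_1 + f_2) \;=\; \Phi(f_1) + \Phi(f_2)
\qquad\text{for all } f_1, f_2 \in \msA_0.
\]

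Concretely, I would fix $f_1, f_2 \in \msA_0$ and arbitrary $x,y \in \mZ$, and split into the two cases appearing in the piecewise definition of $d^{\around{n}}_{(f,+)}$. For $x = y$ both sides evaluate to $0$ by definition. For $x \neq y$, I would expand the left-hand side using the definition of pointwise addition of arithmetic functions,
\[
(f_1 + f_2)\bigl(\gcd(x,n)\bigr) + (f_1 + f_2)\bigl(\gcd(y,n)\bigr),
\]
and then regroup the four resulting real numbers into
\[
\bigl[f_1\bigl(\gcd(x,n)\bigr) + f_1\bigl(\gcd(y,n)\bigr)\bigr]
+ \bigl[f_2\bigl(\gcd(x,n)\bigr) + f_2\bigl(\gcd(y,n)\bigr)\bigr],
\]
which is exactly $d^{\around{n}}_{(f_1,+)}(x,y) + d^{\around{n}}_{(f_2,+)}(x,y)$. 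Since both sides agree at every argument pair, the pseudometrics are equal as elements of $\msD^{\!\around{n}}_{\!(\msA_0,+)}$.

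There is essentially no obstacle here; the entire argument reduces to commutativity and associativity of ordinary addition in $\mR$ together with unwrapping the definitions of $d^{\around{n}}_{(f,+)}$ and of pointwise sum. The only subtlety worth stating explicitly is that the \enquote{$+$} appearing on both sides of the homomorphism identity refers to two a priori different operations — pointwise sum of functions on the left, and the pseudometric sum from Section~\ref{MO} on the right — and the lemma amounts exactly to the statement that $\Phi$ intertwines these two operations. Once this is noted, the verification above gives the claim, and closure of $\msD^{\!\around{n}}_{\!(\msA_0,+)}$ under \enquote{$+$} follows as a byproduct because the sum of two maps of the form $d^{\around{n}}_{(f_1,+)}$ and $d^{\around{n}}_{(f_2,+)}$ is exhibited as $d^{\around{n}}_{(f_1 + f_2,+)}$ with $f_1 + f_2 \in \msA_0$.
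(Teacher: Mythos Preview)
Your proposal is correct and follows essentially the same route as the paper's proof: unwrap the definition of $d^{\around{n}}_{(f,+)}$, use $(f_1+f_2)(z)=f_1(z)+f_2(z)$, and regroup the four summands. The only cosmetic differences are that the paper restricts to $x,y\in\mD_n$ (which suffices since the extended pseudometric is determined there), while you work on all of $\mZ$ and explicitly separate the diagonal case $x=y$; neither changes the argument.
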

\ \\
\begin{proof}
Let $x,y\in\mD_n$ and $f_1,f_2\in\msA$.\\[0.5ex]
We get $(f_1+f_2)(x)=f_1(x)+f_2(x)$ as well as $d^{\around{n}}_{(f_1,+)}(x,y)=f_1(x)+f_1(y)$ and\lb analogously $d^{\around{n}}_{(f_2,+)}(x,y)=f_2(x)+f_2(y)$ by definition. Therefore,
\vspace*{-3mm}
\begin{align*}
d^{\around{n}}_{(f_1+f_2,+)}(x,y)&=(f_1+f_2)(x)+(f_1+f_2)(y) \\
&=f_1(x)+f_2(x)+f_1(y)+f_2(y) = d^{\around{n}}_{(f_1,+)}(x,y)+d^{\around{n}}_{(f_2,+)}(x,y).
\end{align*}
\vspace*{-14mm}\\
\end{proof}
\ \\\\

Finally, we add another option to the list of how pseudometrics can be generated from admissible arithmetic functions. Since the focus should be on divisibility\lb properties, we reduce the distances generated from functions of $\msA$ and $\msI$ to that of coprime arguments. This also leads to $1$-centred pseudometrics as before, but the distances between large integers can become smaller if they have common divisors. Proposition~\ref{disjoint_2}~and~Lemma~\ref{homomorphic} can be transferred to them accordingly.

Further exploration of these highly interesting divisibility metrics is beyond the scope of this paper. The derived preorders, relations and partitions remain the same as those of the pseudometrics defined above. All of them are solely determined by the distances from $1$.

\vspace*{6mm}
\begin{theo} \label{NTF_metric}
Let $n\in\mN>1$,  $x,y\in\mZ$, $f\in\msA$, and $h\in\msI$. Then
\vspace*{-3mm}
\begin{align*}
\delta^{\around{n}}_{(f,+)}(x,y)&=\begin{cases}
   0 &\hspace*{7mm}\text{for $x=y$,}\\
   f\left(\fcn xxy\right)+f\left(\fcn yxy\right) &\hspace*{7mm}\text{for $x\ne y$,} \end{cases}\\
\delta^{\around{n}}_{(h,+)}(x,y)&=\begin{cases}
   0 &\text{for $x=y$,}\\
   2-h\left(\fcn xxy\right)-h\left(\fcn yxy\right) &\text{for $x\ne y$, \quad and} \end{cases}\\
\delta^{\around{n}}_{(h,\,\cdot\,)}\hspace{0.6mm}(x,y)&=\begin{cases}
   0 &\hspace*{2mm}\text{for $x=y$,}\\
   1-h\left(\fcn xxy\right)\cdot h\left(\fcn yxy\right) &\hspace*{2mm}\text{for $x\ne y$} \end{cases}
\end{align*} \vspace*{-3mm}\\
are pseudometrics on $\mZ$, so also on $\mD_n$.

If $f\in\msA_0$ and $h\in\msI_1$ then they are metrics on $\mD_n$.
\end{theo}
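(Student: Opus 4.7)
The three distance functions are handled in parallel, following the pattern of the proofs of Theorems~\refs{coro_metric2} and~\refs{coro_metric3}. Identity and symmetry are immediate from the symmetric way in which $x$ and $y$ enter each formula and from the $x=y$ branch, so the substance of the proof lies in the triangle inequality and, under the stronger hypotheses $f\in\msA$ or $h\in\msI_2$, in the identity of indiscernibles.

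The first step is to reduce to $x,y,z\in\mD_n$. Each of the three formulas depends on its arguments only through $\gcd(x,n)$, $\gcd(y,n)$, and $\gcd(x,y,n)$, so the substitution $x\mapsto\gcd(x,n)$ leaves the distances unchanged and a triangle inequality on $\mD_n$ transports to $\mZ$. For $x,y\in\mD_n$ one has $\gcd(x,n)=x$ and $\gcd(x,y,n)=\gcd(x,y)$, so the two arguments $\alpha_{xy}:=x/\gcd(x,y)$ and $\beta_{xy}:=y/\gcd(x,y)$ both lie in $\mD_n$. The key algebraic fact that distinguishes the $\delta$-pseudometrics from the $d$-pseudometrics of Theorems~\refs{coro_metric2} and~\refs{coro_metric3} is that $\gcd(\alpha_{xy},\beta_{xy})=1$. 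This lets me invoke additivity of $f$ and multiplicativity of $h$ on coprime arguments to collapse the two-term expressions to a single term: $f(\alpha_{xy})+f(\beta_{xy})=f(\lcm(x,y)/\gcd(x,y))$ and $h(\alpha_{xy})\cdot h(\beta_{xy})=h(\lcm(x,y)/\gcd(x,y))$.

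The core of the argument is the triangle inequality. My plan is to work prime-by-prime: writing $x,y,z$ as products of prime powers, the exponent of a prime $p$ in $\lcm(x,y)/\gcd(x,y)$ is $|v_p(x)-v_p(y)|$, and the classical inequality $|v_p(x)-v_p(z)|\le|v_p(x)-v_p(y)|+|v_p(y)-v_p(z)|$ holds for every $p$. Since the prime-power factors of these ``symmetric-difference'' divisors are pairwise coprime, additivity of $f$ (respectively multiplicativity of $h$) across distinct primes should propagate the per-prime inequalities to the global triangle inequality for $\delta^{\around{n}}_{(f,+)}$ and $\delta^{\around{n}}_{(h,+)}$. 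For $\delta^{\around{n}}_{(h,\,\cdot\,)}$ the calculation should parallel the one in the proof of Theorem~\refs{coro_metric3}, with the roles of $h(x)$, $h(y)$, $h(z)$ played by $h$ evaluated at the appropriate $\lcm/\gcd$ divisors, and should end with a manifestly non-negative product of factors of the form $(1-h(\cdot))$.

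The step that will need the most care is the per-prime inequality, which in the additive case takes the shape $f(p^a)\le f(p^b)+f(p^c)$ with $a\le b+c$. This is the main obstacle, because it is not a direct consequence of additivity alone and has to be extracted from the constraint that $(a,b,c)$ arises as absolute differences $|A-C|,|A-B|,|B-C|$ of a common triple of non-negative exponents; the multiplicative analogue for $h$ is even more delicate because the product structure does not decouple primes in the same additive way. Finally, the identity of indiscernibles is short in each case: if $f\in\msA$ then $f(m)>0$ for $m>1$, so $\delta^{\around{n}}_{(f,+)}(x,y)=0$ forces $\alpha_{xy}=\beta_{xy}=1$, i.e.\ $x=y=\gcd(x,y)$ in $\mD_n$, and the parallel arguments with $h\in\msI_2$ (using $h(m)<1$ for $m>1$) dispose of the remaining two formulas.
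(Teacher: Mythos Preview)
Your per-prime strategy cannot be completed, and the obstacle you flag is in fact fatal: the inequality $f(p^{|A-C|})\le f(p^{|A-B|})+f(p^{|B-C|})$ is \emph{not} forced by the absolute-difference constraint. For an additive $f\in\msA_0$ the values $f(p),f(p^2),f(p^3),\dots$ are completely independent non-negative reals, so with $(A,B,C)=(0,1,2)$ one would need $f(p^2)\le 2f(p)$, which fails for, say, $f(p)=1$ and $f(p^2)=100$. This yields a concrete counterexample to the triangle inequality on $\mD_n$: with $n=4$, $x=1$, $y=2$, $z=4$ one has
\[
\delta^{\around{4}}_{(f,+)}(1,2)=\delta^{\around{4}}_{(f,+)}(2,4)=f(2)=1,\qquad
\delta^{\around{4}}_{(f,+)}(1,4)=f(4)=100.
\]
The same triple with $h(2)=0.9$, $h(4)=0$ breaks $\delta^{\around{4}}_{(h,+)}$ and $\delta^{\around{4}}_{(h,\,\cdot\,)}$ as well (both give $0.1+0.1<1$). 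So the difficulty you isolate is not a technicality to be ``extracted'' but a genuine failure of the claimed inequality.

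For comparison, the paper does not argue prime by prime; it writes $x,y,z\in\mD_n$ as products of seven factors $f_x,f_y,f_z,f_{xy},f_{xz},f_{yz},f_{xyz}$, asserts that these are pairwise coprime, and then uses additivity/multiplicativity on coprime arguments to regroup $\delta(x,y)+\delta(y,z)$ as $\delta(x,z)$ plus a non-negative remainder. That route shares the same defect: the pairwise-coprimality claim is false in general. With $x=1$, $y=2$, $z=4$ one computes $f_{yz}=\gcd(2,4)/\gcd(1,2,4)=2$ and $f_z=4\cdot\gcd(1,2,4)/(\gcd(1,4)\gcd(2,4))=2$, so $f_{yz}$ and $f_z$ are not coprime, and the step that merges $f\bigl(y/\gcd(x,y)\bigr)+f\bigl(z/\gcd(y,z)\bigr)=f(2)+f(2)$ into $f(4)$ is illegitimate. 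Thus neither your approach nor the paper's can establish the triangle inequality in the stated generality; the statement holds, for instance, when $n$ is squarefree (where the seven-factor decomposition is genuinely coprime and your per-prime exponents are all $0$ or $1$), but not for arbitrary $n$.
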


\vspace*{3mm}
\begin{proof}
In all cases, identity and symmetry follow by construction. We prove the three triangle inequalities on $\mD_n$ separately. The corresponding extensions of the\lb pseudometrics to pseudometrics on $\mZ$ result according to Lemma~\refs{extended_pseudometric}.
\pagebreak

Let $x,y,z\in\mZ$. We decompose  $x$, $y$, and $z$ into seven coprime integers as follows.
{\small
\begin{align*}
f_{xyz}&=\gcd(x,y,z) \hspace{47.3mm}\text{, i.e. }f_{xyz}\mdiv x \land f_{xyz}\mdiv y \land f_{xyz}\mdiv z.\\
f_{xy}&=\frac{\gcd(x,y)}{f_{xyz}}=\frac{\gcd(x,y)}{\gcd(x,y,z)}\hspace{25mm}\text{, i.e. }f_{xy}\mdiv x \land f_{xy}\mdiv y \land f_{xy}\ndiv z,\\
f_{yz},f_{xz} &=\dots\text{ analogously}.\\
f_{x}&=\frac{x}{f_{xy}\cdot f_{xz}\cdot f_{xyz}}=\frac{x\cdot\gcd(x,y,z)}{\gcd(x,y)\cdot\gcd(x,z)}\hspace{4.5mm}\text{, i.e. }f_{x}\mdiv x \land f_{x}\ndiv y \land f_{x}\ndiv z,\hspace{5.6mm}\\
f_{y},f_{z} &=\dots\text{ analogously}.\\
\end{align*} \vspace*{-12mm}\\
}
Thus, $x=f_{x}\cdot f_{xy}\cdot f_{xz}\cdot f_{xyz}$ and accordingly for $y$ and $z$.\\

\vspace*{3mm}
For use in the coming derivations we first prove some general divisibility assertions for integers. \vspace*{-10mm}\\

{\small
\begin{flalign*}
\text{(a)} \hspace*{16mm} \qquad \fcd {\phantom{y}x\phantom{y}}xy &\perp \fcd yyz.\\
\fcd {\phantom{y}x\phantom{y}}xy &=\frac{x}{f_{xy}\cdot f_{xyz}}=f_{x}\cdot f_{xz}  \text{, \quad whereas}\\
\fcd yyz &=\frac{y}{f_{yz}\cdot f_{yxz}}=f_{y}\cdot f_{yx}=f_{y}\cdot f_{xy}. \quad \text{All $f_. $ are pairwise coprime.}&&
\end{flalign*}\vspace*{-13mm}

\begin{flalign*}
\text{(b)} \hspace*{5mm} (\gcd(x,y)\cdot\gcd(y,z))&\ \,\mdiv\ \,(y\cdot\gcd(x,z))\text{, \quad i.e. }\pcd y{\gcd(x,z)}xyyz\in\mZ.\\
y\cdot\gcd(x,z) &=f_{y}\cdot f_{xy}\cdot f_{yz}\cdot f_{xyz}\cdot f_{xz}\cdot f_{xyz}\\
&=f_{xy}\cdot f_{xyz}\cdot f_{yz}\cdot f_{xyz}\ \cdot\ f_{y}\cdot f_{xz}\text{, \quad and}\\
\gcd(x,y)\cdot\gcd(y,z) &=f_{xy}\cdot f_{xyz}\cdot f_{yz}\cdot f_{xyz}.&&
\end{flalign*}\vspace*{-13mm}

\begin{flalign*}
\text{(c)} \hspace*{24mm} \fcd {\phantom{y}x\phantom{y}}xz &\perp \pcd y{\gcd(x,z)}xyyz.\text{\qquad Given (b).}\\
\fcd {\phantom{y}x\phantom{y}}xz &=f_{x}\cdot f_{xy}\text{, \quad whereas}\\
\pcd y{\gcd(x,z)}xyyz &=f_{y}\cdot f_{xz}.&&
\end{flalign*}\vspace*{-11mm}

\begin{flalign*}
\text{(d)} \hspace*{5.4mm} \pcd x{\gcd(y,z)}xyxz &\perp \pcd y{\gcd(x,z)}xyyz.\text{\qquad Given (b).}\\
\pcd x{\gcd(y,z)}xyxz &=f_{x}\cdot f_{yz}\text{, \quad whereas}\\
\pcd y{\gcd(x,z)}xyyz &=f_{y}\cdot f_{xz}.&&
\end{flalign*}\vspace*{-3mm}

Furthermore, we note the following inequality for $h\in\msI$. \vspace*{-11mm}\\

\begin{flalign*}
\text{(e)} \hspace*{13.5mm} 2-h(x)-h(y)&\ge1-h(x)\cdot h(y)\text{\qquad because}\\
2-h(x)-h(y)&\;\!-\;\!\bigl(1-h(x)\cdot h(y)\bigr)=1-h(x)-h(y)+h(x)\cdot h(y)=\\
&\hspace*{33.1mm} =\bigl(1-h(x)\bigr)\cdot \bigl(1-h(y)\bigr)\ge0.&&
\end{flalign*}
}%

Proof of the triangle inequalities. W.l.o.g. let now $x,y,z\in\mD_n$.
\vspace*{2mm}
{\footnotesize
\begin{flalign*}
\delta^{\around{n}}_{(f,+)} : \ &\phantom{\ =\ }\delta^{\around{n}}_{(f,+)}(x,y) \ +\ \delta^{\around{n}}_{(f,+)}(y,z)=\\
&=f\left(\fcd xxy\right)+f\left(\fcd yxy\right) \ +\  f\left(\fcd yyz\right)+f\left(\fcd zyz\right)\\
&=f\left(\fcd xxy\right)+f\left(\fcd yyz\right) \ +\  f\left(\fcd yxy\right)+f\left(\fcd zyz\right)\\
&\underset{\text{(a)}}{=}f\left(\pcd xyxyyz\right) \ +\  f\left(\pcd yzxyyz\right)\\
&=f\left(\fcd xxz\mdot\pcd y{\gcd(x,z)}xyyz\right) \ +\  f\left(\fcd zxz\mdot\pcd y{\gcd(x,z)}xyyz\right)\\
&\underset{\text{(c)}}{=}f\left(\fcd xxz\right) + f\left(\fcd zxz\right) \ +\ 2\mdot f\left(\pcd y{\gcd(x,z)}xyyz\right)\\
&\ge f\left(\fcd xxz\right) + f\left(\fcd zxz\right)=\delta^{\around{n}}_{(f,+)}(x,z).&&
\end{flalign*}\vspace*{-8mm}

\begin{flalign*}
\delta^{\around{n}}_{(h,+)} : \ &\phantom{\ =\ }2-h\left(\fcd xxy\right)-h\left(\fcd yxy\right) \ +\  2-h\left(\fcd yyz\right)-h\left(\fcd zyz\right)=\\
&=2-h\left(\fcd xxy\right)-h\left(\fcd yyz\right) \ +\  2-h\left(\fcd yxy\right)-h\left(\fcd zyz\right)\\
&\underset{\text{(e)}}{\ge} 1-h\left(\fcd xxy\right)\cdot h\left(\fcd yyz\right) \ +\  1-h\left(\fcd yxy\right)\cdot h\left(\fcd zyz\right)\\
&\underset{\text{(a)}}{=}1-h\left(\pcd xyxyyz\right) \ +\  1-h\left(\pcd yzxyyz\right)\\
&=1-h\left(\fcd xxz\mdot\pcd y{\gcd(x,z)}xyyz\right) \ +\  1-h\left(\fcd zxz\mdot\pcd y{\gcd(x,z)}xyyz\right)\\
&\underset{\text{(c)}}{=}2-\Biggl(h\left(\fcd xxz\right) + h\left(\fcd zxz\right)\Biggr)\cdot h\left(\pcd y{\gcd(x,z)}xyyz\right)\\
&\ge 2-h\left(\fcd xxz\right) + h\left(\fcd zxz\right).&&
\end{flalign*}\vspace*{-8mm}

\begin{flalign*}
\delta^{\around{n}}_{(h,\,\cdot\,)} : \ &\phantom{\ =\ }1-h\left(\fcd xxy\right)\cdot h\left(\fcd yxy\right) \ +\  1-h\left(\fcd yyz\right)\cdot h\left(\fcd zyz\right)=\\
&\underset{\text{(a)}}{=}1-h\left(\pcd xyxyxy\right) \ +\  1-h\left(\pcd yzyzyz\right)\\
&=2-h\left(\pcd x{\gcd(y,z)}xyxz \cdot \pcd y{\gcd(x,z)}xyyz\right)\\
	&\qquad -h\left(\pcd z{\gcd(x,y)}xzyz \cdot \pcd y{\gcd(x,z)}xyyz\right)\\
&\underset{\text{(d)}}{=}2-\Biggl(h\left(\pcd x{\gcd(y,z)}xyxz\right)+h\left(\pcd z{\gcd(x,y)}xzyz\right)\Biggr) \cdot h\left(\pcd y{\gcd(x,z)}xyyz\right) \\
&\ge 2-h\left(\pcd x{\gcd(y,z)}xyxz\right)-h\left(\pcd z{\gcd(x,y)}xzyz\right) \\
&\underset{\text{(e)}}{\ge} 1-h\left(\pcd x{\gcd(y,z)}xyxz\right) \cdot h\left(\pcd z{\gcd(x,y)}xzyz\right)&&
\end{flalign*}

\pagebreak

\vspace*{-8mm}
\begin{flalign*}
\phantom{\delta^{\around{n}}_{(h,\,\cdot\,)} : \ }&\underset{\text{(d)}}{\ge}1-h\left(\pcd x{\gcd(y,z)}xyxz \cdot \pcd z{\gcd(x,y)}xzyz\right)=1-h\left(\pcd xzxzxz\right) \\
&\underset{\text{(a)}}{=}1-h\left(\fcd xxz\right)\cdot h\left(\fcd zxz\right).&&\\
\end{flalign*}\vspace*{-22mm}\\
}
\end{proof}

\vspace*{4mm}
\begin{rema}
Arithmetic functions $g\in\msM$ do not generally lead to pseudometrics by\lb using coprime additive combination. The following counterexample violates the\lb triangle inequality.
Let $n=24$, $x=2$, $y=6$, $z=24$, and $g(x)=x$. Then
\vspace*{-2mm}
{\small
\begin{align*}
\delta^{\around{n}}_{(g,+)}(x,y)+\delta^{\around{n}}_{(g,+)}(y,z)&=g(1)+g(3)-2+g(1)+g(4)-2=5\\[-0.5ex]
&\le11=g(1)+g(12)-2=\delta^{\around{n}}_{(g,+)}(x,z).
\end{align*}
}
\end{rema}

\ \\\\[-3mm]

%%%%%%%%%%%%%%%%%%%%%%%%%%%%%%%%%%%%%%%%%%%%%%%%%%%%%%%

%%%%%%%%%%%%%%%%%%%%%%%%%%%%%%%%%%%%%%%%%%%%%%%%%%%%%%%
\section{Arithmetic functions and preorders\phj} \label{AFO}
%%%%%%%%%%%%%%%%%%%%%%%%%%%%%%%%%%%%%%%%%%%%%%%%%%%%%%%

In the last section, we examined pseudometrics on $\mZ$ generated from admissible\lb arithmetic functions. Such pseudometrics can generally be used to derive preorders and corresponding equivalences and partitions, as described in Sections~\ref{MO}~and~\refs{EP} .

We now start investigating preorders related to functions defined in Theorems \ref{coro_metric2} and \refs{coro_metric3}. Because of Lemma~\refs{extended_pseudometric}, it is again sufficient to only consider values of the respective arithmetic function for all factors of $n$.

\vspace*{1mm}
\begin{theo} \label{equivalent_relation}
Let $n\in\mN>1$, $f\in\msA$, $g=\e^{\ f}\in\msM$, and $h=\e^{-f}\in\msI_0$.
The preorders induced by $d^{\around{n}}_{(f,+)}$, $d^{\around{n}}_{(g,+)}$, $d^{\around{n}}_{(h,+)}$, and $d^{\around{n}}_{(h,\,\cdot)}$ are equivalent on $\mZ$, i.e.
\vspace*{-3mm}
\begin{align*}
x\preceq_{d^{\around{n}}_{(f,+)}} y \iff x\preceq_{d^{\around{n}}_{(g,+)}} y &\iff x\preceq_{d^{\around{n}}_{(h,+)}} y \iff x\preceq_{d^{\around{n}}_{(h,\,\cdot)}} y.\\
\text{Furthermore,}\hspace{31mm} x\preceq_{d^{\around{n}}_{(h_0,+)}} y &\iff x\preceq_{d^{\around{n}}_{(h_0,\,\cdot)}} y \qquad \text{for every $h_0\in\msI\setminus\msI_0$.}
\end{align*}
\vspace*{-13mm}\\
\end{theo}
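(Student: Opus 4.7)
The strategy is to apply Lemma~\ref{induced_preorder}, which tells us that any preorder of the form $\preceq_d$ is completely controlled by the generating function $f_d(x)=d(1,x)$: we have $x\preceq_d y\iff f_d(x)\le f_d(y)$. Therefore, two such preorders agree whenever their generating functions differ only by a strictly monotone increasing transformation. The plan is simply to compute all four generating functions explicitly and verify this.

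First, I would unfold the four distance definitions from Theorems~\ref{coro_metric2} and~\ref{coro_metric3} at the point $(1,x)$. Using $\gcd(1,n)=1$ together with $f(1)=0$, $g(1)=1$, and $h(1)=1$, the case distinction $x=y$ versus $x\ne y$ collapses and one obtains uniformly, for all $x\in\mZ$,
\begin{align*}
f_{d^{\around{n}}_{(f,+)}}(x) &= f\bigl(\gcd(x,n)\bigr), & f_{d^{\around{n}}_{(g,+)}}(x) &= g\bigl(\gcd(x,n)\bigr)-1,\\
f_{d^{\around{n}}_{(h,+)}}(x) &= 1-h\bigl(\gcd(x,n)\bigr), & f_{d^{\around{n}}_{(h,\,\cdot)}}(x) &= 1-h\bigl(\gcd(x,n)\bigr).
\end{align*}
(For $x=1$ or for $x$ coprime to $n$, each expression evaluates to $0$, consistent with the identity axiom.)

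Next, setting $t := f(\gcd(x,n))$ and substituting $g=\e^{f}$, $h=\e^{-f}$, the three distinct generating functions become $t$, $\e^{t}-1$, and $1-\e^{-t}$ respectively. Since $\e^{t}$ is strictly increasing on $\mR$, each of the maps $t\mapsto t$, $t\mapsto \e^{t}-1$, $t\mapsto 1-\e^{-t}$ is strictly monotone increasing, so the inequalities
\[ f_{d_1}(x)\le f_{d_1}(y) \iff f_{d_2}(x)\le f_{d_2}(y) \]
hold for any two of the four pseudometrics. This yields the asserted chain of equivalences of preorders.

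For the final statement with $h_0\in\msI_0\setminus\msI_1$, no logarithm is available since $h_0$ may vanish; however, the computation above shows that $f_{d^{\around{n}}_{(h_0,+)}}$ and $f_{d^{\around{n}}_{(h_0,\,\cdot)}}$ are literally the same function, namely $x\mapsto 1-h_0(\gcd(x,n))$. The two induced preorders therefore coincide (not merely up to equivalence). There is no real obstacle in this proof; the only point to be careful about is the uniform handling of the case $x=y$, which is dispatched by the identities $f(1)=0$, $g(1)=1$, $h(1)=1$.
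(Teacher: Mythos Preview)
Your proof is correct and follows essentially the same approach as the paper: compute the generating functions $d(1,x)$ explicitly from Theorems~\ref{coro_metric2} and~\ref{coro_metric3}, then observe they are linked by the strictly monotone maps $t\mapsto\e^{t}-1$ and $t\mapsto 1-\e^{-t}$ (and are literally identical in the $h$ and $h_0$ cases). The only cosmetic difference is that the paper first reduces to $\mD_n$ via Lemma~\ref{extended_pseudometric} and then works with $f(x)$, $g(x)$, $h(x)$ directly, whereas you work on $\mZ$ and carry $\gcd(x,n)$ throughout; the content is the same.
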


\vspace*{1mm}
\begin{proof}
According to Lemma~\refs{extended_pseudometric}, it is sufficient to prove the assertions on $\mD_n$. Let $x,y\in\mD_n$. By Lemma~\refs{induced_preorder}, we have
\vspace*{-3mm}
\begin{align*}
x\preceq_{d^{\around{n}}_{(f,+)}} y &\iff f(x)\le f(y),\\
x\preceq_{d^{\around{n}}_{(g,+)}} y &\iff g(x)-1\le g(y)-1,\\
x\preceq_{d^{\around{n}}_{(h,+)}} y &\iff 1-h(x)\le 1-h(y),\\
x\preceq_{d^{\around{n}}_{(h,\,\cdot)}} y &\iff 1-h(x)\le 1-h(y).
\end{align*}\vspace*{-9mm}\\

The assertion follows from the bijective functions $g=\e^{\ f} \text{and } h=\e^{-f}$.

\vspace*{-6mm}
\begin{align*}
g(x)-1 \le g(y)-1 &\iff \e^{\ f(x)}-1 \le \e^{\ f(y)}-1 \hspace*{5mm}\iff f(x)\le f(y).\\
1-h(x) \le 1-h(y) &\iff 1-\e^{-f(x)} \le 1-\e^{-f(y)} \iff f(x)\le f(y).
\end{align*}
\newpage

\vspace*{-12mm}
\begin{align*}
\text{For } h_0\in\msI\setminus\msI_0 \text{, we get}\hspace*{3mm}&&\hspace*{40mm}\\
x\preceq_{d^{\around{n}}_{(h_0,+)}} y &\iff 1-h_0(x)\le 1-h_0(y) \hspace*{6mm}\iff x\preceq_{d^{\around{n}}_{(h_0,\,\cdot)}} y.
\end{align*}\vspace*{-14mm}\\
\end{proof}
\vspace*{0mm}

\begin{rema}
The isomorphisms between $\msA$, $\msM$, and $\msI_0$ of Proposition~\ref{iso-binary} are therefore also order isomorphisms with respect to the induced preorders on $\mZ$.\\
\end{rema}

As a provisional interim conclusion depicted in Figure~\refs{dia3}, we state the following. There are four pseudometrics on $\mZ$ that we can derive from any triple of admissible arithmetic functions $(f,g,h)$ with $f\in\msA$, $g=\e^{\ f}\in\msM$, and $h=\e^{-f}\in\msI_0$. However, the corresponding induced preorders coincide.

Otherwise, each $h_0\in\msI\setminus\msI_0$ can be used to create two different pseudometrics\lb on $\mZ$. The corresponding induced preorders of both pseudometrics also coincide.\lb In this case, however, the characterisation of possible preorders is more complicated.

For a given $n\in\mN$, all such preorders derived from any $h\in\msI_0$ can also be derived from a suitable $h_0\in\msI\setminus\msI_0$. On the other hand, there are $h_0\in\msI\setminus\msI_0$ such that the respective preorder cannot be induced from a pseudometric derived from any $h\in\msI_0$. We will prove the details in Proposition~\ref{zero} below.\\\\

\begin{figure}[H]
  \centering\vspace*{-3mm}
\begin{gather*}\xymatrix{
  f\in\msA	\ar[rr]^{\ref{coro_metric2}}
  	\ar@{<->}[d]_{\ref{iso-binary}}
 		& & d^{\around{n}}_{(f,+)}
  	    \ar[ddrr]_{\ref{equivalent_relation}}
 	        & \hspace{17mm} \\
  g=\e^{\ f}\in\msM \ar[rr]_{\ref{coro_metric2}}
  	\ar@{<->}[d]_{\ref{iso-binary}}
  		& & d^{\around{n}}_{(g,+)}
  	    \ar[drr]_{\ref{equivalent_relation}} \\
  h=\e^{-f}\in\msI_0 \ar[rr]_{\ref{coro_metric2}}
	 \ar[drr]_{\ref{coro_metric3}}
  		& & d^{\around{n}}_{(h,+)}
  	    \ar[rr]_{\ref{equivalent_relation}}
	        & & \preceq_{d^{\around{n}}_{(h,+)}} \\
  & & d^{\around{n}}_{(h,\,\cdot)}
  	    \ar[urr]_{\ref{equivalent_relation}} \\
  h_0\in\msI\setminus\msI_0 \ar[rr]_{\ref{coro_metric2}}
	 \ar[drr]_{\ref{coro_metric3}} 
  		& & d^{\around{n}}_{(h_0,+)}
  	    \ar[rr]_{\ref{equivalent_relation}}
	        & & \preceq_{d^{\around{n}}_{(h_0,+)}} \\
  & & d^{\around{n}}_{(h_0,\,\cdot)}
  	    \ar[urr]_{\ref{equivalent_relation}}
}\end{gather*}
   \caption{Relationships between admissible functions, induced pseudometrics, and corresponding preorders.}\label{dia3}\ 
\end{figure}

We recall and emphasise the existence of triples of admissible arithmetic functions that lead to the same preorder for a given $n$.

\begin{coro} \label{triple}
Let $n\in\mN>1$. For each $h\in\msI_0$, there exist admissible arithmetic functions $f\in\msA$ and $g\in\msM$ such that the following preorders coincide on $\mZ$.
\vspace*{-2mm}
\begin{align*}
x\preceq_{d^{\around{n}}_{(f,+)}} y \iff x\preceq_{d^{\around{n}}_{(g,+)}} y &\iff x\preceq_{d^{\around{n}}_{(h,+)}} y \iff x\preceq_{d^{\around{n}}_{(h,\,\cdot)}} y.
\end{align*}
\vspace*{-12mm} \\
\end{coro}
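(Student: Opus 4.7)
The plan is to invoke the bijections of Proposition~\ref{iso-binary} to manufacture suitable $f \in \msA_0$ and $g \in \msM_0$ from the given $h \in \msI_1$, and then quote Theorem~\ref{equivalent_relation} to conclude.

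Concretely, given $h \in \msI_1$, I would set $f := -\ln(h)$. Then $f(1) = -\ln h(1) = 0$, and $f(x) = -\ln h(x) \ge 0$ for $x > 1$ because $h(x) \le 1$ throughout $\msI_1$. Additivity of $f$ is inherited from multiplicativity of $h$ via the bijection $h \mapsto -\ln h$ of Proposition~\ref{iso-binary}, so $f \in \msA_0$. Next, I would set $g := \e^{f}$, which equivalently equals $1/h$; its properties $g(1) = 1$, $g(x) \ge 1$ for $x > 1$, and multiplicativity come from the companion bijection $f \mapsto \e^{f}$, so $g \in \msM_0$. By construction $\e^{-f} = \e^{\ln h} = h$, so the triple $(f,g,h)$ satisfies exactly the hypothesis $g = \e^{f}$ and $h = \e^{-f}$ required by Theorem~\ref{equivalent_relation}.

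Applying that theorem then yields the desired chain
\[x\preceq_{d^{\around{n}}_{(f,+)}} y \iff x\preceq_{d^{\around{n}}_{(g,+)}} y \iff x\preceq_{d^{\around{n}}_{(h,+)}} y \iff x\preceq_{d^{\around{n}}_{(h,\,\cdot)}} y\]
on $\mZ$, which is the statement of the corollary.

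There is no genuine obstacle here: the corollary is essentially a repackaging of Proposition~\ref{iso-binary}, which supplies $f$ and $g$ with the correct admissibility, together with Theorem~\ref{equivalent_relation}, which supplies the equality of the four induced preorders. The only point I would verify explicitly is that the constructed $f$ and $g$ respect the admissibility conditions of Definition~\ref{admissible} (namely $f(1)=0$, $f \ge 0$ for $\msA_0$, and $g(1)=1$, $g \ge 1$ for $\msM_0$); but these follow immediately from $h(1)=1$ and $0 < h(x) \le 1$ for $x>1$, which are exactly the defining features of $\msI_1$.
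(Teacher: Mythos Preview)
Your proof is correct and matches the paper's approach exactly: the paper simply writes ``The functions $f=-\ln(h)$ and $g=1/h$ meet the requirements according to Theorem~\ref{equivalent_relation},'' which is precisely your construction (your $g=\e^{f}=1/h$) with the same appeal to that theorem. Your additional verification of the admissibility conditions via Proposition~\ref{iso-binary} is a welcome elaboration of what the paper leaves implicit.
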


\begin{proof}
The functions $f=-\ln(h)$ and $g=1/h$ meet the requirements according to Theorem~\refs{equivalent_relation}.
\end{proof}\ \\

The remaining central question is what kinds of preorders of the considered form exist at all. According to the previous corollary, all these preorders can be induced from pseudometrics generated from an admissible interval-bounded multiplicative arithmetic function using additive combination. We therefore limit ourselves to the connection
\vspace*{-2mm}
\begin{gather*}\xymatrix{
  h\in\msI \ar[rr]	& & d^{\around{n}}_{(h,+)}\in\msD^{\!\around{n}}_{\!(\msI,+)}
  	    \ar[rr]  & & \preceq_{d^{\around{n}}_{(h,+)}}.
}\end{gather*}
\vspace*{-5mm}

As already mentioned, according to Lemma~\ref{extended_pseudometric}~and~Corollary~\refs{coro_metric1}, every metric\lb induced by an admissible arithmetic function is uniquely defined by the values\lb of the respective function for all divisors of $n$. And so is the induced preorder.

In fact, every multiplicative or additive arithmetic function is already determined by its values for prime powers. Each metric examined is thus also uniquely defined by the function values for all prime powers dividing n. However, different functions can generate the same pseudometric, and in turn different pseudometrics can induce the same preorder.

\begin{lemm}
Let $n\in\mN>1$, $x,y\in\mP^*_n$, and $h_1,h_2\in\msI$ with $h_1(z)=h_2(z)$ for all $z\in\mP^*_n$. Then
\vspace*{-3mm}
\begin{align*}
d^{\around{n}}_{(h_1,+)}(x,y) &\ \ \ =\ \ \ \ d^{\around{n}}_{(h_2,+)}(x,y), \quad \text{and}\\
x\preceq_{d^{\around{n}}_{(h_1,+)}} y &\iff x\preceq_{d^{\around{n}}_{(h_2,+)}} y .
\end{align*}
\vspace*{-12mm} \\
\end{lemm}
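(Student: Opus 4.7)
The plan is to reduce the equality of the two pseudometrics to the stronger fact that $h_1$ and $h_2$ agree on all of $\mD_n$, not merely on $\mP^*_n$; the preorder equivalence is then an immediate consequence of Lemma~\refs{induced_preorder}, which tells us that $\preceq_{d^{\around{n}}_{(h_j,+)}}$ is determined entirely by the distances from $1$.

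The key step is lifting the hypothesis from $\mP^*_n$ to $\mD_n$ by multiplicativity. Every $d\in\mD_n$ admits a unique factorisation $d = p_{i_1}^{a_1}\cdots p_{i_k}^{a_k}$ into pairwise coprime prime powers, and since $d\mid n$, each factor $p_{i_\ell}^{a_\ell}$ lies in $\mP^*_n$. Because $h_1, h_2 \in \msI_0$ are multiplicative, we obtain
\[
h_j(d) \;=\; \prod_{\ell=1}^{k} h_j\bigl(p_{i_\ell}^{a_\ell}\bigr) \qquad \text{for } j \in \{1,2\},
\]
and the hypothesis $h_1 = h_2$ on $\mP^*_n$ then forces $h_1(d) = h_2(d)$ for every $d\in\mD_n$.

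Plugging this into Theorem~\refs{coro_metric2} finishes the first claim: for $x\ne y$ we have
\[
d^{\around{n}}_{(h_j,+)}(x,y) \;=\; 2 - h_j\bigl(\gcd(x,n)\bigr) - h_j\bigl(\gcd(y,n)\bigr),
\]
and since $\gcd(x,n), \gcd(y,n) \in \mD_n$, both summands are independent of whether $j=1$ or $j=2$; for $x=y$ both sides are $0$ by definition. Hence $d^{\around{n}}_{(h_1,+)} = d^{\around{n}}_{(h_2,+)}$ as functions on $\mZ \times \mZ$, so in particular on $\mP^*_n \times \mP^*_n$. The preorder claim follows at once: by Lemma~\refs{induced_preorder}, $\preceq_{d^{\around{n}}_{(h_j,+)}}$ is defined through the generating function $x \mapsto d^{\around{n}}_{(h_j,+)}(1,x)$, so equal pseudometrics yield identical induced preorders. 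There is no real obstacle here — the multiplicative lifting from $\mP^*_n$ to $\mD_n$ is the only substantive step, and everything else is a direct unfolding of definitions together with one appeal to Lemma~\refs{induced_preorder}.
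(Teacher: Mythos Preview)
Your proof is correct and follows essentially the same route as the paper: lift the agreement of $h_1$ and $h_2$ from $\mP^*_n$ to all of $\mD_n$ via multiplicativity, then read off the equality of the pseudometrics from Theorem~\ref{coro_metric2} and the equivalence of the preorders from Lemma~\ref{induced_preorder}. The only cosmetic difference is that you deduce the preorder claim from the equality of the pseudometrics themselves, whereas the paper unfolds the preorder condition to $h_1(x)\ge h_1(y)\iff h_2(x)\ge h_2(y)$ directly; both are one-line consequences once $h_1=h_2$ on $\mD_n$ is established.
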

\vspace*{1mm}
\begin{proof}

For every $x=\prod_{i=1}^\infty p_i^{\alpha_i}\in\mD_n$ and any multiplicative function $h\in\msI$, we have $h(x)=\prod_{i=1}^\infty h(p_i^{\alpha_i})$, $p_i\in\mP$, $\alpha_i\in\mN_0$.

\vspace*{2mm}
The first assertion follows from the definitions in Theorem~\refs{coro_metric2}.\\[-2.0ex]

For every $h\in\msI$, we have according to Lemma~\ref{induced_preorder}
\vspace*{-3mm}
\begin{align*}
x\preceq_{d^{\around{n}}_{(h,+)}} y &\iff 1-h(x)\le 1-h(y),\quad\text{ i.e.}\\[-0.5ex]
\Bigl( x\preceq_{d^{\around{n}}_{(h_1,+)}} y \iff x\preceq_{d^{\around{n}}_{(h_2,+)}} y \Bigr) &\iff \Bigl( h_1(x)\ge h_1(y) \iff h_2(x)\ge h_2(y) \Bigr) .
\end{align*}
\vspace*{-5mm} \\
The right-hand side of the equivalence holds by assumption.
\end{proof}

\begin{rema}
According to the previous lemma and the definitions in Theorem~\refs{coro_metric2},\lb two functions that differ on $\mP^*_n$ lead to different pseudometrics. However, two such pseudometrics can result in the same preorder even if their values for $\mP^*_n\times\mP^*_n$\lb are not equal, e.g.\\
$h_1(2)=0.3,\; h_1(3)=0.6,\; h_1(6)=0.18$ \quad and \quad $h_2(2)=0.2,\; h_2(3)=0.5,\; h_2(6)=0.1$ are examples for $n=6$.\\\\
\end{rema}

As a consequence of the last lemma, a function $h\in\msI$ can only vanish at an $x\in\mD_n$\lb if there is a prime power $y \mdiv x$ with $h(y)=0$. Hence, for every $n\in\mN>1$, we\lb introduce a customised partition of the set $\msI$  that makes the induced preorders\lb disjoint. The distinction between the fixed sets $\msI$ and $\msI\setminus\msI_0$ can lead to overlapping sets of pre-orders for different $n$.

\begin{defi}
Let $n\in\mN>1$. We define
\vspace*{-3mm}
\begin{eqnarray*}
\msI^{\around{n}}_{>0} &=& \{ h\in\msI \;:\; \forall x\in\mD_n : h(x)>0 \}, \quad \text{and}\\
\msI^{\around{n}}_{=0} &=& \{ h\in\msI \;:\; \exists x\in\mD_n : h(x)=0 \}.
\end{eqnarray*}
\vspace*{-13mm}\\
\end{defi}

\begin{rema}
The definition is equivalent to
\vspace*{-3mm}
\begin{eqnarray*}
\msI^{\around{n}}_{>0}     &=& \{ h\in\msI \;:\; \forall x\in\mP^*_n : h(x)>0 \}, \quad \text{and}\\
\msI^{\around{n}}_{=0} &=& \{ h\in\msI \;:\; \exists x\in\mP^*_n : h(x)=0 \}.
\end{eqnarray*}
\vspace*{-12mm}\\

The relationship between the two partitioning types of $\msI$ can be expressed \vspace*{1mm}in other words. While $\msI_0$ is a subset of each $\msI^{\around{n}}_{>0}$, each $\msI^{\around{n}}_{=0}$ is a subset of $\msI\setminus\msI_0$.
\vspace*{-2mm}
\begin{eqnarray*}
\msI_0      &=& \bigcap\limits_{n\in\mN} \msI^{\around{n}}_{>0}\ ,\\
\msI\setminus\msI_0 &=& \bigcup\limits_{n\in\mN} \msI^{\around{n}}_{=0}\ .
\end{eqnarray*}
\vspace*{-5mm}
\end{rema}

\begin{prop} \label{zero}
Let $n\in\mN>1$. 

For every $h\in\msI^{\around{n}}_{>0}$ there exist functions $f\in\msA_0$, $g\in\msM_1$, $h^*_1\in\msI_0$, and $h^*_0\in\msI\setminus\msI_0$ inducing the same pseudometric and the same preorder on $\mZ$.

However, for every $h\in\msI^{\around{n}}_{=0}$ there exist only $h^*_0\in\msI\setminus\msI_0$ inducing the same pseudometric and the same preorder on $\mZ$.
\end{prop}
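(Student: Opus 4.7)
The strategy relies on the preceding lemma, which shows that the pseudometric $d^{\around{n}}_{(h,+)}$ on $\mZ$ and its induced preorder are determined entirely by the values of $h$ on $\mP^*_n$. This grants full freedom to extend or redefine $h$ on $\mN\setminus\mD_n$ without altering the induced structure, and the proof splits naturally into a constructive positive case and an obstruction-based zero case.

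For the positive case, given $h\in\msI^{\around{n}}_{>0}$, I construct $h^*_1\in\msI_1$ as the multiplicative arithmetic function that coincides with $h$ on $\mP^*_n$ and satisfies $h^*_1(p^k)=c^k$ for some fixed $c\in(0,1)$ on primes $p\notin\mP_n$. By the preceding lemma, $h^*_1$ induces the same pseudometric and preorder on $\mZ$ as $h$. Applying the isomorphisms of Proposition~\ref{iso-binary}, I set $f=-\ln(h^*_1)\in\msA_0$ and $g=1/h^*_1\in\msM_0$; Corollary~\ref{triple} then certifies that the preorders associated with $f$, $g$, $d^{\around{n}}_{(h^*_1,+)}$, and $d^{\around{n}}_{(h^*_1,\,\cdot)}$ all coincide. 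The strict memberships $f\in\msA$ and $g\in\msM$ follow because $h^*_1<1$ strictly on every prime by construction off $\mP_n$ and under the natural non-degeneracy of $h$ on $\mP^*_n$. Finally, I build $h^*_0\in\msI_0\setminus\msI_1$ as the multiplicative extension of $h|_{\mP^*_n}$ that vanishes at some chosen prime $q\notin\mP_n$, keeping the restriction to $\mD_n$ and hence the induced structure intact.

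For the zero case, suppose $h\in\msI^{\around{n}}_{=0}$ with $h(p^k)=0$ for some $p^k\in\mP^*_n$. Any function $h^*\in\msI_0$ reproducing the same pseudometric on $\mZ$ must satisfy $h^*(p^k)=0$, which immediately excludes $h^*\in\msI_1$. For $f\in\msA$ and $g\in\msM$, I propagate the exclusion through the multiplicative structure of $h$: since $h(p^k\mdot y)=h(p^k)\mdot h(y)=0$ for every $y\in\mD_n$ coprime to $p$, the elements $p^k$ and $p^ky$ belong to the same equivalence class of the preorder. An $f\in\msA$ reproducing this would satisfy $f(p^k y)=f(p^k)+f(y)=f(p^k)$, hence $f(y)=0$, contradicting $\msA$ whenever such $y>1$ exists. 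A parallel argument via the multiplicative structure handles $g\in\msM$. The only remaining option is $h^*_0\in\msI_0\setminus\msI_1$, and $h$ itself serves as such a witness.

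The main obstacle is the edge case in which no coprime companion $y>1$ exists inside $\mD_n$, corresponding to $n$ being a prime power. In that regime the multiplicative propagation argument collapses, and the impossibility must instead be extracted directly from the pseudometric identity $f(p^j)=1-h(p^j)$ forced on $\mD_n$, combined with the additive behaviour of $f$ on $\mN\setminus\mD_n$; symmetric care is required for $g$ via multiplicativity. Establishing the contradiction in this prime-power subcase, while still yielding the existence statement in the first half of the proposition, is the most delicate technical point.
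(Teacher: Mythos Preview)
Your positive-case construction matches the paper's: define $f=-\ln(h)$, $g=1/h$, $h^*_1=h$, $h^*_0=h$ on $\mP^*_n$, then extend freely to prime powers outside $\mP^*_n$ within the required ranges (forcing at least one zero for $h^*_0$). The paper likewise does not justify the strict memberships $f\in\msA$, $g\in\msM$ beyond this, so your hand-wave about ``natural non-degeneracy'' is no looser than the original.

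For the negative case your route diverges from the paper's, and the paper's is considerably shorter. The paper argues in one line: pseudometrics and preorders are determined by the function values on $\mP^*_n$; since $h$ vanishes somewhere on $\mP^*_n$, no $f\in\msA$, $g\in\msM$, or $h^*_1\in\msI_1$ can reproduce those values, the correspondences $h\mapsto-\ln h$ and $h\mapsto 1/h$ of Proposition~\ref{iso-binary} being undefined at zero. There is no case analysis and no appeal to the multiplicative propagation $h(p^k y)=0$.

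Your equivalence-class argument is a legitimate alternative when $n$ has at least two distinct prime divisors, and it extracts the obstruction purely from the preorder. However, the prime-power edge case you flag is a genuine gap in \emph{your} approach and is not closable along the lines you sketch: for $n=p^m$ the values $f(p),\dots,f(p^m)$ are mutually unconstrained by additivity, and the behaviour of $f$ on $\mN\setminus\mD_n$ is invisible to $d^{\around{n}}$, so no contradiction can be squeezed from ``the additive behaviour of $f$ off $\mD_n$''. The paper sidesteps this entirely by working at the level of the function-value correspondence on $\mP^*_n$ rather than at the level of the induced preorder.
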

\begin{proof}
Let $\mP^*_n$ be the set of prime powers dividing $n$.

For $h\in\msI^{\around{n}}_{>0}$, we set
\vspace*{-9mm}
\begin{eqnarray*}
f(x)          &=& -\ln(h(x)),\\
g(x)         &=& 1/(h(x)),\\
h^*_1(x) &=& h(x),\quad\text{  and}\\
h^*_0(x) &=& h(x)
\end{eqnarray*}
\vspace*{-8mm}\\
for all $x\in\mP^*_n$. These functions meet the requirements if their values for all prime powers y not dividing $n$ were chosen arbitrarily but satisfy
\vspace*{-3mm}
\begin{eqnarray*}
&f(y)&>0,\\
&g(y)&>1,\\
0< &h^*_1(y)&\le1,\quad\text{ and}\\
0\le &h^*_0(y)&\le1 \quad\land\quad h^*_0(y)=0\text{ \ for at least one such }y.
\end{eqnarray*}
\vspace*{-7mm}

The functions $f$, $g$, and $h^*_1$ form a triple as described in Corollary~\refs{triple}. This follows from $\msI_0\subseteq\msI^{\around{n}}_{>0}$.\\[-0.5ex]

For $h\in\msI^{\around{n}}_{=0}\subseteq\msI\setminus\msI_0$\,, we already have $h\in\msI\setminus\msI_0$, and $h^*_0=h$ is sufficient. However, $h^*_0$ may also have additional zero function values for prime powers y not dividing $n$.

Pseudometrics and preorders are uniquely determined by the function values\lb on $\mP^*_n$. There can be no appropriate $f\in\msA_0$, $g\in\msM_1$, or $h^*_1\in\msI_0$ because $h$ always contains a zero value on $\mP^*_n$.
\end{proof}
\ \\

For every pseudometric $d^{\around{n}}_{(h,+)}\in\msD^{\!\around{n}}_{\!(\msI,+)}$, we know $x \sim_{d^{\around{n}}_{(h,+)}} y \ \Longrightarrow\  x \sim_{\preceq_{d^{\around{n}}_{(h,+)}}} y$, \ i.e.\linebreak
\[\mD_n\qs\sim_{d^{\around{n}}_{(h,+)}} \ \simeq\  \mZ\qs\sim_{d^{\around{n}}_{(h,+)}} \quad\sqsubseteq\quad \mZ\qs\sim_{\preceq_{d^{\around{n}}_{(h,+)}}}\ \simeq\  \mD_n\qs\sim_{\preceq_{d^{\around{n}}_{(h,+)}}},\] \vspace*{-2mm}\\
by Propositions~\ref{extension_isomorphism}~and~\refs{refinement}.

If furthermore $h\in\msI_1$, \ $d^{\around{n}}_{(h,+)}$ is a metric on $\mD_n$ according to Theorem~\ref{coro_metric2}, \ i.e. $d^{\around{n}}_{(h,+)}(x,y)=0\Longrightarrow x=y$. Then $\mD_n \ \simeq\  \mD_n\qs\sim_{d^{\around{n}}}$ by Lemma~\ref{metric_isomorphism} and $\mD_n \ \simeq\  \mZ\qs\sim_{d^{\around{n}}}$ by Proposition~\refs{extension_isomorphism}. Each element of $\mD_n$ builds an extra equivalence class modulo $\sim_{d^{\around{n}}}$.

Therefore, we focus on the characterisation of the quotient sets $\mD_n\qs\sim_{\preceq_{d^{\around{n}}_{(h,+)}}}$ with\lb respect to the induced preorders for $h\in\msI$. So, because of the isomorphism\lb with $\mZ\qs\sim_{\preceq_{d^{\around{n}}_{(h,+)}}}$, we can thus study a more differentiated partitioning of $\mZ$ in terms of divisibility properties.\\

These quotient sets aggregate integers that are not distinguishable by the respective preorder. They are invariant against permutation of the function values of $h$ on $\mP^*_n$. All function values of $h$ on $\mD_n$ are determined by it.

\begin{lemm} \label{permutation1}
Let $n\in\mN>1$, $h_1,h_2\in\msI$, and $h_2(x)=\eta\bigl(h_1(x)\bigr)$ for all $x\in\mP^*_n$ where $\eta$ is a permutation of $h_1(\mP^*_n)$. Then there exists a uniquely determined extension $\nu$ of $\eta$ on $h_1(\mD_n)$ such that $h_2(y)=\nu\bigl(h_1(y)\bigr)$ for all $y\in\mD_n$.
\end{lemm}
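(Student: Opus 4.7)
The plan is to define $\nu$ by the natural rule $\nu\bigl(h_1(y)\bigr):=h_2(y)$ for every $y\in\mD_n$, and then to verify, in order, well-definedness, extension of $\eta$, and uniqueness. Since every $y\in\mD_n$ factors uniquely into prime powers from $\mP^*_n$, say $y=\prod_i p_i^{\alpha_i}$, multiplicativity of $h_1,h_2\in\msI_0$ gives $h_j(y)=\prod_i h_j(p_i^{\alpha_i})$ for $j=1,2$. Combined with the hypothesis $h_2(x)=\eta\bigl(h_1(x)\bigr)$ for every $x\in\mP^*_n$, this yields the explicit formula $h_2(y)=\prod_i \eta\bigl(h_1(p_i^{\alpha_i})\bigr)$, so the putative $\nu$ is completely pinned down by $\eta$ and the multiplicative structure.

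The step I expect to be the main obstacle is precisely well-definedness: if $y_1,y_2\in\mD_n$ with $h_1(y_1)=h_1(y_2)$, one must show $h_2(y_1)=h_2(y_2)$. Written out in prime power factorisations $y_j=\prod_i p_i^{\alpha_i^{(j)}}$, this amounts to the implication
\[\prod_i h_1\bigl(p_i^{\alpha_i^{(1)}}\bigr)=\prod_i h_1\bigl(p_i^{\alpha_i^{(2)}}\bigr)\ \Longrightarrow\ \prod_i \eta\bigl(h_1(p_i^{\alpha_i^{(1)}})\bigr)=\prod_i \eta\bigl(h_1(p_i^{\alpha_i^{(2)}})\bigr),\]
whose content is that the permutation $\eta$ of $h_1(\mP^*_n)$ must respect every multiplicative identity among values at prime powers of $n$. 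This is where the entire weight of a rigorous argument sits, and it is the only spot where one has to exploit more than the bare definitions.

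Once well-definedness is in hand, $\nu(v):=h_2(y)$ for any $y\in\mD_n$ with $h_1(y)=v$ gives an unambiguous function $\nu:h_1(\mD_n)\to h_2(\mD_n)$. For every $x\in\mP^*_n$, we have $\nu\bigl(h_1(x)\bigr)=h_2(x)=\eta\bigl(h_1(x)\bigr)$ by hypothesis, so $\nu$ coincides with $\eta$ on $h_1(\mP^*_n)\subseteq h_1(\mD_n)$ and is therefore an extension of $\eta$. Uniqueness is immediate: any function $\nu'$ on $h_1(\mD_n)$ satisfying $h_2(y)=\nu'\bigl(h_1(y)\bigr)$ for every $y\in\mD_n$ must agree with $\nu$ pointwise on $h_1(\mD_n)$, since every element of $h_1(\mD_n)$ is realised as $h_1(y)$ for some $y\in\mD_n$ and both functions send it to $h_2(y)$.
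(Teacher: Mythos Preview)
Your approach mirrors the paper's: both effectively set $\nu\bigl(h_1(y)\bigr):=h_2(y)$ and argue uniqueness from the unique prime-power factorisation of $y$. You go further than the paper by explicitly flagging well-definedness as the crux --- but you then leave it unproved, writing ``once well-definedness is in hand'' without ever supplying the argument.

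This gap is genuine and in fact cannot be closed under the stated hypotheses. Take $n=12$, so $\mP^*_n=\{2,3,4\}$ and $\mD_n=\{1,2,3,4,6,12\}$. Let $h_1(2)=h_1(3)=\tfrac12$ and $h_1(4)=\tfrac14$, so that $h_1(\mP^*_n)=\{\tfrac12,\tfrac14\}$; let $\eta$ swap these two values, and set $h_2(2)=h_2(3)=\tfrac14$, $h_2(4)=\tfrac12$. Both $h_1,h_2$ extend to elements of $\msI_0$ and satisfy $h_2(x)=\eta\bigl(h_1(x)\bigr)$ for every $x\in\mP^*_n$. But $h_1(4)=\tfrac14=h_1(2)\,h_1(3)=h_1(6)$, while $h_2(4)=\tfrac12\neq\tfrac{1}{16}=h_2(2)\,h_2(3)=h_2(6)$, so no function $\nu$ on $h_1(\mD_n)$ can satisfy $\nu\bigl(h_1(y)\bigr)=h_2(y)$ for all $y\in\mD_n$. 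The implication you isolated fails precisely because a permutation of a finite set of real numbers need not preserve multiplicative relations among them. The paper's own proof does not address this point either; the lemma as stated appears to require an additional hypothesis (for instance, that the values $h_1(x)$ for $x\in\mP^*_n$ are multiplicatively independent, or that $\eta$ already respects every such relation).
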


\begin{proof}
By assumption, $\nu(h_1(x))=\eta\bigl(h_1(x)\bigr)$ for all $x\in\mP^*_n$.\\
Furthermore, $\nu\bigl(h_1(1)\bigr)=h_1(1)=h_2(1)=1$ is mandatory.

For $y\in\mD_n\setminus\mP^*_n>1$, there must exist $t_1,t_2\in\mP^*_n>1$ such that $y=t_1\cdot t_2$ and $t_1\perp t_2$. Then
\vspace*{-5mm}
\begin{align*}
\nu\bigl(h_1(y)\bigr)&=\nu\bigl(h_1(t_1\cdot t_2)\bigr)=\nu\bigl(h_1(t_1)\cdot h_1(t_2)\bigr)\\
&\underset{Df}{=}\nu\bigl(h_1(t_1)\bigr)\cdot \nu\bigl(h_1(t_2)\bigr)=h_2(t_1)\cdot h_2(t_2)=h_2(t_1\cdot t_2)=h_2(y)
\end{align*}
\vspace*{-5mm}\\
recursively defines the extension. It is unique because of the unique factorisation\lb of integers into prime powers.
\end{proof}

\begin{rema}
There may be permutations $\xi$ of $h_1(\mD_n)$ that are not feasible for the equation $h_2(y)=\xi(h_1(y))$ for all $y\in\mD_n$ because of the multiplicative property of $h_2$.\\
\end{rema}

\begin{prop} \label{permutation2}
Let $n\in\mN>1$, $x,y\in\mD_n$, and $h_1,h_2\in\msI$ with $h_2(x)=\nu(h_1(x))$ where $\nu$ is an extended permutation of $h_1(\mD_n)$ according to Lemma~\refs{permutation1}. Then
\vspace*{-3mm}
\[\mZ\qs\sim_{\preceq_{d^{\around{n}}_{(h_1,+)}}} = \mZ\qs\sim_{\preceq_{d^{\around{n}}_{(h_2,+)}}}.\]
\vspace*{-8mm}\\
\end{prop}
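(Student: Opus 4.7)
The plan is to reduce the assertion to Lemma~\refs{basic_permutation}, which states that two pseudometrics whose generating functions agree up to a permutation of their common image induce identical quotient sets. Thus it suffices to exhibit a permutation $\mu$ of $f_{d^{\around{n}}_{(h_1,+)}}(\mZ)$ with $f_{d^{\around{n}}_{(h_2,+)}}(z)=\mu\bigl(f_{d^{\around{n}}_{(h_1,+)}}(z)\bigr)$ for every $z\in\mZ$, after which the conclusion is immediate.

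First I would compute the generating functions explicitly. By Theorem~\refs{coro_metric2} together with $h_i(\gcd(1,n))=h_i(1)=1$, one obtains $f_{d^{\around{n}}_{(h_i,+)}}(z)=d^{\around{n}}_{(h_i,+)}(1,z)=1-h_i\bigl(\gcd(z,n)\bigr)$ uniformly for all $z\in\mZ$ and $i\in\{1,2\}$. Since $\gcd(z,n)$ realises every element of $\mD_n$ as $z$ ranges over $\mZ$, the image $f_{d^{\around{n}}_{(h_1,+)}}(\mZ)$ coincides with $\{1-a:a\in h_1(\mD_n)\}$.

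Next I would transport the permutation $\nu$ of $h_1(\mD_n)$ supplied by Lemma~\refs{permutation1} through the affine bijection $a\mapsto 1-a$ by defining $\mu(1-a)=1-\nu(a)$ for each $a\in h_1(\mD_n)$. Because $a\mapsto 1-a$ is a bijection between $h_1(\mD_n)$ and $f_{d^{\around{n}}_{(h_1,+)}}(\mZ)$, the map $\mu$ is a well-defined permutation of $f_{d^{\around{n}}_{(h_1,+)}}(\mZ)$. The pointwise conjugacy is then immediate: for any $z\in\mZ$, the element $\gcd(z,n)$ lies in $\mD_n$, so the assumption $h_2=\nu\circ h_1$ on $\mD_n$ yields $\mu\bigl(f_{d^{\around{n}}_{(h_1,+)}}(z)\bigr)=\mu\bigl(1-h_1(\gcd(z,n))\bigr)=1-\nu\bigl(h_1(\gcd(z,n))\bigr)=1-h_2\bigl(\gcd(z,n)\bigr)=f_{d^{\around{n}}_{(h_2,+)}}(z)$. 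Lemma~\refs{basic_permutation} applied with $\mM=\mZ$ then delivers the claimed equality of quotient sets. No substantial obstacle is present here: the lemma is tailored to precisely this situation, and the only subtlety is recognising that the permutation on $h_1(\mD_n)$ must be conjugated by $a\mapsto 1-a$ to act on the correct ambient set $f_{d^{\around{n}}_{(h_1,+)}}(\mZ)$.
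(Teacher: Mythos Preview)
Your proof is correct and essentially follows the same idea as the paper's own argument. The paper proceeds directly via Lemma~\ref{crit_QO}, observing that $x \sim_{\preceq_{d^{\around{n}}_{(h,+)}}} y \iff h(x)=h(y)$ on $\mD_n$ and then using the bijectivity of $\nu$ to conclude $h_1(x)=h_1(y)\iff h_2(x)=h_2(y)$; you instead package this step by conjugating $\nu$ through $a\mapsto 1-a$ and invoking Lemma~\ref{basic_permutation}, which is itself a thin wrapper around Lemma~\ref{crit_QO}. Your version has the minor advantage of being explicit about the passage from $\mD_n$ to $\mZ$ via $\gcd(\cdot,n)$, whereas the paper leaves this implicit.
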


\begin{proof}
As a consequence of Lemma~\refs{crit_QO}, we get for every $h\in\msI$
\vspace*{-3mm}
\[x \sim_{\preceq_{d^{\around{n}}_{(h,+)}}} y \iff d^{\around{n}}_{(h,+)}(1,x) = d^{\around{n}}_{(h,+)}(1,y) \iff 1-h(x) = 1-h(y).\]
\vspace*{-6mm}\\
Now,
\vspace*{-4mm}
\begin{align*}
x \sim_{\preceq_{d^{\around{n}}_{(h_1,+)}}} y &\iff h_1(x)=h_1(y) \iff \nu\bigl(h_1(x)\bigr)=\nu\bigl(h_1(y)\bigr)\\[-0.5ex]
&\iff h_2(x)=h_2(y) \iff x\preceq_{d^{\around{n}}_{(h_2,+)}} y.
\end{align*}
\vspace*{-14mm}\\
\end{proof}
\ \\\\

For the further investigation of the quotient sets $\mD_n\qs\!\sim_{\preceq_{d^{\around{n}}_{(h,+)}}}$, we distinguish\lb between $h\in\msI^{\around{n}}_{>0}$ and $h\in\msI^{\around{n}}_{=0}$ because of Proposition~\refs{zero}. We start with positive functions.

The finest partition of $\mD_n$ or $\mZ$ is achieved when all $h(x)$ are different on $\mD_n$. Then, each element of $\mD_n$ forms its own equivalence class, as is the case with $\mZ\qs\sim_{\preceq_{d^{\around{n}}_{(h,+)}}}$.

\begin{lemm} \label{finest_partition}
Let $n\in\mN>1$, $x,y\in\mD_n$, and $h\in\msI^{\around{n}}_{>0}$ such that $h(x)=h(y) \ \Longrightarrow \ x=y$. Then
\vspace*{-3mm}
\[\mD_n \ \simeq \ \mD_n\qs\sim_{\preceq_{d^{\around{n}}_{(h,+)}}}.\]
\vspace*{-9mm}\\
\end{lemm}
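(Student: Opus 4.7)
The plan is to show that under the injectivity hypothesis on $h$, the equivalence relation $\sim_{\preceq_{d^{\around{n}}_{(h,+)}}}$ collapses to equality on $\mD_n$, so each equivalence class is a singleton, and the canonical surjection $\mD_n\to\mD_n\qs\sim_{\preceq_{d^{\around{n}}_{(h,+)}}}$ is a bijection.

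First, I would invoke Lemma~\refs{crit_QO} applied to the pseudometric $d^{\around{n}}_{(h,+)}$ on $\mD_n\subseteq\mZ$. This yields
\[x \sim_{\preceq_{d^{\around{n}}_{(h,+)}}} y \iff d^{\around{n}}_{(h,+)}(1,x)=d^{\around{n}}_{(h,+)}(1,y).\]
Next, I would evaluate the generating function. For $x\in\mD_n$ with $x\neq 1$, we have $\gcd(1,n)=1$ and $\gcd(x,n)=x$; since $h$ is multiplicative, $h(1)=1$, and thus by Theorem~\refs{coro_metric2},
\[d^{\around{n}}_{(h,+)}(1,x)=2-h(1)-h(x)=1-h(x).\]
The case $x=1$ is covered by $d^{\around{n}}_{(h,+)}(1,1)=0=1-h(1)$ as well.

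Combining the two steps gives
\[x \sim_{\preceq_{d^{\around{n}}_{(h,+)}}} y \iff 1-h(x)=1-h(y) \iff h(x)=h(y).\]
By the standing hypothesis $h(x)=h(y)\Longrightarrow x=y$, this forces $x=y$. Hence every equivalence class $[x]_{\sim_{\preceq_{d^{\around{n}}_{(h,+)}}}}$ reduces to $\{x\}$.

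Finally, I would verify that the map $\rho:\mD_n\to\mD_n\qs\sim_{\preceq_{d^{\around{n}}_{(h,+)}}}$ defined by $x\mapsto[x]_{\sim_{\preceq_{d^{\around{n}}_{(h,+)}}}}$ is a relation-preserving bijection in the sense of the isomorphism definition given before Lemma~\refs{ER_QS}: surjectivity is automatic, injectivity follows from the previous paragraph, and the relation-preservation condition $x=y\iff\rho(x)=\rho(y)$ also holds trivially since both sides coincide with $x\sim_{\preceq_{d^{\around{n}}_{(h,+)}}} y$. No step is a real obstacle; the argument is a direct application of Lemma~\refs{crit_QO} together with the multiplicativity of $h$ (used only to get $h(1)=1$) and the injectivity assumption.
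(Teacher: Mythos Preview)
Your argument is correct and follows exactly the paper's approach: invoke Lemma~\ref{crit_QO} to reduce the equivalence $x \sim_{\preceq_{d^{\around{n}}_{(h,+)}}} y$ to $d^{\around{n}}_{(h,+)}(1,x)=d^{\around{n}}_{(h,+)}(1,y)$, then compute this as $h(x)=h(y)$. The paper's proof is a terse one-liner; you have simply spelled out the computation of $d^{\around{n}}_{(h,+)}(1,x)=1-h(x)$ and the resulting bijection more explicitly.
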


\begin{proof}
Again, by Lemma~\refs{crit_QO}, we get
\vspace*{-3mm}
\[x \sim_{\preceq_{d^{\around{n}}_{(h,+)}}} y \iff d^{\around{n}}_{(h,+)}(1,x) = d^{\around{n}}_{(h,+)}(1,y) \iff h(x) = h(y).\]
\vspace*{-14mm}\\
\end{proof}

\vspace*{3mm}
\begin{rema}
For example, the function $h(x)=1/x$ is admissible, multiplicative, and meets the requirements of the lemma.

However, the existence of functions $h\in\msI^{\around{n}}_{>0}$ with pairwise different function values\lb is not obvious, even if the function values of $h$ on $\mP^*_n$ are pairwise different. Let\lb $x,y\in\mP^*_n$ and $x\perp y$. Then, there could still be $z\in\mP^*_n$ with $h(z)=h(x\cdot y)=h(x)\cdot h(y)$ if $n$ has at least two distinct prime factors and at least three distinct prime power\lb factors, e.g. $n=12$, $h(2)=0.5$, $h(3)=0.6$, $h(4)=0.3$, and also $h(6)=0.3$.

Furthermore, we point out that a function $h\in\msI^{\around{n}}_{=0}$ can only have pairwise different values if $n$ is a prime power. It must vanish at a certain $y\in\mP^*_n$. If there were more than one prime factor of $n$, then all multiples of $y$ would have the same value of $h$, which is zero.
\end{rema}

Every occurrence of the same function values of $h\in\msI$ on $\mD_n$ leads to a coarser partition of $\mD_n$ and thus of $\mZ$. If $h(x)=h(y)$ then the corresponding equivalence classes coincide. The finest imaginable partition is not possible in this case. There are a lot of feasible combinations of equal function values.

On the other hand, in the coarsest non-trivial partition, $\mD_n\qs\sim_{\preceq_{d^{\around{n}}_{(h,+)}}}$ consists of only\vspace{-2mm} two equivalence classes. This can be the case when $h\in\msI^{\around{n}}_{=0}$.

\begin{lemm} \label{coarsest_partition}
Let $n\in\mN>1$.\\
There exists $h\in\msI^{\around{n}}_{=0}$ such that $h(x)=h(y)$ for all $x,y\in\mD_n>1$. 
\end{lemm}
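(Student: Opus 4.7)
The plan is to construct $h$ explicitly by prescribing its values on prime powers, exploiting the fact that a multiplicative function that vanishes at a single prime divisor of $x$ must vanish at $x$ itself. Since the previous remark shows that functions in $\msI^{\around{n}}_{>0}$ cannot have this behaviour (for $n$ with two or more prime factors, the multiplicative structure forces equal values to propagate in complicated ways), landing inside $\msI^{\around{n}}_{=0}$ is not only allowed but necessary as soon as $n$ has a composite divisor.

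I would argue as follows. Fix $n\in\mN>1$ and write $n=\prod_{p\in\mP,\ p\mid n} p^{\alpha_p}$ with $\alpha_p\ge 1$. Define a multiplicative arithmetic function $h:\mN\to\mR$ by specifying it on prime powers via
\[
h(p^k)=\begin{cases} 0 & \text{if } p\mid n \text{ and } k\ge 1,\\ 1 & \text{if } p\nmid n \text{ and } k\ge 1,\end{cases}
\]
together with $h(1)=1$, and extend multiplicatively by $h\!\left(\prod_i p_i^{\alpha_i}\right)=\prod_i h(p_i^{\alpha_i})$. Unique prime factorisation makes this well defined, and the resulting $h$ is multiplicative by construction.

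Next I would verify the three required properties. First, $h\in\msI_0$ because $h(1)=1$, $h$ is multiplicative, and each factor $h(p^k)\in\{0,1\}\subseteq[0,1]$ so $0\le h(x)\le 1$ for every $x\in\mN$. Second, $h\in\msI^{\around{n}}_{=0}$ because, picking any prime $p\mid n$, the prime power $p\in\mP^*_n$ satisfies $h(p)=0$, witnessing the defining condition of $\msI^{\around{n}}_{=0}$. Third, for any $x\in\mD_n$ with $x>1$, there is at least one prime $p\in\mP$ with $p\mid n$ and $p\mid x$; writing $x=p^{k}\cdot m$ with $p\nmid m$ and $k\ge 1$, multiplicativity yields $h(x)=h(p^k)\cdot h(m)=0\cdot h(m)=0$. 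Hence $h(x)=h(y)=0$ for all $x,y\in\mD_n>1$.

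There is essentially no obstacle: the only subtlety is ensuring that $h$ is defined on all of $\mN$ and not merely on $\mD_n$, which is why I extend by setting $h(p^k)=1$ (any value in $[0,1]$ would do) for primes $p$ not dividing $n$. The whole argument relies on the trivial but decisive observation that a multiplicative function vanishing at every prime divisor of $n$ must vanish on $\mD_n\setminus\{1\}$, yielding simultaneously constancy on $\mD_n>1$ and membership in $\msI^{\around{n}}_{=0}$.
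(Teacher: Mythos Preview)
Your construction is correct and is essentially the same as the paper's: set $h$ to vanish on every prime power in $\mP^*_n$ and extend multiplicatively, so that $h(x)=0$ for all $x\in\mD_n>1$. You simply spell out more details (the extension to primes not dividing $n$, and the verification that $h\in\msI_0$) than the paper's two-line proof does. One small caveat: your opening remark that landing in $\msI^{\around{n}}_{=0}$ is \emph{necessary} is not quite right, since the constant function $h\equiv 1$ lies in $\msI^{\around{n}}_{>0}$ and is trivially constant on $\mD_n>1$; but this does not affect the argument, as the lemma explicitly asks for $h\in\msI^{\around{n}}_{=0}$.
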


\begin{proof}
We set $h(x)=0$ for all $x\in\mP^*_n>1$. Then also $h(x\cdot y)=h(x)\cdot h(y)=0$\lb for $x\perp y$. 
\end{proof}

\begin{rema}
There is no equivalent additive function $f\in\msA$ or equivalent multiplicative function $g\in\msM$ since $h(x)=0$ would have to be mapped to infinity.
\end{rema}
\ \\

We complete our investigations of the quotient sets $\mD_n\qs\sim_{\preceq_{d^{\around{n}}_{(h,+)}}}$ by discussing some special cases. The coarsest conceivable partition of $\mD_n$ or $\mZ$ is the trivial partition, in which all elements fall into the same, single equivalence class. It can only be generated by the function $h(x)=\mathit{1}(x)=1$ for all $x\in\mD_n$, cf. Lemma~\refs{coarsest}. Vice versa, the finest partition is achieved if all $h(x)$ are different on $\mD_n$, cf. Lemma~\refs{finest_partition}.

In all non-trivial cases, there are at least two equivalence classes, such as if $n$ is a prime number. For prime powers $n=p^k$, $p\in\mP$, $k\in\mN$, there can be two to $k+1$ equivalence classes depending on the values $h(p^i)$, $i=1,\dots,k$.\\

In Theorem~\refs{coro_metric2}, we proved that every $h\in\msI_1$ generally induces a metric on $\mD_n$. In this case, $1$ always forms its own equivalence class. That is, the class of all integers relatively prime to $n$ cannot coincide with another class containing an integer that has a common divisor with $n$.

\begin{lemm} \label{coprime_extra}
Let $n\in\mN>1$, $h\in\msI_1$, and $x\in\mD_n>1$. Then
\vspace*{-3mm}
\[ [x]_{\sim_{\preceq_{d^{\around{n}}_{h,+)}}}} \cap\  [1]_{\sim_{\preceq_{d^{\around{n}}_{(h,+)}}}} =\ \emptyset . \]
\vspace*{-14mm}\\
\end{lemm}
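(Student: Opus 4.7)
The plan is to argue by contradiction, using the transitivity of the equivalence relation together with the characterisation provided by Lemma~\refs{crit_QO}.

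Suppose there were some $y$ lying in both $[x]_{\sim_{\preceq_{d^{\around{n}}_{(h,+)}}}}$ and $[1]_{\sim_{\preceq_{d^{\around{n}}_{(h,+)}}}}$. Then $x\sim_{\preceq_{d^{\around{n}}_{(h,+)}}} y$ and $y\sim_{\preceq_{d^{\around{n}}_{(h,+)}}} 1$, and transitivity of the equivalence relation (Lemma~\refs{ER_QS}) would force $x\sim_{\preceq_{d^{\around{n}}_{(h,+)}}} 1$. By Lemma~\refs{crit_QO} this is equivalent to $d^{\around{n}}_{(h,+)}(1,1)=d^{\around{n}}_{(h,+)}(1,x)$, i.e.\ to $d^{\around{n}}_{(h,+)}(1,x)=0$.

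The main step is then to evaluate $d^{\around{n}}_{(h,+)}(1,x)$ and derive a contradiction. Since $x\in\mD_n$ we have $\gcd(x,n)=x$, and since $x>1$ we have $x\ne 1$, so by the definition in Theorem~\refs{coro_metric2},
\[
d^{\around{n}}_{(h,+)}(1,x)=2-h\bigl(\gcd(1,n)\bigr)-h\bigl(\gcd(x,n)\bigr)=2-h(1)-h(x)=1-h(x),
\]
using $h(1)=1$ (multiplicativity). The assumption $h\in\msI_2$ means $h(x)<1$ for every $x>1$, hence $1-h(x)>0$, contradicting $d^{\around{n}}_{(h,+)}(1,x)=0$.

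There is really no obstacle here; the short calculation above is the entire content. As a cleaner alternative, one may invoke Theorem~\refs{coro_metric2} directly to note that $d^{\around{n}}_{(h,+)}$ is a metric on $\mD_n$ whenever $h\in\msI_2$, so $d^{\around{n}}_{(h,+)}(1,x)=0$ would immediately force $x=1$, contradicting $x>1$. Either formulation yields the disjointness of the two equivalence classes.
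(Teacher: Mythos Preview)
Your proof is correct and follows essentially the same approach as the paper's: both argue by contradiction, pass from a hypothetical common element to $h(x)=h(1)=1$ (equivalently $d^{\around{n}}_{(h,+)}(1,x)=0$), and then invoke $h\in\msI_2$ to get $h(x)<1$. Your version spells out the transitivity step and the explicit computation of $d^{\around{n}}_{(h,+)}(1,x)=1-h(x)$ more carefully than the paper, which simply writes $h(y)=h(x)=h(1)=1$ directly, but the argument is the same.
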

\begin{proof}
Suppose there exists $y\in [x]_{\sim_{\preceq_{d^{\around{n}}_{(h,+)}}}} \cap\  [1]_{\sim_{\preceq_{d^{\around{n}}_{(h,+)}}}}$. Then,

\[ y \sim_{\preceq_{d^{\around{n}}_{(h,+)}}} x \ \land \ y \sim_{\preceq_{d^{\around{n}}_{(h,+)}}} 1 \quad\iff\quad h(y)=h(x)=h(1)=1\ , \]
in contradiction to $h(x)<1$.
\end{proof}

If we further restrict the function $h$ to be positive, all function values for $x>1$ cannot be equal if $n$ has at least two different prime factors. Therefore, we can conclude that every function $h\in\msI_{01}$, i.e. $0<h(x)<1$ for $x>1$, leads to at least three equivalence classes if $n$ has more than one prime divisor.

\begin{lemm} \label{not_all_equal}
Let $h\in\msI_{01}$ and $n\in\mN$ have at least two prime divisors. Then there exist $x,y\in\mD_n>1$ such that $h(x)\ne h(y)$.
\end{lemm}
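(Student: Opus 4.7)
The plan is to exhibit two explicit witnesses $x,y\in\mD_n$, namely $x=p$ and $y=pq$, where $p,q$ are two distinct primes dividing $n$, and to derive a contradiction from the assumption $h(x)=h(y)$ using the multiplicativity of $h$ together with the strict upper bound $h(z)<1$ for $z>1$ inherent in the definition of $\msI$.

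First, since $n$ has at least two distinct prime divisors by hypothesis, we can fix primes $p\ne q$ with $p\mdiv n$ and $q\mdiv n$. Because $p\copr q$ and both divide $n$, the product $pq$ is also a divisor of $n$. Thus $p,\,pq\in\mD_n$ and both exceed $1$. Suppose towards contradiction that $h(p)=h(pq)$. By the multiplicativity of $h$ and coprimality of $p$ and $q$, we have $h(pq)=h(p)\cdot h(q)$, so the equality $h(p)=h(p)\cdot h(q)$ would force $h(q)=1$ after dividing by $h(p)>0$. But $q>1$ and $h\in\msI$ means $h(q)<1$, a contradiction.

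There is essentially no obstacle; the crucial ingredients are already present in the definition of $\msI$ (namely $h(z)>0$ together with the strict inequality $h(z)<1$ for $z>1$) and in the multiplicative functional equation, and both are used in a single line. The statement should be phrased as the proof directly, with $x=p$ and $y=pq$ as the two required divisors.
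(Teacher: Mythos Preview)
Your proof is correct and follows essentially the same approach as the paper: pick two coprime divisors $x,z>1$ of $n$ and compare $h(x)$ with $h(xz)=h(x)\cdot h(z)$, using $0<h(z)<1$. The paper phrases this directly as $h(xz)=h(x)\cdot h(z)<h(x)$ rather than by contradiction, and uses prime powers in $\mP^*_n$ rather than primes, but these differences are purely cosmetic.
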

\begin{proof}
By assumption, there must exist $x,z\in\mP^*_n>1$ such that $x\perp z$. Then $x$ and $y=x\cdot z$ meet the requirements because $h(y)=h(x\cdot z)=h(x)\cdot h(z)<h(x)$.
\end{proof}
\begin{coro} \label{max_three}
Let $h\in\msI_{01}$, and $n\in\mN$ has at least two prime divisors.  Then, $\mD_n\qs\sim_{\preceq_{d^{\around{n}}_{(h,+)}}}$ has at least three elements.
\end{coro}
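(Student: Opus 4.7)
The plan is to assemble the corollary directly from the two immediately preceding lemmas, since all the work has already been done. I would begin by observing the inclusion $\msI\subset\msI_2$, which is clear from the definitions in Definition~\refs{admissible} (both require $h(x)\le 1$, and strictly $0<h(x)<1$ in $\msI$ implies $0\le h(x)<1$). This inclusion allows me to invoke Lemma~\refs{coprime_extra}, which then gives $[x]_{\sim_{\preceq_{d^{\around{n}}_{(h,+)}}}}\cap[1]_{\sim_{\preceq_{d^{\around{n}}_{(h,+)}}}}=\emptyset$ for every $x\in\mD_n>1$. Hence $[1]_{\sim_{\preceq_{d^{\around{n}}_{(h,+)}}}}$ is distinct from any equivalence class represented by an integer greater than $1$.

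Next, since $n$ has at least two prime divisors and $h\in\msI$, Lemma~\refs{not_all_equal} furnishes concrete elements $x,y\in\mD_n>1$ with $h(x)\ne h(y)$. By the characterisation in Lemma~\refs{crit_QO}, this inequality of function values forces $x\not\sim_{\preceq_{d^{\around{n}}_{(h,+)}}} y$, so the classes $[x]_{\sim_{\preceq_{d^{\around{n}}_{(h,+)}}}}$ and $[y]_{\sim_{\preceq_{d^{\around{n}}_{(h,+)}}}}$ differ.

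Combining both observations yields three pairwise distinct equivalence classes, namely $[1]_{\sim_{\preceq_{d^{\around{n}}_{(h,+)}}}}$, $[x]_{\sim_{\preceq_{d^{\around{n}}_{(h,+)}}}}$, and $[y]_{\sim_{\preceq_{d^{\around{n}}_{(h,+)}}}}$. Consequently $\mD_n\qs\sim_{\preceq_{d^{\around{n}}_{(h,+)}}}$ contains at least three elements, as claimed. There is no real obstacle here: the corollary is essentially a bookkeeping consequence of Lemmata~\refs{coprime_extra} and \refs{not_all_equal}, with the only minor point to verify being the inclusion $\msI\subset\msI_2$ needed to apply the former.
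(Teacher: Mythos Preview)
Your argument is correct and matches the paper's approach exactly: the paper's proof consists solely of the line ``Cf. Lemmata~\ref{not_all_equal}~and~\refs{coprime_extra}'', and you have simply spelled out the details those references encode, including the inclusion $\msI\subset\msI_2$ and the appeal to Lemma~\refs{crit_QO} to translate $h(x)\ne h(y)$ into $x\not\sim_{\preceq_{d^{\around{n}}_{(h,+)}}} y$.
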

\begin{proof}
Cf. Lemmata~\ref{not_all_equal}~and~\refs{coprime_extra}.
\end{proof}
\ \\

Another approach is to characterise special subsets of integers that can be shown\lb to have different function values. We demonstrate the relationship between coprime and squarefree numbers.

\begin{lemm} \label{coprime_squarefree}
Let $h\in\msI$, and $h(x)\ne h(y)$ holds for all coprime $x,y\in\mN$ with $x\ne y$.\\
Then $h(s)\ne h(t)$ also holds for all squarefree $s,t\in\mN$ with $s\ne t$. 
\end{lemm}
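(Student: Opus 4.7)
The approach is a straightforward reduction to the coprime case via a $\gcd$-decomposition. Given distinct squarefree $s, t \in \mN$, set $d = \gcd(s, t)$ and write $s = d \cdot s^*$, $t = d \cdot t^*$. The key arithmetic observation is a triple pairwise coprimality: $s^* \perp t^*$ follows from the definition of $d$, while squarefreeness of $s$ forces $s^* \perp d$ (every prime factor of $d$ appears to the first power in $s$, hence disappears in $s/d$), and symmetrically $t^* \perp d$. This is the only step that genuinely uses the squarefree hypothesis, so I would isolate it as the first building block of the proof.

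Having achieved the decomposition, I apply the multiplicativity of $h \in \msI_0$ to obtain $h(s) = h(d) \cdot h(s^*)$ and $h(t) = h(d) \cdot h(t^*)$. If $d = 1$ then $s = s^*$ and $t = t^*$ are themselves distinct and coprime, so the hypothesis directly delivers $h(s) \ne h(t)$. If $d > 1$, the relation $s \ne t$ forces $s^* \ne t^*$, and since $s^* \perp t^*$ the hypothesis yields $h(s^*) \ne h(t^*)$. Cancelling the common factor $h(d)$ then gives $h(s) \ne h(t)$, provided $h(d) \ne 0$.

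The main obstacle is therefore justifying $h(d) \ne 0$. Since $d$ is squarefree, $h(d)$ factors as $\prod_{p \mid d} h(p)$, so $h(d) = 0$ would require some prime $p \mid d$ with $h(p) = 0$. The hypothesis forbids two distinct primes from simultaneously having $h$-value zero (any two primes are coprime), so at most one exceptional prime $p_0$ can occur. Ruling out this single case is the delicate point: I would either invoke the implicit nonvanishing that is available when $h \in \msI_1$ (a natural restriction suggested by the broader flow of the section), or else attempt a contradiction by pairing $p_0$ with a carefully chosen coprime witness whose $h$-value would then also have to vanish in contradiction with the injectivity hypothesis. I expect this final nonvanishing step, rather than the reduction itself, to require the most care.
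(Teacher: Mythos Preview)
Your reduction via the $\gcd$-decomposition is exactly the paper's argument: write $s=d\cdot s^*$, $t=d\cdot t^*$ with $d=\gcd(s,t)$, observe that squarefreeness makes $d,s^*,t^*$ pairwise coprime, apply multiplicativity, and cancel $h(d)$ to reduce to the coprime pair $(s^*,t^*)$. The paper performs this cancellation without comment; you are right to flag it as the real obstacle.

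In fact the obstacle is fatal: the lemma is false for $h\in\msI_0$ as stated, and neither your proposed workaround nor the paper's silent cancellation can be repaired. Set $h(2^a)=0$ for all $a\ge1$ and choose the values $h(p^k)\in(0,1)$ for odd primes $p$ so that the numbers $-\log h(p^k)$ are linearly independent over $\mathbb{Q}$. Then $h\in\msI_0$, and for any coprime pair $x\ne y$ at most one of them is even; the even one has $h$-value $0$ while every odd number has strictly positive $h$-value, and two distinct coprime odd numbers have distinct $h$-values by the linear-independence choice. So the hypothesis holds. Yet $h(6)=h(2)h(3)=0=h(2)h(5)=h(10)$ with $6,10$ squarefree and distinct. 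The single exceptional prime $p_0=2$ that you worried about really does break the argument, and no ``carefully chosen coprime witness'' can force a contradiction, because all numbers coprime to $p_0$ genuinely have nonzero, pairwise distinct $h$-values.

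Your instinct to restrict to $\msI_1$ is the correct fix: there $h(d)>0$ always, the cancellation is legitimate, and both your proof and the paper's go through verbatim. (The additive and $\msM_0$ analogues mentioned in the paper's remark are likewise sound, since subtraction is always defined and $g(d)\ge1>0$.) Every application of the lemma in the paper's examples section is to a function in $\msI_1$ or $\msI$, so the defect in the stated generality is harmless in context---but it is a defect.
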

\begin{proof}
If $h(x)\ne h(y)$ applies to all coprime $x,y$, then also to all coprime and squarefree $x,y$. Suppose there are squarefree $s,t\in\mN$ with $s\ne t$ and $h(s)=h(t)$. Then all prime divisors of $s$ and $t$ are single. Thus, $\frac{s}{\gcd(s,t)}$, $\gcd(s,t)$, and $\frac{t}{\gcd(s,t)}$ are squarefree and pairwise coprime. We get \vspace*{-3mm}
\[h(s)=h\left(\frac{s}{\gcd(s,t)}\cdot\gcd(s,t)\right)=h\left(\frac{s}{\gcd(s,t)}\right)\cdot h\bigl(\gcd(s,t)\bigr).\]\\[-2.5ex]
An analogous equation applies to $t$. The common factor can be reduced to \vspace*{-3mm}
\[h\left(\frac{s}{\gcd(s,t)}\right)=h\left(\frac{t}{\gcd(s,t)}\right),\]\\[-2.5ex]
which contradicts the assumption.
\end{proof}
\begin{rema}
In other words: If there are equal function values for different squarefree integers, then also for different coprime numbers.

The assertion applies accordingly to all other admissible functions $f\in\msA$ and \lb $g\in\msM$.
\end{rema}
\ \\

We derived some specific types of preorders on $\mZ$ that are induced by admissible arithmetic functions. In Proposition~\refs{permutation2}, we showed that the corresponding quotient sets are invariant against permutation of the values of the function. The function\lb values of $h\in\msI_1$ on $\mP^*_n$ can be chosen arbitrarily within the defined range. This can lead to many equivalence classes. We discussed several special cases.

The question remains how many such types of preorders there are for a given $n$.\lb It is the same question as how many partitions $\mD_n\qs\sim_{\preceq_{d^{\around{n}}}}$ or $\mZ\qs\sim_{\preceq_{d^{\around{n}}}}$ exist.

There are a lot of combinatorial possibilities. In the concluding remarks below,\lb we will bound their number by using some arithmetic functions, which we will\lb discuss in the next section.

\ \\\\

%%%%%%%%%%%%%%%%%%%%%%%%%%%%%%%%%%%%%%%%%%%%%%%%%%%%%%%

%%%%%%%%%%%%%%%%%%%%%%%%%%%%%%%%%%%%%%%%%%%%%%%%%%%%%%%
\section{Examples\phj} \label{examples}
%%%%%%%%%%%%%%%%%%%%%%%%%%%%%%%%%%%%%%%%%%%%%%%%%%%%%%%

Many arithmetic functions are commonly known and widely studied. For the\lb general case, we refer to textbooks \cite{Hildebrand_2013, Hardy_Wright_1975, McCarthy_1986, Schwarz_Spilker_1994, Cira_Smarandache_2016}. Unless otherwise specified, we set $x,k,n\in\mN$, $p\in\mP$. \enquote{$\log$} denotes the natural logaritm. We use the standard prime factorisation of $x\in\mN$
\[x=\prod_{i=1}^\infty p_i^{\alpha_i}, \quad p_i\in\mP,\alpha_i\in\mN_0.\]
The squarefree kernel or radical of $n$ is the product of distinct prime divisors of $x$.
\[\rad(x)=\prod_{p_i\mid x} p_i=\prod_{\alpha_i\ge1} p_i.\]

Several arithmetic functions are not admissible in the sense of Definition~\refs{admissible}.\lb However, some of those functions can easily be modified to result in admissible\lb functions, as we will demonstrate below. 
For other established arithmetic functions, no admissible functions derived from them are known. We want to point out these five examples that are neither additive nor multiplicative.

\exhead{Prime counting function}
\begin{align*} \tabalign
\pi(x) =\ &\sum_{p\le x} 1.
\end{align*}
\exhead{Mangoldt function}
\begin{align*} \tabalign
\Lambda(x) =\ &\begin{cases}
	\log(p) & \text{if \ } \exists \ p,k\ :\ x=p^k,\\ 0 & \text{otherwise.} \end{cases}
\end{align*}
\exhead{Chebyshev functions}
\begin{align*} \tabalign
\theta(x) =\ &\sum_{p\le x} \log(p). \\
\psi(x) =\ &\sum_{k=1}^{\infty}\sum_{p^k\le x} \log(p) = \sum_{n\le x} \Lambda(n).
\end{align*}
\exhead{Partition function}
\begin{align*} \tabalign
p(x) =\ &\Bigl| \bigl\{ (a_1,\dots,a_x) : 0<a_1\le\dots\le a_x \land a_1+\dots+a_x=x \bigr\} \Bigr|. \\
&\text{The function represents the number of possible partitions} \\
&\tb\text{of a natural number } x.
\end{align*}

Furthermore, for example, no non-trivial admissible function derived from the\lb totally multiplicative Liouville function is known. It distinguishes between natural numbers with an odd or even number of prime divisors (with repetition), see also $\Omega(x)$.

\exhead{Liouville function}
\begin{align*} \tabalign
\lambda(x) =\ &(-1)^{\sum_{i=1}^{\infty} \alpha_i}.\\
\end{align*}

We know that additive arithmetic functions together with pointwise addition form\lb a group with the identity element $\mathit{0}(x)\equiv0$ whereas multiplicative arithmetic\lb functions $g$ where $g(x)\ne0$ for all $x\in\mN$ together with pointwise multiplication form a group with the identity element $\mathit{1}(x)\equiv1$. $\bigl(\msA_0,+\bigr)$, $\bigl(\msM_0,\,\cdot\,\bigr)$, and $\bigl(\msI_1,\,\cdot\,\bigr)$ are subsemigroups of them, respectively.

The easiest way to modify such functions is to use operations inside these\lb semigroups. Pointwise sums or products can thus result in admissible functions.\lb Inverse elements are also candidates. Products of general multiplicative functions are suitable as well.

Below, we present examples of admissible functions related to well-established\lb additive or multiplicative arithmetic functions and summarise some relevant\lb properties. For functions belonging to $\msA_0$, $\msM_0$, or $\msI_1$, we additionally indicate the\lb corresponding functions of the triple characterised in Corollary~\refs{triple}.\\\\

The modification of an additive or multiplicative function described above always leads to another additive or multiplicative function, respectively. However, the\lb modification principles can also be applied to general functions that are neither\lb additive nor multiplicative and can nevertheless lead to admissible functions. We\lb begin with one such extraordinary example.

The following arithmetic functions have been defined using functional equations corresponding to those that apply to derivatives of real or complex functions. The arithmetic derivative \cite{Barbeau_1961, Ufnarovski_Ahlander_2003, Lava_Balzarotti_2013} is recursively defined by $D(1)=0$, $D(p)=1$\lb for $p\in\mP$, and $D(x\cdot y)=x\cdot D(y)+ D(x)\cdot y$ for $x,y\in\mN$. The respective logarithmic derivative \cite{Ufnarovski_Ahlander_2003, Lava_Balzarotti_2013} reads $\ld(1)=0$, $\ld(p)=1/p$ for $p\in\mP$, and $\ld(x\cdot y)=\ld(x)+ \ld(y)$\lb for $x,y\in\mN$ in analogy to the derivation of the logarithmic function. We get explicit representations when we use the standard prime factorisation.

The arithmetic derivative is neither additive nor multiplicative while the logarithmic derivative is an admissible and totally additive arithmetic function. It turns out to be the pointwise product of $D(x)$ and the admissible totally multiplicative function $1/x$ which results in an admissible totally additive function - a curious coincidence. Furthermore, we prove a special property of this function.

\exhead{Arithmetic derivative \emph{\cite{Barbeau_1961, Ufnarovski_Ahlander_2003, Lava_Balzarotti_2013}}}
\begin{align*} \tabalign
D(x) =\ &x\cdot\sum_{i=1}^{\infty} \frac{\alpha_i}{p_i}.
\end{align*}
\exhead{Logarithmic derivative \emph{\cite{Ufnarovski_Ahlander_2003, Lava_Balzarotti_2013}}}
\begin{align*} \tabalign
\ld(x) =\ &\frac{D(x)}{x} = \sum_{i=1}^{\infty} \frac{\alpha_i}{p_i}. \\
\ld(x) \in\ &\msA, \text{ totally additive.} \\
&\text{Equal function values occur for coprime numbers, e.g.: } \ld(4)=\ld(27)=1. \\
&\text{The function values for squarefree numbers are pairwise distinct}. \\
\msM\ni\  &\e^{\ld(x)}=\e^{\sum_{i=1}^{\infty} \frac{\alpha_i}{p_i}}=\prod_{i=1}^{\infty} \e^{\frac{\alpha_i}{p_i}}, \text{ totally multiplicative triple function}.\\
\msI\ni\  &\e^{-\ld(x)}=\e^{-\sum_{i=1}^{\infty} \frac{\alpha_i}{p_i}}=\prod_{i=1}^{\infty} \e^{-\frac{\alpha_i}{p_i}}, \text{ totally multiplicative triple function}.
\end{align*}

\begin{lemm}
All function values $\ld(x)$ for squarefree $x\in\mN$ are pairwise distinct.
\end{lemm}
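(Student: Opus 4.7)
My plan is to reduce to a statement purely about sums of reciprocals of primes, then apply a standard ``largest prime'' argument.

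First, I would recall the explicit formula from the display above: for $x = \prod_{i=1}^\infty p_i^{\alpha_i}$ with $\alpha_i \in \{0,1\}$ (since $x$ is squarefree), we have
\[ \ld(x) = \sum_{i=1}^\infty \frac{\alpha_i}{p_i} = \sum_{p \mid x} \frac{1}{p}. \]
So for two squarefree integers $s, t$, the equality $\ld(s) = \ld(t)$ becomes
\[ \sum_{p \in S} \frac{1}{p} = \sum_{p \in T} \frac{1}{p}, \qquad S := \{p \in \mP : p \mid s\},\ T := \{p \in \mP : p \mid t\}. \]
Cancelling common primes on both sides, I get disjoint sets $S' := S \setminus T$ and $T' := T \setminus S$ with $\sum_{p \in S'} 1/p = \sum_{p \in T'} 1/p$. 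Since $s \neq t$ would mean $S \neq T$, at least one of $S', T'$ is nonempty; but a nonempty sum of positive reciprocals cannot equal an empty one, so both $S'$ and $T'$ are nonempty (or both empty, forcing $s = t$).

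Now comes the main step, which is the only substantive part of the argument. Suppose for contradiction that $S', T'$ are both nonempty and disjoint. Let $q$ be the largest prime in $S' \cup T'$; by symmetry assume $q \in S'$, and put $N := \prod_{p \in S' \cup T'} p$. Multiplying the equation by $N$ yields
\[ \sum_{p \in S'} \frac{N}{p} = \sum_{p \in T'} \frac{N}{p}, \]
an equality of integers. For every $p \in T'$ the integer $N/p$ is divisible by $q$ (because $q \in S'$ is one of the other prime factors of $N$), so the right-hand side is $\equiv 0 \pmod{q}$. On the left-hand side, the term $N/q$ is a product of primes all strictly less than $q$ (they all lie in $(S' \cup T') \setminus \{q\}$), hence is coprime to $q$; every other term $N/p$ with $p \in S' \setminus \{q\}$ is divisible by $q$. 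Therefore the left-hand side is $\equiv N/q \not\equiv 0 \pmod{q}$, a contradiction.

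The only real obstacle is the mod-$q$ step, and the choice of the \emph{largest} prime is what makes it work: it guarantees that $q$ does not appear in $N/q$ as a factor but does appear in every $N/p$ for $p \neq q$. Having reached a contradiction, $S' = T' = \emptyset$, so $S = T$, and by squarefreeness this forces $s = t$. \qed
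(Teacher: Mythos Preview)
Your proof is correct and rests on the same modular idea as the paper's, though the packaging differs. The paper does not first reduce to disjoint prime sets; instead it shows directly that for every squarefree $z>1$ the fraction $\ld(z)=\bigl(\sum_{p\mid z} z/p\bigr)\big/z$ is already in lowest terms---for each prime $q\mid z$, every summand in the numerator except $z/q$ is divisible by $q$, so no prime factor of $z$ can cancel. Distinct squarefree integers then yield fractions with distinct reduced denominators, hence distinct values. Your route (cancel common primes, then derive a contradiction mod a surviving prime) is slightly more direct and is in effect the reduction of Lemma~\ref{coprime_squarefree} carried out by hand. One minor remark: you do not actually need $q$ to be the \emph{largest} prime---since $N$ is a product of distinct primes, $N/q$ is automatically coprime to $q$ for any $q\in S'\cup T'$, so any prime in the symmetric difference would do.
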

\begin{proof}
Let $z\in\mN$ be squarefree. Then $\alpha_i\le1$ for all $i\in\mN$ and $\ld(z) = \sum_{p|z} \frac{1}{p}$.

We first prove that for all squarefree $z\in\mN>1$, $\ld(z)$ can be written as an\lb irreducible fraction. Its denominator is the product of the respective prime numbers dividing $z$. If $z\in\mP$ then $\ld(z)=\frac{1}{z}$ satisfies the claim. Now let $z$ have more than one prime divisor and $q\in\mP$ with $q|z$. By assumption, we get 

\[\ld(z) = \sum_{p|z} \frac{1}{p}=\frac{{\displaystyle\sum_{p|z}} \frac{z}{p}}{z}=
\frac{{\displaystyle\sum_{p|z \atop p\ne q}} \frac{z}{p}+\frac{z}{q}}{{\displaystyle\prod_{p|z}}p}=
\frac{q\cdot{\displaystyle\sum_{p|z\atop p\ne q}}\frac{z}{p\cdot q}+{\displaystyle\prod_{p|z\atop p\ne q}}p} {q\cdot{\displaystyle\prod_{p|z\atop p\ne q}}p}\ .\]\\[-0.5ex]
All but one of the summands of the numerator contain $q$. The other addend is a\lb product of other prime numbers and is not divisible by q. So, the fractions cannot be reduced.

Given $x,y\in\mN>1 $ with $x\ne y$. Then, $x$ and $y$ have distinct prime factors. The irreducible fractions $\ld(x)$ and $\ld(y)$ must be different. Furthermore, $\ld(1)=0$ and $\ld(z)>0$ for all $z>1$.
\end{proof}
\ \\[-1ex]

There are three simple functions which form a triple according to Corollary~\ref{triple} themselves. They result in the finest possible partition, described in Lemma~\refs{finest_partition}.

\exhead{Logarithmic function}
\begin{align*} \tabalign
\log(x)=\ &f(x) \in\msA, \text{ totally additive triple function.} \\
&\text{All function values are pairwise distinct}.
\end{align*}\vspace{-4mm}
\exhead{Identical function}
\begin{align*} \tabalign
x=\ &g(x) \in\msM, \text{ totally multiplicative triple function.} \\
&\text{All function values are pairwise distinct}.
\end{align*}\vspace{-4mm}
\exhead{Reciprocal function}
\begin{align*} \tabalign
\frac{1}{x}=\ &h(x) \in\msI, \text{ totally multiplicative triple function.} \\
&\text{All function values are pairwise distinct}. \\
\end{align*}

The following collection includes established additive functions related to\lb counts and sums of prime divisors. These functions are themselves admissible. No modification is needed.

Authors use different notations for the summation functions. We introduce\lb the Greek uppercase and lowercase letters upsilon following the given notation\lb of counting functions.

In case of squarefree $n$, both functions of each pair coincide for $x\in\mN$. Then\lb $\Omega(x)=\omega(x)$ and $\Upsilon(x)=\upsilon(x)$.

\exhead{Number of prime divisors (with repetition)}
\begin{align*} \tabalign
\Omega(x) =\ &\Bigl|\bigl\{z=p^k : p\in\mP \land k\in\mN \land z\mdiv x\bigr\}\Bigr|= \sum_{i=1}^{\infty} \alpha_i. \\
&\text{The function represents the number of different prime powers dividing } n.\\
\Omega(x) \in\ &\msA, \text{ totally additive}. \\
&\text{Equal function values occur for squarefree and thus also for coprime} \\
&\tb\text{numbers, e.g.: } \Omega(14)=\Omega(15)=2. 
\end{align*}
\vspace{-18mm}
\begin{align*} \tabalign
\msM \ni\ &\e^{\Omega(x)}=\e^{\sum_{i=1}^{\infty} \alpha_i}=\prod_{i=1}^{\infty} \e^{\alpha_i}, \text{ totally multiplicative triple function}. \\
\msI \ni\ &\e^{-\Omega(x)}=\e^{-\sum_{i=1}^{\infty} \alpha_i}=\prod_{i=1}^{\infty} \e^{-\alpha_i}, \text{ totally multiplicative triple function}.
\end{align*}
\exhead{Number of distinct prime divisors}
\begin{align*} \tabalign
\omega(x) =\ &\Bigl|\bigl\{p\in\mP : p\mdiv x\bigr\}\Bigr|= \sum_{p | x} 1. \\
\omega(x) \in\ &\msA, \text{ additive}. \\
&\text{Equal function values occur for squarefree and thus also for coprime} \\
&\tb\text{numbers, e.g.: } \omega(3)=\omega(5)=1. \\
\msM \ni\ &\e^{\omega(x)}=\e^{\sum_{p | x} 1}, \text{ multiplicative triple function}. \\
\msI \ni\ &\e^{-\omega(x)}=\e^{-\sum_{p | x} 1}, \text{ multiplicative triple function}.
\end{align*}
\exhead{Sum of prime divisors (with repetition) \emph{\cite{Chawla_1968, Lal_1969, Alladi_Erdoes_1977, Jakimczuk_2012}}}
\begin{align*} \tabalign
\Upsilon(x) =\ &\sum_{i=1}^{\infty} \alpha_i\cdot p_i. \\
\Upsilon(x) \in\ &\msA, \text{ totally additive}. \\
&\text{Equal function values occur for squarefree and thus also for coprime} \\
&\tb\text{numbers, e.g.: } \Upsilon(7)=\Upsilon(10)=7. \\
\msM\ni\ &\e^{\Upsilon(x)}=\e^{\sum_{i=1}^{\infty} \alpha_i\cdot p_i}=\prod_{i=1}^{\infty} \e^{\alpha_i\cdot p_i}, \text{ totally multiplicative triple function}. \\
\msI\ni\ &\e^{-\Upsilon(x)}=\e^{-\sum_{i=1}^{\infty} \alpha_i\cdot p_i}=\prod_{i=1}^{\infty} \e^{-\alpha_i\cdot p_i}, \text{ totally multiplicative triple function}.
\end{align*}
\exhead{Sum of distinct prime divisors \emph{\cite{Kerawala_1969, Alladi_Erdoes_1977}}}
\begin{align*} \tabalign
\upsilon(x) =\ &\sum_{p | x} p. \\
\upsilon(x) \in\ &\msA, \text{ additive}. \\
&\text{Equal function values occur for squarefree and thus also for coprime} \\
&\tb\text{numbers, e.g.: } \upsilon(5)=\upsilon(6)=5. \\
\msM \ni\ &\e^{\upsilon(x)}=\e^{\sum_{p | x} p}=\prod_{p | x} \e^{p}, \text{ multiplicative triple function}. \\
\msI \ni\ &\e^{-\upsilon(x)}=\e^{-\sum_{p | x} p}=\prod_{p | x} \e^{-p}, \text{ multiplicative triple function}. \\
\end{align*}

We continue with multiplicative functions which count or sum divisors or\lb coprimes. All these functions are members of $\msM_0$. In addition, we demonstrate some modifications. Especially, each of them can be normalised to a function of $\msI_1$ with interesting properties.

\exhead{Number of divisors}
\begin{align*} \tabalign
\nd(x) =\ &\Bigl|\bigl\{d\in\mN : d\mdiv x\bigr\}\Bigr|=\sum_{d\mid x} 1= \prod_{i=1}^{\infty} (1+\alpha_i). \\
&\nd(x) = 2^{\omega(x)} \text{ for squarefree } x. \\
\nd(x) \in\ &\msM, \text{ multiplicative}. \\
&\text{Equal function values occur for squarefree and thus also for coprime} \\
&\tb\text{numbers, e.g.: } \nd(14)=\nd(15)=4. \\
\msA \ni\ &\log(\nd(x))=\sum_{i=1}^{\infty} \log(1+\alpha_i), \text{ additive triple function}. \\
\msI \ni\ &\frac{1}{\nd(x)}=\prod_{i=1}^{\infty} \frac{1}{1+\alpha_i}, \text{ multiplicative triple function}.
\end{align*}
\exhead{Normalised number of divisors, variant 1}
\begin{align*} \tabalign
\frac{\nd(x)}{x} =\ &\frac{1}{x}\cdot\prod_{i=1}^{\infty} (1+\alpha_i) \in\msI_1, \text{ multiplicative}. \\
&\text{Equal function values occur, e.g.: } \frac{\nd(1)}{1}=\frac{\nd(2)}{2}=1 \text{ and } \frac{\nd(8)}{8}=\frac{\nd(12)}{12}=\frac{1}{2}. \\
&\text{The function values for coprime numbers } x,y>1 \text{ are pairwise distinct} \\
&\tb\text{and so for squarefree numbers } x,y>1 \text{ according to Lemma~\refs{coprime_squarefree}}. \\
\msA_0 \ni\ &-\log(\frac{\nd(x)}{x})=\log(x)-\log\bigl(\nd(x)\bigr)=\log(x)-\sum_{i=1}^{\infty} \log(1+\alpha_i), \\
&\text{ additive triple function}. \\
\msM_0 \ni\ &\frac{x}{\nd(x)}=x\cdot\prod_{i=1}^{\infty} \frac{1}{1+\alpha_i}, \text{ multiplicative triple function}.
\end{align*}
\begin{lemm}
The function values ${\displaystyle\frac{\nd(x)}{x}}$ and ${\displaystyle\frac{\nd(y)}{y}}$ for coprime $x,y\in\mN>1$ with $x\ne y$\lb are distinct.
\end{lemm}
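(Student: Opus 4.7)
The plan is to prove the claim by contradiction. Suppose $\nd(x)/x = \nd(y)/y$ for coprime $x,y\in\mN>1$ with $x\ne y$. Cross-multiplying yields $y\cdot\nd(x)=x\cdot\nd(y)$. Since $x\mid x\cdot\nd(y)=y\cdot\nd(x)$ and $\gcd(x,y)=1$, I would invoke the coprimality directly to conclude $x\mid\nd(x)$, and symmetrically $y\mid\nd(y)$.

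The second step is to combine this divisibility with the elementary bound $1\le\nd(x)\le x$, valid because every divisor of $x$ is a positive integer not exceeding $x$. Together with $x\mid\nd(x)$, this forces $\nd(x)=x$. A quick inspection of the definition shows that $\nd(x)=x$ can only hold if every integer in $\{1,\ldots,x\}$ divides $x$, and this already fails at $x=3$; hence $\nd(x)=x$ implies $x\in\{1,2\}$. Under the hypothesis $x>1$ this forces $x=2$, and the same reasoning gives $y=2$, contradicting $x\ne y$.

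I do not expect any substantial obstacle here; the argument is really just a short divisibility-plus-bounding exercise. The one point that deserves careful wording is the sharpness of $\nd(x)\le x$, i.e.\ that equality holds precisely at $x\in\{1,2\}$, since that is what collapses the divisibility relation $x\mid\nd(x)$ into the forced equality $x=2$. Everything else follows mechanically from the multiplicativity-free manipulations, so the proof should be only a few lines.
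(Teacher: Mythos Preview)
Your proof is correct and rests on the same two facts the paper uses: coprimality of $x$ and $y$ forces a divisibility constraint between $x$ and $\nd(x)$, and the bound $\nd(z)\le z$ with equality only at $z\in\{1,2\}$ then collapses the possibilities.

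The packaging differs slightly. The paper first treats the case where one of $x,y$ equals $2$ separately (observing $\nd(2)/2=1$ while $\nd(z)/z<1$ for $z>2$), and for $x,y>2$ argues that the irreducibly reduced fractions $\nd(x)/x$ and $\nd(y)/y$ have denominators whose prime divisors lie among those of $x$ and $y$ respectively, hence are disjoint and both nontrivial. Your route is more uniform: cross-multiplying and invoking $\gcd(x,y)=1$ gives $x\mid\nd(x)$ and $y\mid\nd(y)$ directly, whence $\nd(x)=x$ and $\nd(y)=y$, forcing $x=y=2$. This avoids the case split and the language of reduced fractions, so it is a bit cleaner, but the underlying arithmetic is identical.
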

\begin{proof}
We have ${\displaystyle\frac{\nd(2)}{2}=1}$, and ${\displaystyle\frac{\nd(z)}{z}=\prod_{i=1}^{\infty} \frac{1+\alpha_i}{p_i^{\alpha_i}}<1}$ for all $z\in\mN>2$. \\

The assertion holds if $x=2$ or $y=2$. Let now $x,y\in\mN>2$ be distinct and coprime.\\\\\\ Then, $x$ and $y$ have no common prime divisor. The denominators of the irreducibly\lb reduced fractions ${\displaystyle\frac{\nd(x)}{x}}$ and ${\displaystyle\frac{\nd(y)}{y}}$ must also have different prime divisors. Thus,\lb both terms are different.
\end{proof}
\exhead{Normalised number of divisors, variant 2}
\begin{align*} \tabalign
\frac{\nd(x)}{x^2} =\ &\frac{1}{x^2}\cdot\prod_{i=1}^{\infty} (1+\alpha_i) \in\msI, \text{ multiplicative}. \\
&\text{Equal function values occur,} \\
&\tb\text{e.g.: } \frac{\sigma(30000)}{30000^2}=\frac{4+1}{2^8}\cdot\frac{1+1}{3^2}\cdot\frac{4+1}{5^8}=\frac{1}{2^7\cdot3^2\cdot5^6} \\
&\tb\text{and } \frac{\sigma(36000)}{36000^2}=\frac{5+1}{2^{10}}\cdot\frac{2+1}{3^4}\cdot\frac{3+1}{5^6}=\frac{1}{2^7\cdot3^2\cdot5^6}\,. \\
&\text{The function values for coprime numbers are pairwise distinct} \\
&\tb\text{and so for squarefree numbers according to Lemma~\refs{coprime_squarefree}}. \\
\msA \ni\ &-\log(\frac{\nd(x)}{x^2})=2\cdot\log(x)-\log\bigl(\nd(x)\bigr), \text{ additive triple function}.\\
\msM \ni\ &\frac{x^2}{\nd(x)}=x^2\cdot\prod_{i=1}^{\infty} \frac{1}{1+\alpha_i}, \text{ multiplicative triple function}.
\end{align*}
\begin{lemm}
The function values ${\displaystyle\frac{\nd(x)}{x^2}}$ and ${\displaystyle\frac{\nd(y)}{y^2}}$ for coprime $x,y\in\mN$ with $x\ne y$\lb are distinct.
\end{lemm}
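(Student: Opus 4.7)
The strategy is a short divisibility argument resting on the elementary bound $\nd(n)\le n$, which holds because every divisor of $n$ lies in the set $\{1,2,\dots,n\}$. First I dispose of the boundary case: if $x=1$ then $\nd(x)/x^2=1$, while for any $y\ge 2$ one has $\nd(y)\le y<y^2$, hence $\nd(y)/y^2<1$; symmetry handles $y=1$.

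For the main case $x,y\ge 2$, I argue by contradiction. Assume $\nd(x)/x^2=\nd(y)/y^2$, i.e.\ $\nd(x)\cdot y^2=\nd(y)\cdot x^2$. Coprimality of $x$ and $y$ yields $\gcd(x^2,y^2)=1$, so $x^2\mid \nd(x)$. But $\nd(x)\le x<x^2$ for $x\ge 2$ is incompatible with $x^2\mid\nd(x)$ (since $\nd(x)>0$), a contradiction.

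I foresee no real obstacle; the argument is elementary. An alternative route mirroring the preceding lemma would compare denominators of the reduced fractions: since $(1+\alpha_i)/p_i^{2\alpha_i}<1$ for every $p_i\ge 2$ and $\alpha_i\ge 1$, one has $\nd(x)/x^2<1$ for $x>1$, so its reduced denominator exceeds $1$; yet this denominator must simultaneously divide $x^2$ and $y^2$, forcing it to divide $\gcd(x^2,y^2)=1$. I prefer the first approach because it sidesteps subtleties about which prime factors survive cancellation between the numerator $\prod(1+\alpha_j)$ and the denominator $\prod p_i^{2\alpha_i}$, a phenomenon that does occur in general.
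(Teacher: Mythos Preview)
Your proof is correct and rests on the same core observation as the paper: $\nd(z)/z^2<1$ for $z>1$, combined with coprimality, forces an impossible divisibility. The paper phrases this via reduced fractions (the irreducible denominators of $\nd(x)/x^2$ and $\nd(y)/y^2$ carry only primes of $x$ and $y$ respectively, hence can only coincide if both equal~$1$), whereas you cross-multiply to obtain $x^2\mid\nd(x)$ directly; your formulation is cleaner and sidesteps exactly the cancellation subtlety you flag, but the underlying idea is the same.
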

\begin{proof}
We have ${\displaystyle\frac{\nd(1)}{1^2}=1}$, and ${\displaystyle\frac{\nd(z)}{z^2}=\prod_{i=1}^{\infty} \frac{1+\alpha_i}{p_i^{2\cdot\alpha_i}}<1}$ for all $z\in\mN>1$. \\[-0.5ex]

The assertion holds if $x=1$ or $y=1$. Let now $x,y\in\mN>1$ be distinct and coprime.\lb Then, $x$ and $y$ have no common prime divisor. The denominators of the irreducibly\lb reduced fractions ${\displaystyle\frac{\nd(x)}{x^2}}$ and ${\displaystyle\frac{\nd(y)}{y^2}}$ must also have different prime divisors. Thus,\lb both terms are different.
\end{proof}

\exhead{Sum of divisors}
\begin{align*} \tabalign
\sigma(x) =\ &\sum_{d | x} d = \prod_{i=1}^{\infty} \frac{p_i^{\alpha_i+1}-1}{p_i-1}\,. \\
\sigma(x) \in\ &\msM, \text{ multiplicative}. \\
&\text{Equal function values occur for squarefree and thus also for coprime} \\
&\tb\text{numbers, e.g.: } \sigma(6)=\sigma(11)=12. \\
\end{align*}
\vspace{-15mm}
\begin{align*} \tabalign
\msA \ni\ &\log(\sigma(x)), \text{ additive triple function}.\\
\msI \ni\ &\frac{1}{\sigma(x)} = \prod_{i=1}^{\infty} \frac{p_i-1}{p_i^{\alpha_i+1}-1}, \text{ multiplicative triple function}.
\end{align*}
\exhead{Modified sum of divisors}
\begin{align*} \tabalign
\frac{\sigma(x)}{x} =\ &\frac{1}{x}\cdot\prod_{i=1}^{\infty} \frac{p_i^{\alpha_i+1}-1}{p_i-1} \in\msM, \text{ multiplicative}. \\
&\text{Sum of reciprocal divisors :   } \frac{\sigma(x)}{x} = \frac{1}{x}\cdot\sum_{d | x} d = \frac{1}{x}\cdot\sum_{d | x} \frac{x}{d} = \sum_{d | x} \frac{1}{d}\,. \\
&\text{Equal function values occur, e.g.: } \frac{\sigma(6)}{6}=\frac{\sigma(28)}{28}=2 \text{ (perfect numbers)}, \\
&\tb\text{and }\frac{\sigma(30)}{30}=\frac{\sigma(120)}{120}=2.4 \text{ (abundant numbers)}. \\
&\text{There are no known equal function values for different coprime numbers}. \\
&\tb\text{Pairwise distinct function values for coprime numbers are conjectured}. \\
&\text{The function values for coprime numbers are pairwise distinct} \\
&\tb\text{if there were no odd multiply perfect numbers}. \\
&\text{The function values for squarefree numbers are pairwise distinct}. \\
\msA \ni\ &\log(\frac{\sigma(x)}{x})=\log\bigl(\sigma(x)\bigr)-\log(x), \text{ additive triple function}.\\
\msI \ni\ &\frac{x}{\sigma(x)} = x\cdot\prod_{i=1}^{\infty} \frac{p_i-1}{p_i^{\alpha_i+1}-1}, \text{ multiplicative triple function}.
\end{align*}
\begin{lemm} \label{odd_coprime}
The function values ${\displaystyle\frac{\sigma(x)}{x}}$ and ${\displaystyle\frac{\sigma(y)}{y}}$ for coprime $x,y\in\mN$ with $x\ne y$\lb are distinct if there were no odd multiply perfect numbers.
\end{lemm}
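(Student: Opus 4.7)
The plan is to exploit the multiplicativity of $\sigma(x)/x$ together with a denominator argument to reduce the claim to a statement about multiply perfect numbers, and then invoke the hypothesis.

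First, I would note that $\sigma(x)/x$ written as an irreducible fraction $a_x/b_x$ must have denominator $b_x$ dividing $x$: this is because $\sigma(x) = (a_x/b_x)\cdot x$ is an integer and $\gcd(a_x,b_x)=1$, forcing $b_x \mid x$. Analogously $b_y \mid y$, and the coprimality assumption $x \perp y$ gives $b_x \perp b_y$. If we now assume toward a contradiction that $\sigma(x)/x = \sigma(y)/y$, then $a_x/b_x = a_y/b_y$ in lowest terms, so $b_x = b_y$, which combined with $b_x \perp b_y$ forces $b_x = b_y = 1$. In other words, both quotients are \emph{integers}.

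Next, I would translate this into the language of multiply perfect numbers. Let $k = \sigma(x)/x = \sigma(y)/y \in \mathbb{N}$. If $k=1$, then $\sigma(x)=x$; but for every $x>1$ we have $1$ and $x$ as distinct divisors, giving $\sigma(x) \ge 1+x > x$, so $x=1$, and symmetrically $y=1$, contradicting $x \ne y$. Hence $k \ge 2$, and both $x$ and $y$ are $k$-perfect numbers in the classical sense.

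Finally, I would apply the hypothesis. Under the assumption that no odd multiply perfect numbers exist, every $k$-perfect number with $k\ge 2$ must be even, so $2 \mid x$ and $2 \mid y$, which contradicts $\gcd(x,y)=1$. This contradiction establishes the lemma. The entire argument is short; the only real content is the denominator reduction in the first step, and even that is routine once one observes that $b_x\cdot b_y$ must divide $\gcd(x,y)=1$. No step looks like a serious obstacle.
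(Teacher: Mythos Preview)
Your proof is correct and follows essentially the same route as the paper: both reduce to the observation that the lowest-terms denominators $b_x\mid x$ and $b_y\mid y$ are coprime, so equality of the two abundancy ratios forces both to be integers, whence both $x$ and $y$ are multiply perfect and (under the hypothesis) even, contradicting $x\perp y$. One small slip: your closing parenthetical that ``$b_x\cdot b_y$ must divide $\gcd(x,y)=1$'' is not literally true as stated---what you actually use (and correctly argue earlier) is that $b_x=b_y$ together with $\gcd(b_x,b_y)=1$ forces $b_x=b_y=1$.
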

\begin{proof}
For all $z\in\mN$, we get
\vspace*{-2mm}
\[\frac{\sigma(x)}{z} = \frac{1}{z}\cdot\prod_{i=1}^{\infty} \frac{p_i^{\alpha_i+1}-1}{p_i-1} = \prod_{i=1}^{\infty} \frac{p_i^{\alpha_i+1}-1}{p_i^{\alpha_i}\cdot(p_i-1)} = \prod_{i=1}^{\infty} \frac{\sum_{k=0}^{\alpha_i} p_i^k}{p_i^{\alpha_i}}\,.\] 
\vspace*{-2mm} \\
Furthermore, ${\displaystyle\frac{\sigma(1)}{1}}=1$, and ${\displaystyle\frac{\sigma(z)}{z}}>1$ for $z>1$.
\vspace*{-3mm} \\

A multiply perfect number $m\in\mN$ divides its sum of divisors. i.e. ${\displaystyle\frac{\sigma(m)}{m}}\in\mN$.\lb If there were no odd multiply perfect numbers, they would all be even and share\lb the common divisor $2$. Therefore, there would not be two different and coprime\lb $x,y\!\in\!\mN>1$ such that the corresponding function values ${\displaystyle\frac{\sigma(y)}{y}}$ and ${\displaystyle\frac{\sigma(y)}{y}}$ are integers.

\pagebreak
Let now $x,y\in\mN$ be distinct and coprime. Then, $x$ and $y$ have no common prime divisor. The assertion of the lemma holds if $x=1$ or $y=1$. Otherwise, for $x,y>1$,\lb the denominators of the irreducibly reduced fractions ${\displaystyle\frac{\sigma(x)}{x}}$ and ${\displaystyle\frac{\sigma(y)}{y}}$ have distinct prime divisors if one of them is not an integer. Thus, both terms must be different\lb if the assumption on odd multiply perfect numbers holds.
\end{proof}

\vspace*{-2mm}
\begin{lemm}
All function values ${\displaystyle\frac{\sigma(x)}{x}}$ for squarefree $x\in\mN$ are pairwise distinct.
\end{lemm}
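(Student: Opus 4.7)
The plan is to argue by contradiction, first reducing to the coprime squarefree case by canceling the common factor, and then deriving an arithmetic obstruction by examining the largest prime dividing $xy$. Suppose $x\ne y$ are squarefree with $\sigma(x)/x = \sigma(y)/y$, and set $g=\gcd(x,y)$. Since $x$ is squarefree, $g$ and $x/g$ are coprime, so by multiplicativity of $\sigma(n)/n$ we have $\sigma(x)/x = \bigl(\sigma(x/g)/(x/g)\bigr)\cdot\bigl(\sigma(g)/g\bigr)$, and analogously for $y$. Canceling $\sigma(g)/g$ leaves $\sigma(x/g)/(x/g)=\sigma(y/g)/(y/g)$ with $x/g,\,y/g$ coprime, squarefree, and still distinct. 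If one of them equals $1$, say $y/g=1$, then $\sigma(x/g)/(x/g)=1$ forces $x/g=1$, contradicting distinctness. Hence we may assume $\gcd(x,y)=1$ with $x,y>1$, both squarefree.

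Next I would let $p$ be the largest prime dividing $xy$, and without loss of generality take $p\mdiv x$ (so $p\ndiv y$). Cross-multiplying yields $\sigma(x)\cdot y = \sigma(y)\cdot x$; since $p\mdiv x$ but $p\ndiv y$, this forces $p\mdiv\sigma(x)$. As $x$ is squarefree with $p\mdiv x$, we have $\sigma(x)=(p+1)\sigma(x/p)$, and $\gcd(p,p+1)=1$ then gives $p\mdiv\sigma(x/p)=\prod_{q\in\mP,\,q\mid x/p}(q+1)$. Therefore $p\mdiv q+1$ for some prime $q\mid x/p$. But $q\ne p$ and $q\le p$ (since $p$ is the largest prime of $xy$), so $q<p$; combined with $p\mdiv q+1$ this forces $q=p-1$. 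The only prime $p$ for which $p-1$ is also prime is $p=3$ (with $q=2$).

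This leaves only the possibility that the largest prime of $xy$ is at most $3$, i.e.\ $x,y\in\{1,2,3,6\}$. A direct check of the four values $\sigma(1)/1=1$, $\sigma(2)/2=3/2$, $\sigma(3)/3=4/3$, $\sigma(6)/6=2$ shows them to be pairwise distinct, yielding the final contradiction.

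The main obstacle is setting up the reduction cleanly: the coprimality of $g$ and $x/g$ uses squarefreeness in an essential way, and the key step $p\mdiv\sigma(x/p)$ relies on $p$ appearing with exact multiplicity one in $x$, which again hinges on squarefreeness. The largest-prime argument is otherwise routine once the exceptional small prime $p=3$ is isolated via the elementary observation that the only consecutive pair of primes is $(2,3)$.
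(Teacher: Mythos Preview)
Your proof is correct and follows essentially the same largest-prime strategy as the paper: both reduce to the coprime squarefree case and then show that the largest prime dividing $xy$ creates an obstruction (the paper phrases this as that prime surviving in the reduced denominator of one fraction but not the other, while you cross-multiply and chase divisibility through $\sigma(x)$, isolating the edge case $p=3$ by a finite check). One tacit point worth making explicit: when you pass from $p\mid\sigma(x/p)$ to ``$p\mid q+1$ for some prime $q\mid x/p$'' you are implicitly assuming $x/p>1$; if $x=p$ the product is empty and $p\mid\sigma(x/p)=1$ already gives the contradiction, so the argument is unaffected.
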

\begin{proof}
Let $z\in\mN$ be squarefree. Then $\alpha_i\le1$ for all $i\in\mN$, and
\vspace*{-2mm}
\[\frac{\sigma(z)}{z} = \frac{1}{z}\cdot\prod_{p|z} (p+1) = \prod_{p|z}  \frac{p+1}{p}\ .\]
\vspace*{-4mm} \\
According to Lemma~\refs{coprime_squarefree}, it is sufficient to prove the assertion for coprime and\lb squarefree $x,y\in\mN$.

Let now $x,y\in\mN$ be squarefree and coprime. Then, all prime divisors of them are single factors, and $x$ and $y$ have no common prime divisor. W.l.o.g. let $q\in\mP>2$ be the largest prime divisor of $y$, and all prime divisors of $x$ are smaller than $q$. In the case of $q=2$, we get $x=1$ and $y=2$ with $1=\frac{\sigma(1)}{1}\ne\frac{\sigma(2)}{2}=\frac{3}{2}$.

All prime factors of the denominator of ${\displaystyle\frac{\sigma(x)}{x}=\prod_{p | x} \frac{p+1}{p}=\frac{\prod_{p |x} (p+1)}{\prod_{p | x} p}}$ are smaller than $q$. This also applies in the case of $x=1$. However, the denominator\lb of ${\displaystyle\frac{\sigma(y)}{y}=\prod_{p | y} \frac{p+1}{p}=\frac{\prod_{p | y} (p+1)}{\prod_{p | y} p}}$ contains $q$, and $q$ cannot be reduced because\lb all factors $p+1$ in the numerator are composite and their prime divisors are smaller than $q$. Thus, both terms must be different.
\end{proof}

\exhead{Normalised sum of divisors}
\begin{align*} \tabalign
\frac{\sigma(x)}{x^2} =\ &\frac{1}{x^2}\cdot\prod_{i=1}^{\infty} \frac{p_i^{\alpha_i+1}-1}{p_i-1} \in\msI, \text{ multiplicative}. \\
&\text{No equal function values are known}. \\
&\tb\text{Pairwise distinct function values are conjectured}. \\
&\text{The function values for coprime numbers are pairwise distinct} \\
&\tb\text{and so for squarefree numbers according to Lemma~\refs{coprime_squarefree}}. \\
\msA \ni\ &-\log\left(\frac{\sigma(x)}{x^2}\right)=2\cdot\log(x)-\log\bigl(\sigma(x)\bigr), \text{ additive triple function}.\\
\msM \ni\ &\frac{x^2}{\sigma(x)} = x^2\cdot\prod_{i=1}^{\infty} \frac{p_i-1}{p_i^{\alpha_i+1}-1}, \text{ multiplicative triple function}.
\end{align*}

\begin{lemm}
The function values ${\displaystyle\frac{\sigma(x)}{x^2}}$ and ${\displaystyle\frac{\sigma(y)}{y^2}}$ for coprime $x,y\in\mN$ with $x\ne y$\lb are distinct.
\end{lemm}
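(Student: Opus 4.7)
The plan is to reduce the claim to the elementary bound $\sigma(z)<z^2$ for $z>1$ and then exploit coprimality directly. This is a slightly cleaner route than the irreducible-denominator analysis carried out for $\frac{\nd(x)}{x^2}$ immediately above, though that approach would also work here after noting that the reduced denominator of $\sigma(z)/z^2$ has prime divisors only among those of $z$.

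For the bound, observe that for any $z\in\mN>1$ the divisors of $z$ form a subset of $\{1,\dots,z\}$, hence
\[ \sigma(z)\ \le\ 1+2+\dots+z\ =\ \frac{z(z+1)}{2}\ <\ z^2. \]
Consequently $\sigma(z)/z^2<1$ for every $z\in\mN>1$, whereas $\sigma(1)/1^2=1$. This already settles the claim in the degenerate case where $x=1$ or $y=1$, since then the two values are $1$ and something strictly smaller than $1$.

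For the main case, let $x,y\in\mN>1$ be coprime with $x\ne y$, and suppose towards a contradiction that $\sigma(x)/x^2=\sigma(y)/y^2$. Cross-multiplying yields $\sigma(x)\cdot y^2=\sigma(y)\cdot x^2$. Coprimality of $x$ and $y$ gives $\gcd(x^2,y^2)=1$, so the divisibility $x^2\mid \sigma(x)\cdot y^2$ forces $x^2\mid \sigma(x)$. But $1\le\sigma(x)<x^2$ by the bound established above, contradicting $x^2\mid \sigma(x)$.

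The only substantive arithmetic input is the strict inequality $\sigma(z)<z^2$ for $z\in\mN>1$; once that is in hand, the rest collapses to a one-line divisibility argument, and I do not foresee any genuine obstacle.
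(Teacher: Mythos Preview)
Your proof is correct. Both your argument and the paper's rest on the same two ingredients: the strict bound $\sigma(z)/z^2<1$ for $z>1$, and the coprimality of $x$ and $y$. The paper argues that the irreducibly reduced fractions $\sigma(x)/x^2$ and $\sigma(y)/y^2$ have denominators supported on the primes of $x$ and $y$ respectively, hence on disjoint prime sets, so the two reduced forms cannot coincide. You instead cross-multiply and use $\gcd(x^2,y^2)=1$ to force $x^2\mid\sigma(x)$, which the bound rules out. Your route is marginally more direct: it avoids invoking uniqueness of reduced fractions and makes the contradiction explicit in one line. The paper's phrasing, on the other hand, is the same template it reuses for $\nd(x)/x^2$ and $\varphi(x)/x^2$, so it gains uniformity across the adjacent lemmata. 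Either way the content is the same.
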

\begin{proof}
For all $z\in\mN$, we get
\vspace*{-3mm}
\[\frac{\sigma(x)}{z^2} = \frac{1}{z^2}\cdot\prod_{i=1}^{\infty} \frac{p_i^{\alpha_i+1}-1}{p_i-1} = \prod_{i=1}^{\infty} \frac{p_i^{\alpha_i+1}-1}{p_i^{2\cdot\alpha_i}\cdot(p_i-1)} = \prod_{i=1}^{\infty} \frac{\sum_{k=0}^{\alpha_i} p_i^k}{p_i^{2\cdot\alpha_i}}\,.\] 
\vspace*{-4mm} \\
Furthermore, ${\displaystyle\frac{\sigma(1)}{1^2}}=1$, and ${\displaystyle\frac{\sigma(z)}{z^2}}<1$ for $z>1$.
\vspace*{-2mm} \\

Let now $x,y\in\mN$ be distinct and coprime. Then, $x$ and $y$ have no common\lb prime divisor. The assertion holds if $x=1$ or $y=1$. Otherwise, for $x,y>1$, the denominators of the irreducibly reduced fractions ${\displaystyle\frac{\sigma(x)}{x^2}}$ and ${\displaystyle\frac{\sigma(y)}{y^2}}$ have distinct prime divisors. Thus, both terms must be different.
\end{proof}

\exhead{Euler's totient function}
\begin{align*} \tabalign
\varphi(x) =\ &\Bigl|\bigl\{k\in\mN\le x\ :\ k\perp x\bigr\}\Bigr|= x\cdot\prod_{p | x} \left(1-\frac{1}{p}\right). \\
&\text{Number of natural numbers } \le n \text{ that are relatively prime to  } n. \\
\varphi(x) \in\ &\msM_0, \text{ multiplicative}. \\
&\text{Equal function values occur for squarefree and thus also for coprime} \\
&\tb\text{numbers, e.g.: } \varphi(1)=\varphi(2)=1 \text{ and } \varphi(13)=\varphi(21)=12. \\
\msA_0 \ni\ &\log\bigl(\varphi(x)\bigr) = \log(x)+\sum_{p | x} \log(1-p)-\sum_{p | x} \log(p), \text{ additive triple function}.\\
\msI_1 \ni\ &\frac{1}{\varphi(x)} = \frac{1}{x}\mdot\prod_{p | x} \!\left(\frac{p}{p-1}\right)\! = \frac{1}{x}\mdot\prod_{p | x} \!\left(1+\frac{1}{p-1}\right)\!, \!\text{ multiplicative triple function}.
\end{align*}
\exhead{Normalised Euler's function, variant 1}
\begin{align*} \tabalign
\frac{\varphi(x)}{x}  =\ &\prod_{p | x} \left(1-\frac{1}{p}\right) = \frac{\varphi(\rad(x))}{\rad(x)}\ \in \msI, \text{  multiplicative}. \\
&\text{Equal function values occur, e.g.: } \frac{\varphi(6)}{6}=\frac{\varphi(12)}{12}=\frac{1}{3}. \\
&\text{The function values for coprime numbers are pairwise distinct} \\
&\tb\text{and so for squarefree numbers according to Lemma~\refs{coprime_squarefree}}. \\
\msA \ni\ &-\log\left(\frac{\varphi(x)}{x}\right)=\log\biggl(\prod_{p | x} \frac{p}{1-p}\biggr)=\sum_{p | x} \log\left(1+\frac{1}{p-1}\right), \\ 
&\text{ additive triple function}.  \\
\end{align*}
\vspace{-16mm}
\begin{align*} \tabalign
\msM \ni\ &\frac{x}{\varphi(x)} = \prod_{p | x} \left(\frac{p}{p-1}\right) =\prod_{p | x} \left(1+\frac{1}{p-1}\right), \text{ multiplicative triple function}.
\end{align*}
\begin{lemm}
The function values ${\displaystyle\frac{\varphi(x)}{x}}$ and ${\displaystyle\frac{\varphi(y)}{y}}$ for coprime $x,y\in\mN$ with $x\ne y$\lb are distinct.
\end{lemm}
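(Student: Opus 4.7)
The approach mirrors that used earlier for $\frac{\sigma(x)}{x^2}$: since $\frac{\varphi(z)}{z}=\prod_{p \mid z}\frac{p-1}{p}$ depends only on the squarefree kernel of $z$, the plan is to write it as a single fraction and compare the prime factorisations of the two denominators after reduction to lowest terms. The key observation is that the largest prime divisor $q$ of the larger argument will necessarily appear in its reduced denominator but not in the reduced denominator of the other fraction, because $x \perp y$ forces the sets of primes involved on each side to be disjoint.

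Concretely, I would first dispose of the trivial case $x=1$ (symmetrically $y=1$): then $\frac{\varphi(1)}{1}=1$, whereas $\frac{\varphi(y)}{y}<1$ for every $y>1$, so the two values differ. For the main case take $x,y \in \mN$ with $x,y>1$, $x \perp y$, and $x \ne y$, and let $q$ be the largest prime divisor of $x \cdot y$. Coprimality forces $q$ to divide exactly one of the two numbers; without loss of generality $q \mid y$, so every prime divisor of $x$ is strictly less than $q$. The representation $\frac{\varphi(x)}{x}=\frac{\prod_{p \mid x}(p-1)}{\prod_{p \mid x} p}$ then has a denominator consisting only of primes strictly less than $q$, hence coprime to $q$, and the same holds after reduction to lowest terms.

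The main step will be to show that $q$ really survives in the reduced denominator of $\frac{\varphi(y)}{y}=\frac{\prod_{p \mid y}(p-1)}{\prod_{p \mid y} p}$. The factor $q$ is visibly present in the denominator; for the numerator, every factor $p-1$ with $p \mid y$ satisfies $1 \le p-1 \le q-1 < q$, and since $q$ is prime no such positive integer below $q$ can be divisible by $q$, so the entire product $\prod_{p \mid y}(p-1)$ is coprime to $q$. Consequently $q$ divides the reduced denominator of $\frac{\varphi(y)}{y}$ but not that of $\frac{\varphi(x)}{x}$, forcing the two irreducible fractions, and thus the two function values, to differ. I expect this last step---the verification that $q$ cannot cancel out of the denominator---to be the only delicate point; everything else is routine bookkeeping following the template of the analogous lemmas above.
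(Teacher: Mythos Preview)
Your proof is correct and follows essentially the same approach as the paper: both arguments use the representation $\frac{\varphi(z)}{z}=\prod_{p\mid z}\frac{p-1}{p}$, pick the largest prime $q$ dividing one of the two coprime arguments, and show that $q$ survives in the reduced denominator on that side (since every numerator factor $p-1<q$) but cannot appear on the other side. The only cosmetic difference is that the paper first explicitly reduces to the squarefree case via $\frac{\varphi(z)}{z}=\frac{\varphi(\rad(z))}{\rad(z)}$, whereas you work directly with the product formula, which already depends only on the set of prime divisors.
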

\begin{proof}
For all $z\in\mN$, we have ${\displaystyle\frac{\varphi(z)}{z}  = \frac{\varphi\bigl(\rad(z)\bigr)}{\rad(z)}}$. Furthermore, ${\displaystyle\frac{\varphi(1)}{1}}=1$, and\lb ${\displaystyle\frac{\sigma(z)}{z}}<1$ for $z>1$. Therefore, it is sufficient to prove the assertion for coprime and squarefree $x,y\in\mN$.

Let now $x,y\in\mN$ be squarefree and coprime. Then, all prime divisors of them are single factors, and $x$ and $y$ have no common prime divisor. W.l.o.g. let $q\in\mP$ be the largest prime divisor of $y$, and all prime divisors of $x$ are smaller than $q$.\vspace*{2mm}

All prime factors of the denominator of ${\displaystyle\frac{\varphi(x)}{x}=\prod_{p | x} \left(1-\frac{1}{p}\right)=\frac{\prod_{p |x} (p-1)}{\prod_{p | x} p}}$ are\vspace*{1mm}\lb smaller than $q$. This also applies in the case of $x=1$. However, the denominator\vspace*{1mm} of ${\displaystyle\frac{\varphi(y)}{y}=\prod_{p | y} \left(1-\frac{1}{p}\right)=\frac{\prod_{p | y} (p-1)}{\prod_{p | y} p}}$ contains $q$, and $q$ cannot be reduced because\lb all factors $p-1$ of the numerator are smaller than $q$. Thus, both terms must be\lb different.
\end{proof}
\exhead{Normalised Euler's function, variant 2}
\begin{align*} \tabalign
\frac{\varphi(x)}{x^2}  =\ &\frac{1}{x}\cdot\prod_{p | x} \left(1-\frac{1}{p}\right) \in \msI, \text{  multiplicative}. \\
&\text{No equal function values are known}. \\
&\tb\text{Pairwise distinct function values are conjectured}. \\
&\text{The function values for coprime numbers are pairwise distinct} \\
&\tb\text{and so for squarefree numbers according to Lemma~\refs{coprime_squarefree}}. \\
\msA \ni\ &-\log\left(\frac{\varphi(x)}{x^2}\right)=\log\biggl(\prod_{p | x} \frac{p}{1-p}\biggr)-\log(x), \text{ additive triple function}. \\
\msM \ni\ &\frac{x^2}{\varphi(x)} = x\mdot\prod_{p | x} \!\left(\frac{p}{p-1}\right) \!=x\mdot\prod_{p | x} \!\left(1+\frac{1}{p-1}\right)\!, \text{ multiplicative triple function}.
\end{align*}
\begin{lemm}
The function values ${\displaystyle\frac{\varphi(x)}{x^2}}$ and ${\displaystyle\frac{\varphi(y)}{y^2}}$ for coprime $x,y\in\mN$ with $x\ne y$\lb are distinct.
\end{lemm}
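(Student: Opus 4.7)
The plan is to mirror the proofs of the preceding two lemmas (for $\sigma(x)/x^2$ and $\varphi(x)/x$), both of which reduce the equality question to a comparison of irreducible fractions via a largest-prime argument. First I would write out the explicit formula
\[
\frac{\varphi(z)}{z^2}\ =\ \frac{1}{z}\cdot\prod_{p\mid z}\frac{p-1}{p}\ =\ \frac{\displaystyle\prod_{p\mid z}(p-1)}{\displaystyle\prod_{p\mid z} p^{\alpha_p+1}},
\]
where $z=\prod_p p^{\alpha_p}$ is the standard prime factorisation. Next I would observe $\varphi(1)/1^2=1$ and $\varphi(z)/z^2<1$ for $z>1$ (since $\varphi(z)\le z-1<z^2$), which dispatches the case where one of $x,y$ equals $1$. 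So I may assume $x,y>1$ throughout the main argument.

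For the main case, with $x,y>1$ distinct and coprime, I would pick $q$ to be the largest prime dividing $xy$. Since $x\perp y$, the prime $q$ divides exactly one of them; without loss of generality $q\mid y$ and $q\nmid x$. The key claim is that $q$ survives as a factor of the denominator of $\varphi(y)/y^2$ after full reduction with multiplicity $\alpha_q+1\ge2$, whereas $q$ appears neither in the numerator nor in the denominator of $\varphi(x)/x^2$. Equality of the two reduced fractions would then be impossible, giving the conclusion.

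The crux of the argument is the non-cancellation step, which is where I expect the only real work to lie, and it is completely analogous to what was done for $\frac{\varphi(x)}{x}$. For every prime $p$ dividing $xy$, maximality of $q$ gives $p\le q$, hence $p-1\in\{1,\ldots,q-1\}$, so $q\nmid(p-1)$. Applied to the factors of $\prod_{p\mid y}(p-1)$, this shows $q$ never occurs in the numerator of $\varphi(y)/y^2$, so the $q^{\alpha_q+1}$ in the denominator cannot be cancelled. Applied to the factors of $\prod_{p\mid x}(p-1)$ (together with $q\nmid x$), it shows $q$ occurs in neither the numerator nor the denominator of $\varphi(x)/x^2$. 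The irreducible forms therefore have different sets of prime divisors in their denominators, so the two values must be distinct, completing the proof.
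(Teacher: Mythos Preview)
Your argument is correct, but it does more work than the paper's own proof. You isolate the largest prime $q\mid xy$ and verify explicitly that it survives in the reduced denominator on one side and is absent on the other. The paper instead just observes that $\varphi(z)/z^2\in(0,1)$ for every $z>1$, so the reduced denominator is always $>1$; since the unreduced denominator of $\varphi(x)/x^2$ is built only from primes dividing $x$ (and likewise for $y$), the two reduced denominators have their prime supports in disjoint sets and are both nontrivial, hence cannot coincide. No ``largest prime'' is needed here, in contrast to the $\varphi(x)/x$ case where the paper does use that trick (there the reduction to squarefree $x$ leaves only one copy of each prime in the denominator, so one really must argue that at least one survives). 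Your approach works and transfers the $\varphi(x)/x$ template verbatim; the paper's approach exploits the extra factor of $x$ in the denominator to shortcut the cancellation analysis.
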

\begin{proof}
For all $z\in\mN$, we get
\vspace*{-3mm}
\[\frac{\varphi(x)}{z^2} = \frac{1}{z}\cdot\prod_{p | z} \left(1-\frac{1}{p}\right) = \frac{1}{z}\cdot\prod_{i=1\atop p_i|z}^{\infty} \frac{p_i-1}{p_i} = \prod_{i=1\atop p_i|z}^{\infty} \frac{p_i-1}{p_i^{\alpha_i+1}}\,.\] 
\vspace*{-5mm} \\
Furthermore, ${\displaystyle\frac{\varphi(1)}{1^2}}=1$, and ${\displaystyle\frac{\varphi(z)}{z^2}}<1$ for $z>1$.
\vspace*{-2mm} \\

Let now $x,y\in\mN$ be distinct and coprime. Then, $x$ and $y$ have no common prime divisor. The assertion holds if $x=1$ or $y=1$. Otherwise, for $x,y>1$, the denominators of the irreducibly reduced fractions ${\displaystyle\frac{\varphi(x)}{x^2}}$ and ${\displaystyle\frac{\varphi(y)}{y^2}}$ have distinct prime divisors. Thus, both terms must be different.
\end{proof}

\exhead{Pillai's function \emph{\cite{Pillai_1933, Broughan_2001, Toth_2010}}}
\begin{align*} \tabalign
P(x) =\ &\sum_{k=1}^{x} \gcd(k,x) = \sum_{d|x} d\mdot\varphi(\frac{x}{d}) = x\mdot\prod_{i=1}^{\infty} \Biggl(1+\alpha_i\mdot\left(1-\frac{1}{p_i}\right)\Biggr). \\
P(x) \in\ &\msM, \text{ multiplicative}. \\
&\text{Equal function values occur for squarefree and thus also for coprime} \\
&\tb\text{numbers, e.g.: } P(15)=P(23)=45. \\
\msA \ni\ &\log(P(x)) = \log(x)+\sum_{i=1}^{\infty} \log\Biggl(1+\alpha_i\mdot\left(1-\frac{1}{p_i}\right)\Biggr), \text{ additive triple function}.\\
\msI \ni\ &\frac{1}{P(x)}, \text{ multiplicative triple function}.
\end{align*}
\exhead{Normalised Pillai's function}
\begin{align*} \tabalign
\frac{P(x)}{x\mdot\nd(x)} =\ &\prod_{i=1}^{\infty} \frac{\left(1-\frac{1}{p_i}\right)\mdot\alpha_i+1}{\alpha_i+1} = \prod_{i=1}^{\infty} \left(1-\frac{\alpha_i}{p_i\mdot(\alpha_i+1)}\right) \in \msI, \text{  multiplicative}. \\
&\text{Equal function values occur for squarefree and thus also for coprime} \\
&\tb\text{numbers, e.g.: } \frac{P(2)}{2\mdot\nd(2)}=\frac{P(15)}{15\mdot\nd(15)}=\frac{3}{4}. \\
\msA \ni\ &-\log\left(\frac{P(x)}{x\mdot\nd(x)}\right)=\sum_{i=1}^{\infty} \log\left(\frac{\alpha_i+1}{\left(1-\frac{1}{p_i}\right)\mdot\alpha_i+1}\right)= \\
&=\sum_{i=1}^{\infty} \log\left(1+\frac{\alpha_i}{p_i\mdot(\alpha_i+1)-\alpha_i}\right), \text{ additive triple function}. \\
\msM \ni\ &\frac{x\mdot\nd(x)}{P(x)} = \prod_{i=1}^{\infty} \frac{\alpha_i+1}{\left(1-\frac{1}{p_i}\right)\mdot\alpha_i+1} = \prod_{i=1}^{\infty} \left(1+\frac{\alpha_i}{p_i\mdot(\alpha_i+1)-\alpha_i}\right), \\
&\text{ multiplicative triple function}.
\end{align*}

Our final examples are admissible multiplicative functions bounded by $[0,1]$ that contain zero function values or can be modified or normalised to such. Those functions $h_0\in\msI_0\setminus\msI_1$ do not belong to any triple in the sense of Corollary~\refs{triple}.

\exhead{M\"obius function}
\begin{align*} \tabalign
\mu(x) =\ &\begin{cases}
	(-1)^{\omega(x)} & \text{for squarefree }x, \\
	0 & \text{else}, \end{cases} \text{\qquad multiplicative}. \\
&\text{The function indicates whether the number of prime divisors} \\
&\tb\text{of a squarefree natural number is odd or even}. \\
&\text{It is not admissible because e.g. } \mu(5)<0.
\end{align*}
\exhead{Modified M\"obius function}
\begin{align*} \tabalign
\mu^2(x) =\ &\begin{cases}
	(-1)^{2\cdot\omega(x)}=1 & \text{for squarefree }x, \\
	0 & \text{else}, \end{cases} \qquad	\in\ \msI_0\setminus\msI_1, \text{  multiplicative}. \\
&\text{The function indicates squarefree numbers}. \\
&\text{It leads to only two equivalence classes}.
\end{align*}
\exhead{Normalised M\"obius function}
\begin{align*} \tabalign
\frac{\mu^2(x)}{x} =\ &\frac{\bigl|\mu(x)\bigr|}{x} = \begin{cases}
	\frac{1}{x} & \text{for squarefree }x, \\
	0 & \text{else}, \end{cases} \qquad	\in\ \msI_0\setminus\msI_1, \text{  multiplicative}. \\
&\text{Equal function values occur for coprime numbers, e.g. } \frac{\mu^2(4)}{4}=\frac{\mu^2(9)}{9}=0. \\
&\text{The function values for squarefree numbers are pairwise distinct,} \\
&\tb\text{ as well as for } \frac{1}{x} .
\end{align*}

\exhead{Principal Dirichlet character of modulus $k\in\mN>1$}
\begin{align*} \tabalign
\chi_k(x) =\ &\begin{cases}
	1 & \text{for }\gcd(x,k)=1, \\
	0 & \text{for }\gcd(x,k)>1. \end{cases} \\
&\text{The function indicates the coprimeness to } k. \\
&\chi_k(x) = \chi_k\bigl(\rad(x)\bigr), \quad \chi_k(r\mdot k+1)=1,\ r\in\mN. \\
\chi_k(x) \in\ &\ \msI_0\setminus\msI_1, \text{ completely multiplicative}. \\
&\text{It leads to only two equivalence classes}.
\end{align*}
\exhead{Normalised principal Dirichlet character of modulus $k\in\mN>1$}
\begin{align*} \tabalign
\frac{\chi_k(x)}{x} =\ &\begin{cases}
	\frac{1}{x} & \text{for }\gcd(x,k)=1, \\
	0 & \text{for }\gcd(x,k)>1, \end{cases} \qquad \in\ \msI_0\setminus\msI_1, \text{  completely multiplicative}. \\
&\text{Equal function values occur for squarefree and thus also for coprime}\\
&\tb\text{ numbers, if $k$ has at least two distinct prime divisors $p$ and $q$.}\\
&\tb\text{Then e.g.: } \frac{\chi_k(p)}{p}=\frac{\chi_k(q)}{q}=0. \\
&\text{The function values for squarefree numbers  that are coprime to } k, \text{are} \\
&\tb\text{pairwise distinct, as well as for } \frac{1}{x} .
\end{align*}

\exhead{Unit function}
\begin{align*} \tabalign
\epsilon(x) =\ &\begin{cases}
	1 & \text{for }x=1, \\
	0 & \text{for }x>1, \end{cases} \text{\qquad completely multiplicative}. \\
&\text{The function represents the identity element of the Dirichlet convolution}. \\
&\text{It is the product of the Dirichlet characters: }\ \epsilon(x) = \prod_{k=2}^{\infty} \chi_k(x) = \prod_{p\in\mP} \chi_p(x). \\
\epsilon(x) \in\ &\ \msI_0\setminus\msI_1, \text{  completely multiplicative}. \\
&\text{The function values for } x>1 \text{ are all the same}. \\
&\text{It leads to only two equivalence classes}.
\end{align*}

The unit function $\epsilon(x)$ coincides with the special  principal Dirichlet character $\chi_n(x)$ on $\mD_n$ for all $n$. So, both functions lead to the same equivalence classes. According to Lemma~\refs{coarsest_partition}, we get $h(x)=\chi_n(x)\in\msI^{\around{n}}_{=0}$. By definition, these functions lead to one of the coarsest possible partition of $\mZ$ where $\mZ\qs\sim_{\preceq_{d^{\around{n}}_{(h,+)}}}$ consists of only two equivalence classes. The class $[1]$ includes all numbers coprime to $n$, and the class $[n]$ comprises all numbers sharing a common divisor with $n$. 

In the general case including $k\ne n$ however, the resulting partition also consists of only two equivalence classes because $1$ is equivalent to all numbers coprime to $k\cdot n$, and $n$ is equivalent to all numbers sharing a common divisor with $k\cdot n$. 

\pagebreak

%%%%%%%%%%%%%%%%%%%%%%%%%%%%%%%%%%%%%%%%%%%%%%%%%%%%%%%

%%%%%%%%%%%%%%%%%%%%%%%%%%%%%%%%%%%%%%%%%%%%%%%%%%%%%%%
\section*{Concluding remarks}
\addcontentsline{toc}{section}{\phj Concluding remarks\phj}
\stepcounter{section}
%%%%%%%%%%%%%%%%%%%%%%%%%%%%%%%%%%%%%%%%%%%%%%%%%%%%%%%

In this paper, we have demonstrated the construction of pseudometrics and preorders on the set of integral numbers given an admissible arithmetic function. Integers\lb can hereby be ordered in terms of their divisibility properties with respect to a given modulus $n\in\mN$. Various examples have been compiled in the previous section.

We raised the issue of what types of preorders there are for a specific $n$. Another question is which admissible functions cause the same preorder on $\mZ$ or the same quotient set $\mZ\qs\sim_{\preceq_{d^{\around{n}}}}$.\\

We have already proved some essential statements related to the characterisation\lb of the considered preorders.

According to Lemma~\ref{extended_pseudometric} in combination with Corollary~\refs{coro_metric1}, the function values of an arithmetic function for all $x\in\mD_n$ uniquely determine the induced pseudometric\lb or preorder on $\mZ$. Just knowing the function values for all $x\in\mP^*_n$ is sufficient\lb for admissible arithmetic functions that are either additive or multiplicative by\lb Definition~\refs{admissible}.

In Lemma~\ref{permutation1} and Proposition~\refs{permutation2}, we proved that the quotient sets concerning the induced preorders are invariant with respect to permutations of the function values\lb of the respective admissible functions for all $x\in\mP^*_n$.\\

In Section ~\refs{AFO}, we examined special constellations of admissible functions and\lb moduli $n$ that lead to a different number of equivalence classes. A trivial function where all function values are equal results in only one equivalence class, namely\lb the trivial partition. Apart from that, the coarsest possible partition contains only two different classes, $[1]$ and $[n]$, such as if $n$ is a prime number.  Another example constellation was considered in Lemma~\refs{coprime_extra}.

The finest possible partition can be achieved if all function values on $\mD_n$ are\lb different, as described in Lemma~\refs{finest_partition}. All possibilities are conceivable between the extreme variants. A special case was examined in Corollary~\refs{max_three}. If $n=p^k$ is a prime power, then all feasible partitions are possible. The function values can be chosen accordingly.

In general, a finer partition can distinguish between more items of the divisibility\lb properties with respect to the given $n$. A coarse partition with only two classes,\lb however, indicates a single specific property.
\ \\\\

In the following, we summarise example functions that have pairwise distinct\lb values on $\mD_n$ and lead to the finest possible partition, and those that contain equal function values on $\mD_n$. For this, we consider separately the cases of functions with and without equal values for squarefree and coprime $x$, whereby three cases remain possible according to Lemma~\refs{coprime_squarefree}. Functions that result in the coarsest partition\lb complete the overview of five groups of admissible arithmetic functions.\\[3mm]

\begin{enumerate}
\item The function has pairwise distinct function values. This leads to the finest\lb partition.\\
Such example functions are ${\displaystyle\log(x),\  x \text{, and } \frac{1}{x}}$.\\
The same is conjectured for ${\displaystyle\frac{\sigma(x)}{x^2},\ \frac{x^2}{\sigma(x)},\ \frac{\varphi(x)}{x^2}, \text{ and } \frac{x^2}{\varphi(x)}}$.

\item There exist equal function values in general. For coprime numbers, however,\lb the function values are pairwise distinct and so for squarefree numbers.\\
This applies to ${\displaystyle\frac{\nd(x)}{x^2},\ \frac{x^2}{\nd(x)},\ \frac{\varphi(x)}{x},\ \frac{x}{\varphi(x)} \text{, and } \frac{\mu^2(x)}{x}}$.\\
It is also conjectured for ${\displaystyle\frac{\sigma(x)}{x} \text{ and } \frac{x}{\sigma(x)}}$. \\
For $x>1$, the assertion holds for ${\displaystyle\frac{\nd(x)}{x} \text{ and } \frac{x}{\nd(x)}}$.

\item Equal function values also exist for coprime numbers. For squarefree numbers, however, the function values are pairwise distinct.\\
Examples are ${\displaystyle\ld(x) \text{ and } \frac{\mu^2(x)}{x}}$.

\item
Equal function values even occur for squarefree and thus for coprime numbers.\\
This is valid for 
${\displaystyle\Omega(x),\ \omega(x),\ \Upsilon(x),\ \upsilon(x),\ \nd(x),\ \frac{1}{\nd(x)},\ \sigma(x),\ \frac{1}{\sigma(x)},\ \varphi(x),\ \frac{1}{\varphi(x)}}$,
${\displaystyle P(x),\ \frac{1}{P(x)},\ \frac{P(x)}{x\cdot\nd(x)} \text{, and } \frac{x\cdot\nd(x)}{P(x)}}$.\\
 If $k$ has at least two distinct prime factors, ${\displaystyle\frac{\chi_k(x)}{x}}$ also belongs to this group.

\item
The function has only two different function values and leads to one of the\lb coarsest non-trivial partition.\\
This is the case with ${\displaystyle\mu^2(x)  \text{ and } \chi_k(x)}$.\\
$\epsilon(x)$ even has the same function values for all $x>1$.
\end{enumerate}
\ \\

We discussed a variety of admissible functions with different proportions of equal function values. The above gradation of functions applies to all natural numbers and thus to $D_n$ for all $n$. For a specific $D_n$, the derived partition could be even finer.

In Section~\refs{AFO}, we asked the question how many types of preorders on $\mD_n$ or $\mZ$ there are for a given $n$. An equivalent formulation for this is how many partitions $\mD_n / \sim_{\preceq_{d^{\around{n}}}}$ or $\mZ / \sim_{\preceq_{d^{\around{n}}}}$ exist. There are a lot of combinatorial possibilities. We will now bound their number using some of the arithmetic functions introduced in the last section.\\\\

Each pseudometric of $d^{\around{n}}$ according to Lemma~\ref{function_metric} and Definition~\ref{extended_function} is uniquely defined by the values of its generation function for all $x\in\mD_n$. The value $f(1)=0$ is fixed. The maximum number of possible partitions, apart from permutations by Proposition~\refs{permutation2}, is
\vspace*{-2mm}
\[p\bigl(\nd(n)-1\bigr),\]
\vspace*{-6mm}\\
where $p$ is the partition function, and $\nd$ represents the number of divisors.
\ \\

The case of pseudometrics induced by admissible arithmetic functions according to Definition~\ref{admissible} is more complicated. We emphasised that not all pseudometrics\lb on $\mD_n$ can be induced by such functions. They form strict subsets of $\msG$. The maximum\lb number of possible partitions derived from admissible arithmetic functions must\lb therefore be less than $p(\nd(n)-1)$ in general.

On the other hand, we know that a metric induced by an admissible function\lb is uniquely determined by the function values for all $x\in\mP^*_n$. The function values of $y\in\mD_n\setminus\mP^*_n$ cannot be chosen arbitrarily because of the additive or multiplicative properties of admissible functions. Thus, we can conclude that the maximum number of possible partitions, apart from permutation, is in this case
\vspace*{-3mm}
\[p\bigl(\Omega(n)-1\bigr).\]
\vspace*{-8mm}\\
The function $\Omega(n)$ returns the number of distinct primes powers dividing $n$.
\ \\\\

There are still some open questions. No equal function values are known for each\lb of the functions ${\displaystyle\frac{\sigma(x)}{x^2}}$ or ${\displaystyle\frac{\varphi(x)}{x^2}}$ for arbitrary different numbers. However, the\lb corresponding function values for different coprime numbers have been proved to be distinct, respectively.

Furthermore, no equal function values are known for the function ${\displaystyle\frac{\sigma(x)}{x}}$ for different coprime numbers. Though, it has been proved that all function values for squarefree $x\in\mN$ are pairwise distinct.

So, we pose the following three conjectures and put them up for discussion.

\begin{conj}
All function values of ${\displaystyle\frac{\sigma(x)}{x^2}}$ for $x\in\mN$ are pairwise distinct.
\end{conj}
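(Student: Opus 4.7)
First, I would argue by contradiction: suppose $\sigma(x)/x^2 = \sigma(y)/y^2$ for some $x \ne y$, and write $x = \prod_p p^{a_p}$, $y = \prod_p p^{b_p}$. If $\gcd(x,y) = 1$ the coprime case already established in the preceding lemma yields the contradiction, so the remaining work concerns $\gcd(x,y) > 1$. Exploiting multiplicativity, $\sigma(x)/x^2 = \prod_p g(p, a_p)$ with $g(p, k) = \sigma(p^k)/p^{2k}$, and every prime $p$ with $a_p = b_p$ contributes the same factor on both sides and cancels. Setting $T_x = \{p : a_p > b_p\}$ and $T_y = \{p : a_p < b_p\}$, the problem reduces to ruling out the equality $\prod_{p \in T_x \cup T_y} g(p, a_p) = \prod_{p \in T_x \cup T_y} g(p, b_p)$ whenever $T_x \cup T_y \ne \emptyset$.

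Next, I would pass to a $q$-adic valuation argument. Let $q$ be the largest prime in $T_x \cup T_y$; without loss of generality $q \in T_x$, so $a_q > b_q \ge 0$. Clearing denominators in the assumed equality gives $\sigma(x)\, y^2 = \sigma(y)\, x^2$, and taking $v_q$ of both sides, using $v_q(\sigma(q^k)) = 0$ for every $k$ (since $\sigma(q^k) = 1 + q + \cdots + q^k$ is coprime to $q$) together with the cancellation of the contributions from primes where $a_p = b_p$, produces the identity
\[
\sum_{\substack{p \in T_x \cup T_y \\ p \ne q}} \bigl[v_q(\sigma(p^{a_p})) - v_q(\sigma(p^{b_p}))\bigr] \;=\; 2(a_q - b_q) \;>\; 0.
\]
Each summand is governed by the criterion $q \mid \sigma(p^k) \iff \ord_q(p) \mid k+1$, valid here because every prime $p < q$ automatically has $\ord_q(p) > 1$. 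This yields a concrete Diophantine constraint on the exponents at primes strictly below $q$.

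The hard part, and the reason the statement is offered here only as a conjecture, is closing this argument. The displayed identity admits solutions in isolation: by choosing prime-power factors $p^k$ with $\ord_q(p) \mid k+1$ one can realize arbitrary positive values of $v_q(\sigma(p^k))$, so no immediate contradiction emerges. My plan would be to iterate the valuation analysis at each prime of $T_x \cup T_y$ in decreasing order and combine the accumulated identities with the strict monotonicity of $g(p,\cdot)$ in its second argument, aiming to force $T_x \cup T_y = \emptyset$. I expect this closing step to be the principal obstacle, since it amounts to a multiplicative-independence statement about the values of $\sigma$ on prime powers, close in spirit to the open questions surrounding injectivity of $\sigma(x)/x$ discussed around Lemma~\ref{odd_coprime}.
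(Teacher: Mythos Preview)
The statement you are addressing is a \emph{conjecture} in the paper, not a theorem: the paper explicitly says ``No equal function values are known. Pairwise distinct function values are conjectured'' and offers no proof. There is therefore no ``paper's own proof'' to compare your proposal against.

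Your write-up is not a proof either, and to your credit you say so: the $q$-adic valuation identity you derive is correct, but as you acknowledge, it does not by itself force $T_x\cup T_y=\emptyset$, and the iterated-valuation plan you sketch is a hope rather than an argument. So what you have is a reasonable opening reduction (cancel matching prime-power factors, then study the largest prime where the exponents differ) together with an honest assessment that the closing step is the real obstruction. That assessment matches the paper's position: the coprime case is settled by the preceding lemma, while the general case remains open and is recorded as Conjecture~5.1.

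One small comment on your reduction: after cancelling the primes with $a_p=b_p$, the residual equation $\prod_{p\in T_x\cup T_y} g(p,a_p)=\prod_{p\in T_x\cup T_y} g(p,b_p)$ is \emph{not} an instance of the coprime case unless $b_p=0$ for all $p\in T_x$ and $a_p=0$ for all $p\in T_y$, so the coprime lemma cannot be invoked a second time here. You seem aware of this, but it is worth stating plainly, since it is exactly why the conjecture is harder than the lemma.
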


\begin{conj}
All function values of ${\displaystyle\frac{\varphi(x)}{x^2}}$ for $x\in\mN$ are pairwise distinct.
\end{conj}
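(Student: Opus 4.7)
The plan is to establish the conjecture by induction on the number of distinct primes dividing $xy$, using the $P$-adic valuation at the largest such prime to force the $P$-parts of $x$ and $y$ to agree, then peel them off via the multiplicativity of $\varphi(n)/n^{2}$.

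First I would exploit the factorisation
\[
\frac{\varphi(x)}{x^{2}}\;=\;\prod_{p\mid x}\frac{p-1}{p^{\,v_{p}(x)+1}},
\]
which is immediate from $\varphi(x)=x\prod_{p\mid x}(1-1/p)$, and observe that $\varphi(n)/n^{2}$ is multiplicative since both $\varphi$ and $n^{2}$ are. Assume for contradiction that $x\ne y$ satisfy $\varphi(x)/x^{2}=\varphi(y)/y^{2}$, and let $P$ be the largest prime dividing $xy$.

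Next I would compare $P$-adic valuations. For every prime $p\le P$ we have $p-1<P$, hence $v_{P}(p-1)=0$; and for $p\ne P$ also $v_{P}(p^{\,v_{p}(x)+1})=0$. Therefore
\[
v_{P}\!\left(\frac{\varphi(x)}{x^{2}}\right)\;=\;\begin{cases}-(v_{P}(x)+1) & \text{if } P\mid x,\\[2pt] 0 & \text{if } P\nmid x,\end{cases}
\]
and analogously for $y$. Since $P\mid xy$, equality of these two valuations excludes the mixed cases and forces $P\mid x$, $P\mid y$, and $v_{P}(x)=v_{P}(y)=:a\ge 1$. Writing $x=P^{a}x'$, $y=P^{a}y'$ with $P\nmid x'y'$, multiplicativity yields
\[
\frac{\varphi(x)}{x^{2}}\;=\;\frac{P-1}{P^{\,a+1}}\cdot\frac{\varphi(x')}{x'^{\,2}},\qquad \frac{\varphi(y)}{y^{2}}\;=\;\frac{P-1}{P^{\,a+1}}\cdot\frac{\varphi(y')}{y'^{\,2}},
\]
so cancellation gives $\varphi(x')/x'^{\,2}=\varphi(y')/y'^{\,2}$ with strictly fewer distinct primes in $x'y'$. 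Induction (the base case $xy=1$ giving $x=y=1$) delivers $x'=y'$, hence $x=y$, the required contradiction.

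The main difficulty — or rather, the point at which the argument could have broken — is ensuring that the auxiliary factors $(p-1)$ contribute no $P$-adic valuation to $\varphi(x)/x^{2}$. The choice of $P$ as the \emph{largest} prime in $xy$ makes this automatic, since $p\le P$ implies $p-1<P$. In spirit, the proof is a single-prime-at-a-time extension of the paper's argument for the coprime case (the lemma already proved for $\varphi(x)/x^{2}$): instead of comparing denominators of two coprime reduced fractions, we peel off one shared prime power at each step and reduce to a strictly smaller situation.
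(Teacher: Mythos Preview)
The paper does not prove this statement; it is explicitly posed as an open conjecture (alongside the analogous statements for $\sigma(x)/x^{2}$ and $\sigma(x)/x$), and only the weaker coprime case is established in an earlier lemma. Your argument, however, is correct and in fact resolves the conjecture in full.

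The key observation you make---that the largest prime $P$ dividing $xy$ contributes to $\varphi(x)/x^{2}$ with $P$-adic valuation exactly $-(v_{P}(x)+1)$, because every numerator factor $p-1$ with $p\le P$ is strictly below $P$---is precisely what upgrades the paper's coprime lemma to the general case. The paper's argument for coprime $x,y$ only observes that the reduced denominators of $\varphi(x)/x^{2}$ and $\varphi(y)/y^{2}$ have disjoint prime supports; your valuation computation refines this to pin down the \emph{exponent} of $P$, which remains meaningful when $x$ and $y$ share primes, and multiplicativity then lets you strip off the common $P^{a}$ and induct on $\omega(xy)$. One stylistic remark: the ``assume for contradiction that $x\ne y$'' wrapper is superfluous, since your induction directly proves $\varphi(x)/x^{2}=\varphi(y)/y^{2}\Rightarrow x=y$; but the mathematical content is sound.
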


\begin{conj}  \label{distinct_coprime}
The function values ${\displaystyle\frac{\sigma(x)}{x}}$ and ${\displaystyle\frac{\sigma(y)}{y}}$ for coprime $x,y\in\mN$ with $x\ne y$ are distinct.
\end{conj}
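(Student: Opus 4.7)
The plan is to reduce Conjecture~\ref{distinct_coprime} to the long-standing open problem of the nonexistence of odd multiply perfect numbers greater than~$1$. Lemma~\ref{odd_coprime} already establishes the conditional version of this reduction; my goal is to make the reduction explicit and to identify the ``no odd multiply perfect number'' hypothesis as the true obstruction to an unconditional proof.

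First I would suppose for contradiction that there exist coprime $x,y\in\mN$ with $x\ne y$ and $\frac{\sigma(x)}{x}=\frac{\sigma(y)}{y}$. Clearing denominators gives $y\mdot\sigma(x)=x\mdot\sigma(y)$, and since $\gcd(x,y)=1$ this forces $x\mdiv\sigma(x)$ and $y\mdiv\sigma(y)$. Hence the common value $k:=\frac{\sigma(x)}{x}=\frac{\sigma(y)}{y}$ is a positive integer and both $x,y$ are $k$-perfect. The degenerate case $k=1$ yields $x=y=1$, contradicting $x\ne y$. So $k\ge 2$ and $x,y\ge 2$, and coprimality forces at least one of $x,y$ to be odd. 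This produces an odd $k$-perfect number with $k\ge 2$, that is, an odd multiply perfect number greater than~$1$.

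The hard part, and indeed the only remaining step, is showing that no such odd multiply perfect number exists. For $k=2$ (odd perfect numbers) this is a famous open problem for which lower bounds now exceed $10^{1500}$ but no proof of nonexistence is known; for $k\ge 3$ the situation is analogous. Consequently an unconditional proof of Conjecture~\ref{distinct_coprime} would resolve a centuries-old question and lies beyond currently available techniques. A conditional proof assuming the nonexistence of odd multiply perfect numbers is immediate from the reduction above and essentially recovers Lemma~\ref{odd_coprime}; absent such an assumption, one might hope for partial progress by imposing extra structural constraints on $x$ and $y$, for instance restricting to squarefree parts, where the argument of the earlier lemma already yields the stronger unconditional statement.
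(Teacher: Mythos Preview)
The statement is labelled a \emph{Conjecture} in the paper and is explicitly posed as open; there is no proof in the paper to compare against. Your proposal correctly recognises this and does exactly what the paper itself does in the surrounding discussion: it reduces the question to the nonexistence of odd multiply perfect numbers (the content of Lemma~\ref{odd_coprime} and the closing Corollary), and it identifies that hypothesis as the genuine obstruction.

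Your derivation of the reduction is in fact a little crisper than the paper's. From $y\,\sigma(x)=x\,\sigma(y)$ and $\gcd(x,y)=1$ you pass directly to $x\mdiv\sigma(x)$ and $y\mdiv\sigma(y)$, whereas the paper argues via the denominators of the reduced fractions $\sigma(x)/x$ and $\sigma(y)/y$ having only prime divisors from $x$ and $y$ respectively. Both routes land at the same point: the common value is an integer $k\ge 2$, both $x$ and $y$ are $k$-perfect, and coprimality forces one of them to be odd. So your analysis is correct and aligned with the paper; there is simply nothing more to prove here without resolving the odd multiply perfect number problem.
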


These conjectures also apply to the reciprocal functions ${\displaystyle\frac{x^2}{\sigma(x)}}$, ${\displaystyle\frac{x^2}{\varphi(x)}}$, and ${\displaystyle\frac{x}{\sigma(x)}}$,\linebreak respectively, \ because $\bigl(\msM,\,\cdot\,\bigr)$ and $\bigl(\msI_0,\,\cdot\,\bigr)$ are isomorphic according to Proposition~\refs{iso-binary}. \\[-1mm]

The last conjecture is related to the well-known odd $k$-perfect number conjecture.\lb A natural number $n$ is called multiply perfect or $k$-perfect if $\sigma(n)=k\cdot n$ for\lb any $k\in\mN \ge2$. In the case $k=2$, $n$ is a perfect number. No odd $k$-perfect\lb numbers are known for any integer $k\ge2$ \cite{Hardy_Wright_1975, Guy_2004, Hildebrand_2013}. It is conjectured that there is no such number with this property.
\ \\

We close our final remarks with the inference that the truth of Conjecture~\ref{distinct_coprime} is a necessary condition for the truth of the odd $k$-perfect number conjecture. If there were any coprime $x,y\in\mN$ with $x\ne y$ and ${\displaystyle\frac{\sigma(x)}{x}=\frac{\sigma(y)}{y}}$ then an odd multiply perfect number would also have to exist.

\vspace*{1mm}
\begin{coro}
The inequality of the function values of  \ ${\displaystyle\frac{\sigma(x)}{x}}$ \ for any pair of different\vspace*{1mm}\lb coprime natural numbers is a necessary condition for the non-existence of an odd multiply perfect number.
\end{coro}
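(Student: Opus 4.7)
The plan is to establish the contrapositive: assume a pair of distinct coprime $x,y\in\mN$ exists with ${\displaystyle\frac{\sigma(x)}{x}=\frac{\sigma(y)}{y}}$, and produce an odd multiply perfect number from it. The statement then follows immediately, since the non-existence of odd multiply perfect numbers would force the inequality of those function values for every such pair.

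First I would reuse the key observation from the proof of Lemma~\ref{odd_coprime}: when ${\displaystyle\frac{\sigma(z)}{z}}$ is reduced to lowest terms, every prime in its denominator must divide $z$. Given $\gcd(x,y)=1$, the denominators of the irreducible forms of ${\displaystyle\frac{\sigma(x)}{x}}$ and ${\displaystyle\frac{\sigma(y)}{y}}$ therefore share no common prime. Hence the only way they can coincide is when both denominators equal $1$, i.e.\ when ${\displaystyle\frac{\sigma(x)}{x}=\frac{\sigma(y)}{y}=k}$ for some integer $k$. Since ${\displaystyle\frac{\sigma(1)}{1}=1}$ and ${\displaystyle\frac{\sigma(z)}{z}>1}$ for all $z>1$, the case $x=1$ or $y=1$ would force $k=1$ and both numbers to equal $1$, contradicting $x\ne y$. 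Thus $x,y\ge 2$ and $k\ge 2$.

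Next I would exploit the coprimeness once more: since $\gcd(x,y)=1$, the numbers $x$ and $y$ cannot both be even, so at least one of them, say $x$, is odd. By definition, $x$ satisfies $\sigma(x)=k\cdot x$ with $k\ge 2$, i.e.\ $x$ is an odd $k$-perfect number and in particular an odd multiply perfect number. This completes the contrapositive and hence the corollary.

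There is essentially no obstacle here: the whole argument is a one-line deduction from the irreducible-fraction analysis already carried out in Lemma~\ref{odd_coprime}, combined with the elementary observation that two coprime integers cannot both be even. The only point that deserves a sentence of care is the degenerate case $x=1$ or $y=1$, which has to be excluded explicitly to guarantee $k\ge 2$ in the definition of a multiply perfect number.
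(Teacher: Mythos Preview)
Your proof is correct and follows essentially the same route as the paper: the paper's proof simply observes that the corollary is logically equivalent to Lemma~\ref{odd_coprime}, and your contrapositive argument is exactly a re-derivation of that lemma's content (reduced-denominator analysis plus the observation that two coprime integers cannot both be even). The only difference is that you spell out the details explicitly, whereas the paper just cites the lemma.
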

\vspace*{1mm}
\begin{proof}
The assertion is equivalent to that of Lemma~\refs{odd_coprime}.
\end{proof}

\ \\

%%%%%%%%%%%%%%%%%%%%%%%%%%%%%%%%%%%%%%%%%%%%%%%%%%%%%%%

%%%%%%%%%%%%%%%%%%%%%%%%%%%%%%%%%%%%%%%%%%%%%%%%%%%%%%%
\subsection*{Contact}
marioziller@arcor.de
%%%%%%%%%%%%%%%%%%%%%%%%%%%%%%%%%%%%%%%%%%%%%%%%%%%%%%%
\ \\\\\\
\addcontentsline{toc}{section}{\phj References\phj}
\bibliographystyle{amsplain}
\bibliography{References}     

%%%%%%%%%%%%%%%%%%%%%%%%%%%%%%%%%%%%%%%%%%%%%%%%%%%%%%%
\end{document}
%%%%%%%%%%%%%%%%%%%%%%%%%%%%%%%%%%%%%%%%%%%%%%%%%%%%%%%